\newcommand{\bb}[1]{\mathbb{#1}}
\newcommand{\n}[1]{\mathcal{#1}}
\newcommand{\bbf}[1]{\mathbf{#1}}
\newcommand{\f}[1]{\mathfrak{#1}}
\newcommand{\s}[1]{\mathscr{#1}}
\newcommand{\Hom}{\mbox{\normalfont Hom}}
\DeclareMathOperator{\ES}{ES}
\DeclareMathOperator{\GL}{GL}
\DeclareMathOperator{\Gal}{Gal}
\DeclareMathOperator{\FL}{FL}
\DeclareMathOperator{\Fl}{\s{F}\!\ell}
\DeclareMathOperator{\Fil}{Fil}
\DeclareMathOperator{\gr}{gr}
\DeclareMathOperator{\HT}{HT}
\DeclareMathOperator{\Spa}{Spa}
\DeclareMathOperator{\Spec}{Spec}
\DeclareMathOperator{\Spf}{Spf}
\DeclareMathOperator{\Sym}{Sym}
\DeclareMathOperator{\St}{St}
\DeclareMathOperator{\BGG}{BGG}
\DeclareMathOperator{\dlog}{dlog}
\DeclareMathOperator{\IW}{\n{I}\!w}
\DeclareMathOperator{\Iw}{Iw}
\DeclareMathOperator{\ev}{ev}
\DeclareMathOperator{\DR}{DR}
\DeclareMathOperator{\Cor}{Cor}
\DeclareMathOperator{\Res}{Res}
\DeclareMathOperator{\GLa}{\n{G}\!\n{L}}
\DeclareMathOperator{\Isom}{Isom}
\DeclareMathOperator{\diag}{diag}
\DeclareMathOperator{\pr}{pr}
\DeclareMathOperator{\Lie}{Lie}
\DeclareMathOperator{\Tr}{Tr}
\DeclareMathOperator{\id}{id}
\DeclareMathOperator{\hocolim}{hocolim}
\DeclareMathOperator{\an}{\begin{scriptsize}
an
\end{scriptsize}}
\DeclareMathOperator{\et}{\begin{scriptsize}
\acute{e}t
\end{scriptsize}}
\DeclareMathOperator{\proet}{\begin{scriptsize}
pro\acute{e}t
\end{scriptsize}}
\DeclareMathOperator{\ket}{\begin{scriptsize}
k\acute{e}t
\end{scriptsize}}
\DeclareMathOperator{\proket}{\begin{scriptsize}
prok\acute{e}t
\end{scriptsize}}
\DeclareMathOperator{\alg}{\begin{scriptsize}
alg
\end{scriptsize}}
\DeclareMathOperator{\dR}{\begin{scriptsize}
dR
\end{scriptsize}}
\DeclareMathOperator{\cyc}{\begin{scriptsize}
cyc
\end{scriptsize}}
\DeclareMathOperator{\sm}{\begin{scriptsize}
sm
\end{scriptsize}}
\DeclareMathOperator{\mo}{\begin{scriptsize}
mod
\end{scriptsize}}
\DeclareMathOperator{\ord}{\begin{scriptsize}
ord
\end{scriptsize}}
\DeclareMathOperator{\can}{\begin{scriptsize}
can
\end{scriptsize}}
\DeclareMathOperator{\naive}{\begin{scriptsize}
naive
\end{scriptsize}}
\DeclareMathOperator{\un}{\begin{scriptsize}
un
\end{scriptsize}}
\newcommand{\OBdr}{\s{O}\!\bb{B}_{\dR,\log}}
\newcommand{\OC}{\s{O}\!\bb{C}_{\log}}
\newtheorem{theo}{Theorem}[subsection]
\newtheorem{prop}[theo]{Proposition}
\newtheorem{lem}[theo]{Lemma}
\newtheorem{cor}[theo]{Corollary}
\theoremstyle{definition}
\newtheorem{definition}[theo]{Definition}
\newtheorem{convention}[theo]{Convention}
\theoremstyle{remark}
\newtheorem{remark}[theo]{Remark}
\begin{document}

\title{$p$-adic Eichler-Shimura maps for the modular curve}

\author{Juan Esteban Rodr\'iguez Camargo}
\email{rodriguez@mpim-bonn.mpg.de}
\address{Max Planck Institute for Mathematics, Vivatsgasse 7, 53111 Bonn-Germany}
\classification{11F77
 (primary), 11G18
 , 14G35
 (secondary)}
\keywords{Eichler-Shimura maps, $p$-adic modular symbols, modular curves, $p$-adic Hodge theory.}

\begin{abstract}
We give a new proof of Faltings's $p$-adic Eichler-Shimura decomposition  of the modular curves via BGG methods and the Hodge-Tate period map.  The key property   is the relation between the Tate module and the Faltings extension,  which was  already used in the original proof.   Then,  we construct overconvergent  Eichler-Shimura (ES) maps   for the modular curves   providing  ``the second half'' of the overconvergent ES map of Andreatta-Iovita-Stevens.   We use higher Coleman  theory  on  the modular curve developed by Boxer-Pilloni to show that the small slope part of the ES maps interpolates the classical $p$-adic Eichler-Shimura decompositions.   Finally,  we prove that the overconvergent ES maps are compatible with Poincar\'e and Serre pairings. 
\end{abstract}

\maketitle

\section{Introduction}

Let $p$ be a prime number, $\bb{A}_{\bb{Q}}^{\infty}$  the finite ad\`eles of  $\bb{Q}$,   $\bb{A}^{\infty,p}_{\bb{Q}}$ the finite  prime-to-$p$ ad\`eles,  and $\bb{Z}_p$ the ring of $p$-adic integers.  Let $\bb{C}_p$ be the $p$-adic completion of an algebraic closure of $\bb{Q}_p$, and $G_{\bb{Q}_p}=\Gal(\bb{C}_p/\bb{Q}_p)$  the absolute Galois group.  From now on, we fix a neat compact open subgroup $K^p\subset \GL_2(\bb{A}^{\infty,p}_{\bb{Q}})$.   Given  $K_p\subset \GL_2(\bb{Q}_p)$  an open compact subgroup, we denote by  $Y_{K_p}^{\alg}$ the modular curve over $\Spec \bb{Q}_p$ of level $K^pK_p\subset \GL_2(\bb{A}^{\infty}_{\bb{Q}})=\GL_2(\bb{A}^{\infty,p}_{\bb{Q}})\times \GL_2(\bb{Q}_p)$,  and by $X_{K_p}^{\alg}$ its compactification by adding cusps.   Let $Y_{K_p}$ and $X_{K_p}$ be the rigid analytic varieties attached to the modular curves, seen as adic spaces over $\Spa(\bb{Q}_p, \bb{Z}_p)$, cf.  \cite{HuberEtaleCohomology}. Let $D= X_{K_p}\backslash Y_{K_p}$ be the cusp divisor, we endow $X_{K_p}$ with the $\log$-structure defined by $D$.  \\

Given an fs $\log$ adic space  $Z$ and $?\in \{\an,\et,\ket,\proet,\proket\}$,   we denote by $Z_{?}$ its analytic,  \'etale,  Kummer-\'etale,  pro\'etale and pro-Kummer-\'etale sites respectively,  see  \cite{ScholzeHodgeTheory2013} and \cite{DiaoLogarithmic2019}.   \\

In \cite{FaltingsHodgeModular1987},  Faltings described  the Hodge-Tate decomposition  of the  \'etale cohomology (with coefficients) of the modular curve $Y_{K_p }$.    More precisely,  let $E$ be the universal elliptic curve over $Y_{K_p}$,  it admits an  extension to a semi-abelian  adic space $E^{sm}$ over $X_{K_p}$ (cf. \cite{DeligneRappLesSchemasCourbes}).  Let $e:  X_{K_p}\rightarrow  E^{sm}$ be the unit section,   $\omega_E= e^* \Omega^1_{ E^{sm}/X}$ the modular sheaf and $T_p E =\varprojlim_{n} E[p^n]$  the Tate module over $Y_{K_p}$.   We have the following theorem
\begin{theo}[Faltings]
\label{TheoIntro1}
Let  $k\geq 0$,  there exists a Galois and Hecke equivariant isomorphism 
\begin{equation}
\label{eqESDecompositionIntro}
H^1_{\et}(Y_{K_p,\bb{C}_p},  \Sym^k T_p E ) \otimes_{\bb{Q}_p}\bb{C}_p(1)= H^0_{\an}(X_{K_p,\bb{C}_p},  \omega_E^{k+2})\oplus H^1_{\an}(X_{K_p,\bb{C}_p},  \omega_E^{-k})(k+1)
\end{equation}
called the Eichler-Shimura (ES) decomposition. 
\end{theo}  
The first result of this paper is a new proof of Faltings's  Eichler-Shimura   decomposition using BGG methods and the Hodge-Tate period map.  Our proof is the pro\'etale  analogue  of the BGG decomposition for  the de Rham cohomology of Faltings-Chai    \cite[Ch. 5 Theorem  5.5]{faltings1990degeneration}.  Let us develop  the ideas behind:   \\

 Let $X_{\infty}:= \varprojlim_{K_p} X_{K_p} $  be   Scholze's   perfectoid modular curve and   $\pi_{\HT}:  X_{\infty}\rightarrow \bb{P}^1_{\bb{Q}_p}$ the  Hodge-Tate period map   \cite{ScholzeTorsion2015}.    The morphism $\pi_{\HT}$ is $\GL_2(\bb{Q}_p)$-equivariant where we see $\bb{P}^1_{\bb{Q}_p}$ as the left quotient  of $\GL_2$ by the upper triangular Borel $\bbf{B}$.  Let $\pi_{K_p}: X_{\infty}\rightarrow X_{K_p}$ be the natural map,   we can see $X_{\infty}$ as a pro-Kummer-\'etale $K_p$-torsor over $X_{K_p}$.   We let  $\widehat{\s{O}}_{X_{K_p}}$ denote the $p$-adic completion of the structural sheaf over $X_{K_p, \proket}$.  Let $\bb{Z}_p(1)= \varprojlim_{n} \mu_{p^n}$ be the Tate twist and $\widehat{\s{O}}_{X_{K_p}}(i)$ the $i$-th twist of $\widehat{\s{O}}_{X_{K_p}}$.      By \cite[Theorem 4.6.1]{DiaoLogarithmic2019}, $T_pE$ admits a natural extension to the pro-Kummer-\'etale site of $X_{K_p}$ which we denote  in the same way.    From now on, we fix the level $K_p$ and write  $Y=Y_{K_p}$ and  $X=X_{K_p}$.  \\

 The map $\pi_{\HT}$ is defined from the Hodge-Tate exact sequence 
\begin{equation}
\label{eqHTExactIntro}
0 \rightarrow \omega_E^{-1}\otimes_{\s{O}_X} \widehat{\s{O}}_X(1) \xrightarrow{\HT^{\vee}} T_p E \otimes_{\bb{Z}_p} \widehat{\s{O}}_X \xrightarrow{\HT} \omega_E \otimes_{\s{O}_X} \widehat{\s{O}}_X \rightarrow 0,
\end{equation}
which is the variation in families of the  Hodge-Tate decomposition for elliptic curves (cf.  \cite{Tatepdivisible}). More precisely, let    $\Psi: \bb{Z}_p^2\xrightarrow{\sim} T_pE$ be the  universal trivialization over $X_{\infty}$, then \eqref{eqHTExactIntro} defines a line subbundle of $\s{O}_{X_{\infty}}^{\oplus 2}$ which induces the map $\pi_{\HT}$.   \\

The $\GL_2(\bb{Q}_p)$-equivariance of $\pi_{\HT}$  recovers  (\ref{eqHTExactIntro}) from a short exact sequence of $\GL_2$-equivariant sheaves over $\bb{P}^1_{\bb{Q}_p}$.  Indeed, the presentation $\bb{P}^1_{\bb{Q}_p}= \bbf{B} \backslash \GL_2$ induces an equivalence between algebraic $\bbf{B}$-representations and $\GL_2$-equivariant vector bundles over $\bb{P}^1_{\bb{Q}_p}$. More explicitly, let $V$ be a $\bbf{B}$-representation, then one defines the vector bundle $\n{V}= \GL_2 \times^{\bbf{B}} V = \bbf{B}\backslash (\GL_2\times V)$, where in the right  term $\bbf{B}$ acts diagonally.  Let $\s{F}$ be a $K_p$-equivariant sheaf over $\bb{P}^1_{\bb{Q}_p}$, we shall identify $\pi_{\HT}^*(\s{F})$ with the pro-Kummer-\'etale sheaf over $X_{K_p}$ obtained by descent from the $K_p$-equivariant pullback over $X_{\infty}$. \\

  Let $\bbf{T}\subset \bbf{B}$ be the diagonal torus and let $\kappa=(k_1,k_2) \in  X^*(\bbf{T})$  be a character, we denote by  $\bb{Q}_p(\kappa)$ the representation defined by $\kappa$ and consider it  as a $\bbf{B}$-representation by letting the unipotent radical act trivially. We say that $\kappa=(k_1,k_2)$ is dominant if $k_1\geq k_2$. Given $\kappa$ a dominant character we let $V_{\kappa}= \Sym^{k_1-k_2}\St\otimes \det^{k_2}$ be the irreducible representation of $\GL_2$ of highest weight $\kappa$, we denote by $V_{\kappa,\et}$ the associated pro-Kummer-\'etale sheaf obtained by descent from $X_{\infty}$.     Let $W=\{1,w_0\}$ be the Weyl group of $\GL_2$.  We denote by $\s{L}(\kappa)$ the $\GL_2$-equivariant sheaf over $\bb{P}^1_{\bb{Q}_p}$  given by $\bbf{B}\backslash( \GL_2\times w_0(\kappa))$.   The  standard representation   $\St$ has a  $\bbf{B}$-filtration 
\begin{equation}
\label{eqHTintro}
0\rightarrow \bb{Q}_p(1,0)  \rightarrow  \St \rightarrow \bb{Q}_p(0,1) \rightarrow 0.
\end{equation}
By construction, the pullback of (\ref{eqHTintro}) by $\pi_{\HT}$   is equal to the Hodge-Tate-exact sequence \eqref{eqHTExactIntro}.  In particular, $\pi_{\HT}^*(  \St \otimes \s{O}_{\bb{P}^1_{\bb{Q}_p}})=  T_p E \otimes \widehat{\s{O}}_X$ and  $\pi_{\HT}( \s{L}(\kappa))= \omega_E^{k_1-k_2}\otimes_{\s{O}_X} \widehat{\s{O}}_X(k_2)$. A natural question arises, namely, to describe  the pullbacks by $\pi_{\HT}$ of the  $\GL_2$-equivariant vector bundles over $\bb{P}^1_{\bb{Q}_p}$. We already know that the pullback of those vector bundles constructed from characters of $\bbf{T}$ are related with modular sheaves, it is left to understand the $\GL_2$-equivariant sheaves constructed from non-semisimple representations of $\bbf{B}$.    \\

Let  $\s{O}(\bbf{B})$ be the ring of algebraic functions of $\bbf{B}$ endowed with the right regular action, note that any algebraic representation of $\bbf{B}$ occurs in $\s{O}(\bbf{B})$. The presentation $\bbf{B}= \bbf{T}\ltimes \bbf{N}$ as a semidirect product  gives an isomorphism  $\s{O}(\bbf{B})= \s{O}(\bbf{T})\otimes \s{O}(\bbf{N})$, where $\bbf{B}$ acts on the first factor by the  right regular action of $\bbf{T}$, and it acts on the second factor under the formula $(n,b)\mapsto t_{b}^{-1}nt_{b} n_b$, with $(n,b)\in \bbf{N}\times \bbf{B}$ and $b=(t_b,n_b)\in \bbf{T}\ltimes \bbf{N}$.   We have the following theorem:

\begin{theo}[Theorem \ref{PropFaltingsExtension}]
\label{TheoFEIntro}
Let $\underline{\s{O}}(\bbf{N})$ be the $\GL_2$-equivariant quasi-coherent sheaf over $\bb{P}^1_{\bb{Q}_p}$ associated to $\s{O}(\bbf{N})$. Let $\OBdr$ be the geometric de Rham period sheaf of \cite{ScholzeHodgeTheory2013} and \cite{DiaoLogarithmic2019}, and let $\OC= \gr^0 \OBdr$ be the Hodge-Tate sheaf.   We have a natural isomorphism of pro-Kummer-\'etale sheaves over $X$
\[
\pi^{*}_{\HT}(\underline{\s{O}}(\bbf{N})) = \OC. 
\]
\end{theo}

As a corollary one obtains the Eichler-Shimura decompositions

\begin{theo}[Theroem \ref{TheoBGGproEtale}]
\label{TheoBGGIntro} 
Let $\kappa=(k_1,k_2)$ be a dominant character and  set $\alpha= (1,-1)\in X^*(\bbf{T})$.   Let  $\BGG(\kappa)$ be the dual BGG complex of weight $\kappa$ (Proposition \ref{PropBGGclassic})
\[
0 \rightarrow V_{\kappa} \rightarrow V(\kappa) \rightarrow V(w_0(\kappa)-\alpha)\rightarrow 0.
\]
We denote by  $\underline{\BGG(\kappa)}$ the associated $\GL_2$-equivariant complex over $\bb{P}^1_{\bb{Q}_p}$. Then,  the pullback of $\underline{\BGG(\kappa)}$ by $\pi_{\HT}$ is the  short exact sequence 
\[
0 \rightarrow \Sym^{k_1-k_2} T_pE  \otimes \widehat{\s{O}}_X(k_2) \rightarrow \omega_E^{k_2-k_1}\otimes \OC(k_1) \rightarrow \omega_E^{k_1-k_2+2}\otimes \OC(k_2-1)\rightarrow 0.
\]
  Furthermore,  let $\lambda:  X_{\bb{C}_p,\proket}\rightarrow X_{ \bb{C}_p,\an}$ be the projection of sites.  Then 
\[
R\lambda_{*} (\Sym^k T_pE \otimes \widehat{\s{O}}_X(1))= \omega_E^{-k}\otimes \bb{C}_p(k+1)[0] \oplus \omega_E^{k+2} [-1].
\]
Taking $H^1$-cohomology in the analytic site of $X_{\bb{C}_p}$ we recover the Eichler-Shimura decomposition of Theorem \ref{TheoIntro1}. 
\end{theo}

The proof of Theorem \ref{TheoFEIntro} follows from the isomorphism between  Faltings extension $\gr^1 \OBdr^+$ and the sheaf $T_pE \otimes \widehat{\s{O}}_X \otimes \omega_E$.    This  isomorphism  was already known  by Faltings,  and used in his original  proof of Theorem \ref{TheoIntro1}.   Our new proof provides a more explicit definition of the Eichler-Shimura maps in terms of cocycles and can be  generalized to \textit{any} Shimura variety.   Moreover, using this method one easily deduces  the degeneration of the spectral sequence appearing in \cite{FaltingsHodgeModular1987},   as well as its natural  splitting using simple properties of the dual BGG resolution.    It is worth to mention that the isomorphism between the twist of the Tate module and the Faltings extension was used by Lue Pan in \cite{pan2020locally} to compute the relative Sen operator of the modular curve.  \\

The second goal of this paper is the interpolation of the Eichler-Shimura decomposition (\ref{eqESDecompositionIntro}).   The $H^0$-map  of the overconvergent  Eichler-Shimura maps was previously constructed by Andreatta-Iovita-Stevens in \cite{AISOvShimura2015}.  The strategy followed in this document is close to the construction of  the Eichler-Shimura map for Shimura curves in \cite{ChojeckiOvShimuraCurves2017}.   Roughly speaking, we take pullbacks by $\pi_{\HT}$  of  certain  locally analytic sheaves over $\bb{P}^1_{\bb{Q}_p}$.   In this way,  we  interpolate all the terms appearing in the Hodge-Tate exact sequence (\ref{eqHTExactIntro}):    we get overconvergent modular sheaves whose cohomology are the object of study in  higher Coleman theory developed by Boxer-Pilloni \cite{BoxerPilloniHigher2020, BoxerPilloniHigherColeman2020}.  The interpolation of the symmetric powers of the Tate module will be given by locally analytic principal series or locally analytic distributions as in \cite{AshStevensDeformations2008}. Finally,  the Hodge-Tate maps $\HT$ and $\HT^\vee$ can be put in families,  obtaining the $\dlog$ map of \cite{AISOvShimura2015} as a particular case.  \\

Let us sketch the main steps of the construction.   Let $n\geq 1$ be an integer and  let
\[
\Iw_n:= \left( \begin{matrix}  \bb{Z}_p^{\times} & \bb{Z}_p \\ p^n \bb{Z}_p & \bb{Z}_p^\times \end{matrix} \right)
\]
be the Iwahori group modulo $p^n$. From now on,   we will take $X=X_{\Iw_n}$.   Let $\epsilon \geq \delta \geq n$  be rational numbers and    $(R,R^+)$  a uniform Tate algebra over $\bb{Q}_p$ which we may assume to be sheafy (i.e.  such that the pre-sheaf of rational functions in $\Spa(R,R^+)$ is an actual sheaf).  Let $T=\bbf{T}(\bb{Z}_p)$ denote the $\bb{Z}_p$-points of the diagonal torus and $\chi=(\chi_1,\chi_2): T\rightarrow R^{+,\times}$ a $\delta$-analytic character (cf. Lemma  \ref{PropositionCharacterLocallyAnalytic}).      We let $R\widehat{\otimes}\widehat{\s{O}}_X$ be the $p$-adically complete tensor product of $R$ and the completed sheaf $\widehat{\s{O}}_X$.   Given a character $\lambda :  \bb{Z}_p^\times \to R^{+,\times}$ we denote by $R(\lambda)$ the $G_{\bb{Q}_p}$-module induced by the composition $G_{\bb{Q}_p}\xrightarrow{\chi_{\cyc}} \bb{Z}_p^\times \xrightarrow{\lambda} R^{\times}$.  Finally,  we write $\widehat{\s{O}}_X(\lambda):= R(\lambda)\widehat{\otimes}\widehat{\s{O}}_X$.    \\

We begin with the construction of  all the  sheaves  over $\bb{P}^1_{\bb{Q}_p}$:  for $w\in W=\{1,w_0\}$ we define a family of  overconvergent neighbourhoods $\{U_w(\epsilon )\Iw_n\}_{\epsilon \geq n}$ of $w\Iw_n$  in $\bb{P}^1_{\bb{Q}_p}$.   The affinoid spaces $U_w(\epsilon)\Iw_n$ admit  sections of the quotient map $\GL_2\rightarrow  \bb{P}^1_{\bb{Q}_p}$.  In particular,  the natural  $\bbf{T}$-torsor $\bbf{N}\backslash \GL_2\rightarrow \bb{P}^1_{\bb{Q}_p}$, where $\bbf{N}$ is the unipotent radical of $\bbf{B}$,  has a trivialization over $U_{w}(\epsilon)\Iw_n$.   We define a $R\widehat{\otimes} \s{O}_{\bb{P}^1_{\bb{Q}_p}}$ line bundle $\s{L}(\chi)$ in the analytic site of $U_w(\epsilon)\Iw_n$ in the same way we have defined the line bundles $\s{L}(\kappa)$ for $\kappa\in X^*(\bbf{T})$.     Then,  we define the space of $\delta$-analytic principal series of weight $\chi$ to be the $R$-Banach space 
\[
A^{\delta}_{\chi}= \Gamma(U_{w_0}(\delta) \Iw_n,  \s{L}(\chi)).
\]
We define the $\delta$-analytic distributions  $D^{\delta}_{\chi}$ to  be the continuous dual of $A^{\delta}_{\chi}$ endowed with the weak topology.  The space $A^{\delta}_{\chi}$ has a natural action of $\Iw_n$,  so that  it defines  a constant $\Iw_n$-equivariant  sheaf on  $\bb{P}^1_{\bb{Q}_p}$.   We let  $A^{\delta}_{\chi,\et}$ and  $D_{\chi,\et}^{\delta}$ denote the pro-Kummer-\'etale sheaves over $X$ obtained by descent from the topological $K_p$-equivariant sheaves over $X_{\infty}$. \\

In Proposition \ref{PropOVBGGmaps} we construct maps 
\begin{gather*}
 R(\chi)  \xrightarrow{\iota} A^{\delta}_{\chi} \mbox{ equivariant for the action of } \bbf{B}(\bb{Z}_p)\cap \Iw_n  \\ 
 A^{\delta}_{\chi}\xrightarrow{\ev_{w_0}} R(\chi)  \mbox{ equivariant for the action of } w_0^{-1} \bbf{B}(\bb{Z}_p)w_0 \cap \Iw_n,
\end{gather*}
with $\iota$ being the highest weight vector,  and $\ev_{w_0}$ the evaluation at $w_0$.   We prove that these maps give rise morphisms of  $\Iw_n$-equivariant sheaves 
\begin{equation}
\label{eqovBGGmapsIntro}
\begin{gathered}
\s{L}(w_0(\chi))  \rightarrow A^{\delta}_{\chi}\widehat{\otimes}_{\bb{Q}_p} \s{O}_{\bb{P}^1_{\bb{Q}_p}} \mbox{ over } U_{1}(\epsilon)\Iw_n \\ 
A^{\delta}_{\chi}\widehat{\otimes}_{\bb{Q}_p}  \s{O}_{\bb{P}^1_{\bb{Q}_p}} \rightarrow   \s{L}(\chi)   \mbox{ over } U_{w_0}(\epsilon)\Iw_n. 
\end{gathered}
\end{equation}\\

The next step is to translate all the previous constructions to the modular curve $X$.  We start by defining   the strict neighbourhoods of the $w$-ordinary locus $\{X_{w}(\epsilon)\}_{\epsilon>n}$;   they are equal to $\pi_{\Iw_n}( \pi_{\HT}^{-1}(U_w(\epsilon)\Iw_n) )$.    The second object we descend to $X$ are the overconvergent modular sheaves $\omega_E^{\chi}$;  they are  $R\widehat{\otimes} \s{O}_{X}$-line bundles in the \'etale or analytic site of $X_{w}(\epsilon)$.  We refer to \cite{BoxerPilloniHigherColeman2020} for the general construction of these sheaves.    The dictionary provided by $\pi_{\HT}$ gives then (see Corollary \ref{PropLtoOmega}) 
\begin{equation}
\label{eqPullbackovLineIntro}
\pi_{\HT}^*(\s{L}(\chi))= \omega_E^{\chi}\otimes \widehat{\otimes}  \widehat{\s{O}}_X(\chi_2).
\end{equation}\\

We continue with the pullback of the $\delta$-analytic principal series and distributions,  seen as $\Iw_n$-equivariant sheaves  over  $\bb{P}^1_{\bb{Q}_p}$. By definition one has  $\pi_{\HT}^*(A_{\chi}^{\delta}\widehat{\otimes} \s{O}_{\bb{P}^1_{\bb{Q}_p}})= A^{\delta}_{\chi,\et}\widehat{\otimes} \widehat{\s{O}}_X$ (resp. for $D^{\delta}_{\chi}$).    Finally,  we pullback the maps (\ref{eqovBGGmapsIntro}) obtaining overconvergent Hodge-Tate maps of pro-Kummer-\'etale sheaves
\begin{gather*}
\omega_E^{w_0(\chi)} \widehat{\otimes}  \widehat{\s{O}}_X(\chi_1) \xrightarrow{\HT^\vee} A^{\delta}_{\chi,\et}\widehat{\otimes} \widehat{\s{O}}_X \mbox{ over } X_{1}(\epsilon) \\ 
A^{\delta}_{\chi,\et}\widehat{\otimes}\widehat{\s{O}}_X \xrightarrow{\HT} \omega_E^{\chi} \widehat{\otimes} \widehat{\s{O}}_X(\chi_2) \mbox{ over } X_{w_0}(\epsilon),
\end{gather*}
similarly for $D^{\delta}_{\chi}$.  Taking pro-Kummer-\'etale cohomology one obtains the following:
\begin{theo}[Theorem \ref{TheoMain}]
\label{TheoOVESIntro}
There are overconvergent Eichler-Shimura maps 
\begin{equation}
\label{eqESsequenceIntro}
0\rightarrow H^1_{1,c}(X_{\bb{C}_p},  \omega_E^{w_0(\chi)})_{\epsilon}(\chi_1) \xrightarrow{ES_{A}^{\vee}}  H^1_{\proket}(X_{\bb{C}_p},  A^{\delta}_{\chi,\et}\widehat{\otimes} \widehat{\s{O}}_X) \xrightarrow{ES_{A}} H^0_{w_0}(X_{\bb{C}_p},  \omega_E^{\chi+\alpha})_{\epsilon}(\chi_2-1)\rightarrow 0
\end{equation}
satisfying the following properties: 
\begin{enumerate} 

\item The composition  $ES_{A}\circ ES^{\vee}_{A}$  is zero.

\item  Assume that $\n{V}=\Spa(R,R^+)$ is an affinoid subspace of the weight space $\n{W}_T$ of $T=\bbf{T}(\bb{Z}_p)$, and let $\kappa=(k_1,k_2)\in \n{V}$ be a dominant weight of $\bbf{T}$.  Let $\alpha=(1,-1)\in X^*(\bbf{T})$ and let  $\chi=\chi^{un}_{\n{V}}$ be the universal character  of $\n{V}$. Then there is a commutative diagram
 
\begin{center}
\begin{tikzpicture}[commutative diagrams/every diagram]
 \node (P0) at (0,2) {$H^1_{1,c}(X_{\bb{C}_p},\omega_E^{w_0(\chi)})_{\epsilon}(\chi_1) $};
 \node (P1) at (0,0) {$H^1_{1,c}(X_{\bb{C}_p},\omega_E^{w_0(\kappa)})_{\epsilon}(k_1) $};
 \node (P2) at (0,-2) {$H^1_{\an}(X_{\bb{C}_p},\omega_E^{w_0(\kappa)})(k_1) $};
 
 \node (P3) at (5,2) {$ H^1_{\proket}(X_{\bb{C}_p},  A^{\delta}_{\chi,\et} \widehat{\otimes}\widehat{\s{O}}_X)$};
 \node (P4) at (5,0) {$H^1_{\proket}(X_{\bb{C}_p},  A^{\delta}_{\kappa,\et} \widehat{\otimes}\widehat{\s{O}}_X)$};
 \node (P5) at (5,-2) {$H^1_{\proket}(X_{\bb{C}_p}, V_{\kappa,\et})\otimes \bb{C}_p $}; 
 
 \node (P6) at (10,2) {$H^0_{w_0}(X_{\bb{C}_p}, \omega_E^{\chi+\alpha})_{\epsilon}(\chi_2-1)$};
 \node (P7) at (10,0) {$H^0_{w_0}(X_{\bb{C}_p},\omega_E^{\kappa+\alpha})_{\epsilon}(k_2-1)$};
 \node (P8) at (10,-2) {$ H^0_{\an}(X_{\bb{C}_p}, \omega_E^{\kappa+\alpha})(k_2-1)  $};

\path[commutative diagrams/.cd, every arrow, every label] 
   (P0) edge node {$ES^\vee_{\n{A}}$}  (P3) 
   (P3) edge node {$ES_{\n{A}}$}  (P6)
   (P1) edge node {} (P4)
   (P4) edge node {} (P7)
   (P2) edge node {$ES^\vee$} (P5) 
   (P5) edge node {$ES$} (P8)
   
   (P0) edge node {} (P1)
   (P3) edge node {} (P4)
   (P6) edge node {} (P7)
   
   (P1) edge node {$\Cor$} (P2)
   (P5) edge node {} (P4)
   (P8) edge node [swap] {$\Res$} (P7);
 
\end{tikzpicture}
\end{center}

\item The maps of (2) are Galois and $U_p^t$-equivariant with respect to the good nomalizations of the $U_p^t$-operators.   In particular,   it restricts to the finite slope part with respect to the  $U_p^t$-action. 

\item  Let $h<k_1-k_2+1$.  There exists an open affinoid $\n{V}'\subset\n{V}$ containing $\kappa$ such that the $(\leq h)$-slope part  of  the restriction of  \eqref{eqESsequenceIntro}  to $\n{V}'$ is a short exact sequence of finite free $\bb{C}_p\widehat{\otimes}_{\bb{Q}_p} \s{O}(\n{V}')$-modules.    

\item Keep  the hypothesis of (4),  and let $\chi$ be the universal character of $\n{V}'$.  Let $\widetilde{\chi}= \chi_1-\chi_2+1 : \bb{Z}_p^\times \rightarrow R^{+,\times}$, and $b= \frac{d}{dt}|_{t=1} \widetilde{\chi}(t)$.  Then  we have a  Galois-equivariant split after inverting $b$
\[
H^1_{\proket}(X_{\bb{C}_p}, A^{\delta}_{\chi,\et}\widehat{\otimes}\widehat{\s{O}}_X)^{\leq h}_{b} =[H^1_{1,c}(X_{\bb{C}_p},  \omega_E^{w_0(\chi)})^{\leq h}_{\epsilon}(\chi_1)]_b \oplus [H^0_{w_0}(X_{\bb{C}_p},   \omega_E^{\chi+\alpha})^{\leq h}_{\epsilon}(\chi_2-1)]_b.
\] 

\end{enumerate}
\end{theo}
\begin{remark}

\begin{enumerate}

\item The group $H^0_w(X_{\bb{C}_p},  -)_{\epsilon}$ is the overconvergent cohomology   and $H^1_{w,c}(X_{\bb{C}_p},-)_{\epsilon}$  the overconvergent cohomology with compact support around the $w$-ordinary locus of $X$,  see   \cite{BoxerPilloniHigher2020} and Definition \ref{DefiSupportOvcohomologies} down below.

\item A similar statement holds for the distribution sheaves $D^{\delta}_{\chi,\et}$,  in this case the overconvergent Eichler-Shimura map of \cite{AISOvShimura2015} is $ES_{D}$.  

\item We also  prove the theorem for the pro\'etale cohomology with compact support of $A_{\chi,\et}^{\delta}$ and $D_{\chi,\et}^{\delta}$.  

\item Note that if $\kappa=(k_1,k_2)$ with $k_1+1\neq k_2$, i.e. when the Hodge-Tate weights are not equal,  one can choose $\n{V}'$ small enough such that $b\neq 0$.   
\end{enumerate}
\end{remark}

We finish  with the compatibility of the oveconvergent Eichler-Shimura maps (\ref{eqESsequenceIntro}) with the Poincar\'e and Serre pairings.  One can define  a  Poincar\'e   pairing  between the overconvergent pro\'etale  cohomologies  
\begin{eqnarray}
\label{eqPairing1Intro}
\langle -,- \rangle_P & : &   H^1_{\proet,c}(Y_{\bb{C}_p},  D^{\delta}_{\chi,\et}(1)) \times H^1_{\proet}(Y_{\bb{C}_p},  A^{\delta}_{\chi,\et} ) \rightarrow  \s{O}(\n{V}')
\end{eqnarray}
where the left hand side term is the pro\'etale cohomology with compact support.   On the other hand,  one also has Serre pairings between overconvergent coherent cohomologies   
\begin{eqnarray}
\label{eqPairing2Intro}
\langle -, -\rangle_S&:& H^1_{w,c}(X, \omega_E^{-\chi}(-D))_{\epsilon}\times  H^{0}_{w}(X,\omega_E^{\chi+\alpha})_{\epsilon} \rightarrow  \s{O}(\n{V}')\\
\langle -, -\rangle_S&:& H^1_{w,c}(X, \omega_E^{w_0(\chi)})_{\epsilon}\times  H^{0}_{w}(X,\omega_E^{-w_0(\chi)+\alpha}(-D))_{\epsilon} \rightarrow  \s{O}(\n{V}'). \nonumber
\end{eqnarray}
 We have the following 
\begin{theo}[Theorem \ref{TheoPairingOVES}]
\label{TheoOVESDualIntro}
\begin{enumerate} 

\item The Poincar\'e and Serre pairings \eqref{eqPairing1Intro} and \eqref{eqPairing2Intro} are compatible with the $U_p$-operators and  the overconvergent  Eichler-Shimura maps.

 \item Let $\n{W}_T$ be the weight space of $T=\bbf{T}(\bb{Z}_p)$,  let $\n{V}\subset \n{W}_T$ be an open affinoid   and  $\chi$  the universal character of $\n{V}$.  Let $\kappa=(k_1,k_2)\in \n{V}$ be a dominant weight and   fix  $h<k_1-k_2+1$.   There exists an open affinoid  $\n{V}'\subset \n{V}$ containing $\kappa$ such that  the $(\leq h)$-part of the pairings \eqref{eqPairing1Intro} and \eqref{eqPairing2Intro}  are  perfect pairings of finite free $\bb{C}_p\widehat{\otimes} \s{O}(\n{V}')$-modules  compatible with the  Eichler-Shimura decomposition.   
\end{enumerate}
\end{theo}

The outline of the document is the following.  In Section \ref{SectionOvFlag} we develop the overconvergent theory over the flag variety.  We define the affinoid subspaces $U_w(\epsilon)\Iw_n$ and the sheaves $\s{L}(\chi)$.  We construct the $\delta$-analytic principal series $A^{\delta}_{\chi}$ and the maps (\ref{eqovBGGmapsIntro}).  We recall some facts of the BGG theory  for irreducible representations of $\GL_2$,  in particular we define the dual BGG complex $\BGG(\kappa)$. \\  

Then in Section \ref{SectionOvtheoryOverModCurves},  we  translate all the previous constructions from $\bb{P}^1_{\bb{Q}_p}$ to the modular curves via $\pi_{\HT}$. We define the strict neighbourhoods of the $w$-ordinary locus, the  overconvergent modular sheaves and the overconvergent Hodge-Tate maps.  We give the good normalizations of the Hecke operators and show that the $\HT$-maps are compatible with the normalized $U_p$-correspondence. \\

Finally, in Section \ref{SectionOverconvergentES}, we show how to obtain the classical Eichler-Shimura decomposition from the dual BGG complex, proving Theorems \ref{TheoBGGIntro} and \ref{TheoIntro1}, in the process we also prove the theorem for the cohomology with compact support.     Next, we construct the overconvergent Eichler-Shimura maps and obtain Theorem \ref{TheoOVESIntro}. Finally, we show the  compatibility of Poincar\'e and Serre  duality for the overconvergent Eichler-Shimura maps  obtaining Theorem \ref{TheoOVESDualIntro}.

\begin{acknowledgements} None of this work could ever be possible without the support of my advisor Vincent Pilloni,  I want to express my deep gratitude for the many hours he dedicated  explaining to me the theory of  overconvergent modular forms.   I want to thank professors  Adrian Iovita and Fabrizio Andreatta for the very fruitful exchanges during the spring of 2020,   and the opportunity to give a talk on this subject in the Workshop of higher Coleman theory in December of the same year.     I want to thank George Boxer,  Damien Junger,  Joaqu\'in Rodrigues Jacinto and  Ju-Feng Wu for  corrections and comments in an earlier version of this document.  Finally, I want to thank the anonymous referees for the many suggestions and corrections that improved the presentation of this paper.    This work has been written while the author was a Ph. D. student at the ENS de Lyon. 
\end{acknowledgements}

\begin{notation}

Throughout this document we fix a prime number $p$, we fix an algebraic closure of $\bb{Q}_p$ and denote by $\bb{C}_p$ its $p$-adic completion.    We will  work with adic spaces over $\Spa(\bb{Q}_p,\bb{Z}_p)$ which are either locally  topologically of finite type over a non-archimedean extension $K$ of $\bb{Q}_p$, or  perfectoid spaces.  

Let $X$ be a log adic space over $\bb{Q}_p$,  we will work with the pro\'etale and pro-Kummer-\'etale site of $X$ as introduced in \cite{ScholzeHodgeTheory2013} and \cite{DiaoLogarithmicHilbert2018, DiaoLogarithmic2019}. We denote by $X_{?}$,  with $?\in \{\an, \et,\ket,\proet,\proket\}$,  the analytic, \'etale, Kummer-\'etale, pro\'etale and pro-Kummer-\'etale sites of $X$ respectively. An space without underlying log structure will be  endowed with the trivial one.  Fiber products are always fiber products of fs log adic spaces unless otherwise specified, cf.  \cite[Proposition  2.3.27]{DiaoLogarithmic2019}. 

 Finally, we will denote by $\s{O}_{X}^{(+)}$ the uncompleted structural sheaves over $X_{\proket}$, and by $\widehat{\s{O}}_X^{(+)}$ their $p$-adic completion, we will omit the index $X$ if the space is clear in the context.  Let $V$ be a topological $\bb{Z}_p$-module, by an abuse of notation we  also denote by $V$ the pro-Kummer-\'etale sheaf over $X_{\proket}$ whose points at an object $U$  are equal to the space of continuous functions $\mathrm{Cont}(|U|, V)$, where $|U|$ is the underlying topological space attached to $U$. 

\end{notation}


\section{Overconvergent sheaves over the flag variety}
\label{SectionOvFlag}

Let $\GL_2$ be the algebraic group of  invertible $2\times 2$ matrices.  Let $\bbf{B}$ and $\bbf{T}$ be the upper triangular Borel and the diagonal torus of $\GL_2$, let $\bbf{N}\subset \bbf{B}$ be the unipotent radical consisting of upper triangular unipotent matrices. We also let $\overline{\bbf{B}}$ and $\overline{\bbf{N}}$ be the lower triangular Borel and its unipotent radical respectively.  Let $W=\{1,w_0\}$ be the Weyl group of $\GL_2$. Given $n\geq 1$ we let $\Iw_n\subset \GL_2(\bb{Z}_p)$ denote the Iwahori subgroup of level $p^n$, i.e, the subgroup of invertible matrices $\left( \begin{matrix}
a & b \\ c & d
\end{matrix} \right)$ such that $c \equiv 0 \mod p^n$. We let  $\FL=  \bbf{B}\backslash \GL_2$  be  the flag variety  and  $\Fl$ its  analytification to an  adic space over $\Spa(\bb{Q}_p, \bb{Z}_p)$.    From now on, we see all the previous schemes as living over $\bb{Q}_p$.

The goal of this section is to introduce a family of $\Iw_n$-stable  overconvergent neighbourhoods of the $\Iw_n$-orbit of $w\in W$ in $\Fl$. Then, we introduce some $\Iw_n$-equivariant line bundles   which play the role of the overconvergent modular sheaves over $\Fl$. Finally, we construct some weight vector morphisms, which will be translated in the overconvergent Hodge-Tate maps over the modular curve.

For future reference we make the following convention 

\begin{convention}
\label{DefiHsubgroup}

Let $\bbf{H}$ be an algebraic group scheme over $\Spec \bb{Z}_p$.  We denote by $\n{H}^0$ the rigid generic fiber of the $p$-adic completion of $\bbf{H}$, and by $\n{H}$ the analytification of the schematic generic fiber of $\bbf{H}$, see \cite{HuberEtaleCohomology}. Given $\delta>0$  a rational number, we let $\n{H}(\delta)\subset \n{H}^0\subset \n{H}$ denote the open subgroup whose $(R,R^+)$-points are 
\[
\n{H}(\delta)(R,R^+)= \ker ( \bbf{H}(R^+)\to \bbf{H}(R^+/p^{\delta}R^+)).
\]
We call $\n{H}(\delta)$ the $\delta$-neighbourhood of the identity in $\n{H}$.   
\end{convention}

It will be useful to introduce some particular open subgroups of $\GLa_2$.

\begin{definition}
\label{DefinitionOpenGl2}
 Let $\epsilon \geq \delta$ be  positive rational numbers.  
 \begin{enumerate}
 
\item  We denote 
\[
\GLa_2(\epsilon,\delta):= \n{N}(\delta)\times \n{T}(\delta) \times \overline{\n{N}}(\epsilon)\subset \GLa_2.
\]
 
\item Suppose that $\delta \geq n$.   The $\delta$-neighbourhood of $\Iw_n$ in $\GLa_2$ is the open subgroup 
\[
\IW_n({\delta}):= \Iw_n \GLa_2(\delta)= \GLa_2(\delta)\Iw_n. 
\] 
We refer to  $\IW_n(\delta)$ as an affinoid  Iwahori subgroup of $\GLa_2$.   

\item We let $T$, $B$, $N$, etc.  denote the $\bb{Z}_p$-points of $\bbf{T}$,  $\bbf{B}$,  $\bbf{N}$, etc.  Let $n\geq 1$,  we define the following subgroups of $T$, $N$ and $\overline{N}$
\[
T_n= \left( \begin{matrix} 1+p^n \bb{Z}_p & 0 \\ 0 & 1+p^n \bb{Z}_p  \end{matrix} \right), \;\;  N_n=\left( \begin{matrix} 1 & p^n \bb{Z}_p \\ 0 & 1  \end{matrix} \right), \;\;  \overline{N}_n = \left( \begin{matrix} 1 & 0 \\ p^n \bb{Z}_p & 1   \end{matrix} \right). 
\]

\end{enumerate}

\end{definition}

\subsection{$\GL_2$-equivariant sheaves over the flag variety}

In the following paragraph we fix some notations regarding the representation theory of $\GL_2$. Let $X^*(\bbf{T})$ be the character group of the diagonal torus, we identify $X^*(\bbf{T}) \cong \bb{Z}^2$ via the presentation 
\[
\bbf{T} = \left( \begin{matrix}
 \bb{G}_m  & 0 \\ 0 & \bb{G}_m
\end{matrix} \right).
\]
A weight $\kappa\in X^*(\bbf{T})$, written as $\kappa= (k_1,k_2)$, is said dominant if $k_1\geq k_2$. We denote by $X^*(\bbf{T})^+$ the cone of dominant weights.  Given $\kappa\in X^*(\bbf{T})^+$, we let $V_{\kappa}$ denote the irreducible algebraic representation of $\GL_2$ of highest weight $\kappa$; let $\St$ and $\det$ be the standard and the determinant representations of $\GL_2$, explicitly one has  that $V_{\kappa} \cong \Sym^{k_1-k_2} \St \otimes \det^{k_2}$.   

Since $\FL= \bbf{B}\backslash \GL_2$, there is an equivalence between the category  of algebraic representations of $\bbf{B}$, and the category of $\GL_2$-equivariant vector bundles over $\FL$.  Explicitly, given $W$ a representation of $\bbf{B}$, one forms the $\GL_2$-equivariant vector bundle
\[
\n{W} := \GL_2 \times^{\bbf{B}} W := \bbf{B}\backslash (\GL_2 \times W),
\]
where in the right hand side term the group $\bbf{B}$ acts diagonally.

\begin{definition}
\label{DefiEqLineBundleFlag}
Let $\kappa \in X^*(\bbf{T})$, we define $\s{L}(\kappa)$ to be  the $\GL_2$-equivariant line bundle given by $\GL_w \times^{\bbf{B}} w_0(\kappa)$.  
\end{definition}

\begin{remark}
\label{RemarkOtherDescriptionLineBundle}
\begin{enumerate}
\item The line bundle $\n{L}(\kappa)$ can be described in the following way:  let $\widetilde{\FL}= \bbf{N}\backslash \GL_2$ be the natural $\bbf{T}$-torsor over $\FL$ and $\pi:  \widetilde{\FL} \to  \FL$ the projection map, then $\pi_* \s{O}_{\widetilde{\FL}}$ is endowed with a left regular action of $\bbf{T}$. One can construct the line bundle $\n{L}(\kappa)$ as the following isotypic component: 
\begin{align*}
\n{L}(\kappa) & = \pi_* \s{O}_{\widetilde{\FL}}[-w_0(\kappa)] \\ 
& = \{ f \in \pi_* \s{O}_{\widetilde{\FL}} :  f(t x) = w_0(\kappa) (t) f(x) \mbox{ for } t\in\bbf{T} \}. 
\end{align*}
The previous description shows that 
\[
\widetilde{\FL}= \underline{\Isom}(\s{O}_{\FL}, \s{L}(0,-1))\times \underline{\Isom}( \s{O}_{\FL}, \s{L}(-1,0)). 
\]
\item  The convention on the weight is made such that, if $\kappa$ is dominant,  $\Gamma(\FL, \n{L}(\kappa))$ is isomorphic to $V_{\kappa}$ as a $\GL_2$-representation.

\end{enumerate}

\end{remark}

\subsection{Overconvergent line bundles over the flag variety}
\label{SubsectionOpenAffinoidFlag}

In order to define the overconvergent line bundles we first need to introduce some affinoid neighbourhoods of $w\in W$.  

\begin{definition}
Let $\epsilon >0$ be a rational number, $w \in W$ and $w\GLa_2(\epsilon)$ the $\epsilon$-neighbourhood of $w$ in $\GLa_2$.  We denote by $U_w(\epsilon)$ its image in $\Fl$. 
\end{definition}

\begin{lem}
\label{LemIwahoriDecomposition}
\begin{enumerate}
\item The collection $\{U_{w}(\epsilon)\}_{\epsilon > 0}$  is a basis of open affinoid neighbourhoods of $w\in \Fl$.  Moreover, we have a natural isomorphism 
\[
\overline{\n{N}}(\epsilon) w  \xrightarrow{\sim} U_{w}(\epsilon).
\]

\item The Iwahori subgroups admit Iwahori decompositions  
\[
\IW_n(\epsilon)= (\overline{N}_n\overline{\n{N}}(\epsilon))\times (T\n{T}(\epsilon))\times (N \n{N}(\epsilon))
\]
\item  Let $\epsilon\geq \delta \geq n \geq 1$.  We have   decompositions 
\begin{gather*}
\GLa_2(\epsilon,\delta) \Iw_n = (N\n{N}(\delta))\times(T \n{T}(\delta))\times(\overline{N}_n \overline{\n{N}}(\epsilon) ) \\
\GLa_2(\epsilon,\delta) w_0 \Iw_n= (N_n \n{N}(\delta))\times (T\n{T}(\delta)) \times( \overline{N} \overline{\n{N}}(\epsilon) )w_0. 
\end{gather*}
\end{enumerate}

\proof
The collection $\{U_w(\epsilon)\}_{\epsilon >0}$ is a basis of neighbourhoods of $w$  since  $\Fl$ is a locally spectral space and  $\bigcap_{\epsilon>0} U_w(\epsilon)=\{w\}$. The isomorphism  $U_w(\epsilon)\cong  \overline{\n{N}}(\epsilon)w$ is obvious.  Next  we prove (2), part (3) is done in a  similar way.  It suffices to show the equality at $(R,R^+)$-points, with $(R,R^+)$ a uniform affinoid $\bb{Q}_p$-algebra. By definition we have 
\[
\IW_n(\epsilon)=  \left( \begin{matrix}
\bb{Z}_p^\times(1+p^\epsilon \bb{D}^1_{\bb{Q}_p})&  \bb{Z}_p+p^\epsilon \bb{D}^1_{\bb{Q}_p}  \\ p^n\bb{Z}_p+p^\epsilon \bb{D}^1_{\bb{Q}_p} & \bb{Z}_p^\times(1+p^\epsilon \bb{D}^1_{\bb{Q}_p})
\end{matrix} \right),
\]
where $\bb{D}^1_{\bb{Q}_p}= \Spa(\bb{Q}_p\langle T \rangle, \bb{Z}_p \langle T \rangle )$ is the closed affinoid disc. 
Then 
\[
\IW_n(\epsilon)(R,R^+)= \left( \begin{matrix}
\bb{Z}_p^\times(1+p^\epsilon R^+)&  \bb{Z}_p+p^\epsilon R^+  \\ p^n\bb{Z}_p+p^\epsilon R^+& \bb{Z}_p^\times(1+p^\epsilon R^+)
\end{matrix} \right) 
\]
Let $g\in \IW_n(\epsilon)(R,R^+)$, writing 
\[
g= \left( \begin{matrix}
1 & x_2 \\ 0 & 1
\end{matrix} \right)  \left( \begin{matrix}
x_1 & 0 \\ 0 & x_4 
\end{matrix} \right) \left( \begin{matrix}
1 & 0 \\ x_3 & 1
\end{matrix} \right)
\]
and solving the equations one finds $x_3\in p^n \bb{Z}_p+ p^{\epsilon}R^{+}$, $x_1$ and $x_4\in \bb{Z}_p^\times (1+p^{\epsilon }R^{+})$,  and $x_2\in \bb{Z}_p+ p^\epsilon R^+$ which gives (2).  \endproof
\end{lem}

\begin{remark}
Let us identify $\Fl \cong \bb{P}^1_{\bb{Q}_p}$ by taking $[0:1]\in \bb{P}^1_{\bb{Q}_p}$ as marked point, and where $\GL_2$ acts by 
\[
[x:y] \left( \begin{matrix} a & b \\ c & d \end{matrix} \right) = [ax+cy:  bx+dy]. 
\]
Let $T= \frac{x}{y}$ be the canonical coordinate and $\epsilon = p^{-n}$.  In the notation of \cite[\S 4.2]{AIOverconvergentES2} we have 
\begin{align*}
U_1(\epsilon) &= U_{0,0}^{(n)} = \{[x:y]\in \bb{P}^1_{\bb{Q}_p} :  |T/ p^n|\leq 1\}  \\ 
U_{w_0}(\epsilon) & = U_{\infty}^{(n)} = \{[x:y]\in \bb{P}^1_{\bb{Q}_p} : |1/(p^nT)|\leq 1\}. 
\end{align*}
\end{remark}

Lemma \ref{Lemvarpidynamics} describes the dynamics of the element $\varpi= \diag (1,p)$ over $\Fl$.  This action has only two fixed points represented by the elements of $W$, and expands or shrinks neighbourhoods of $1$ and $w_0$ respectively.   

\begin{lem}
\label{Lemvarpidynamics}
Let $\varpi= \diag(1,p)$.  The following holds
\begin{enumerate}

\item $U_1(\epsilon)\varpi= U_1(\epsilon-1)$ and $U_{w_0}(\epsilon)\varpi= U_{w_0}(\epsilon+1)$.

\item Let $\epsilon \geq n \geq 1$,  then  $U_1(\epsilon)\Iw_n \varpi = U_1(\epsilon-1)\Iw_{n-1}$ and $U_{w_0}(\epsilon)\Iw_n \varpi =U_{w_0}(\epsilon+1) N_1$. 

\end{enumerate}
\proof
It follows from Lemma  \ref{LemIwahoriDecomposition} and the computation 
\[
\left( \begin{matrix} 
1 & 0 \\ 0 & p^{-1} 
\end{matrix}\right) \left( \begin{matrix} 
  a & b\\  c & d
\end{matrix}\right) \left( \begin{matrix} 
1  & 0  \\ 0  &  p
\end{matrix}\right)= \left( \begin{matrix} 
 a & pb \\ p^{-1}c  & d
\end{matrix}\right).
\]
\endproof
\end{lem}

Let $\Gamma$ be a finite $\bb{Z}_p$-module. Abstractly, $\Gamma$ is isomorphic to $\bb{Z}_p^s\bigoplus \Gamma_{tor}$ where $s\in \bb{N}$ and $\Gamma_{tor}\subset \Gamma$ is the torsion subgroup, we call such an isomorphism a chart of $\Gamma$.  Let  $\n{V}=\Spa(R,R^+)$ be an affinoid adic space with $R$ an uniform Tate $\bb{Q}_p$-algebra, and $\chi: \Gamma \to R^{+,\times}$ a continuous character. 

\begin{lem}[{\cite[Lemma 3.4.6]{UrbanEigenvar}}]
\label{PropositionCharacterLocallyAnalytic}
Let $\psi: \Gamma \cong \bb{Z}_p^s\times \Gamma_{tor}$ be a chart.  There exists  $\delta>0$  such that $\chi$ extends to a character 
\begin{equation}
\label{ExtensionAnalyticCharacter}
\chi:  (\bb{Z}_p+p^{\delta} \bb{D}^1_{\bb{Q}_p})^s \times \Gamma_{tor}  \times \n{V} \rightarrow \bb{G}_m.
\end{equation}
We say  that $\chi$ is a $\delta$-analytic character of $\Gamma$ with respect to the chart $\psi$. 
\end{lem}

\begin{remark}
In the following we will take $\Gamma= T$, and we say that $\chi$ is $\delta$-analytic if it extends to a character of $T\n{T}(\delta)$.  Let $\f{W}_T = \Spf \bb{Z}_p[[T]]$ be the weight space of $T$ and $\n{W}_T$ its rigid generic fiber, in practice we will take $\n{V}= \Spa(R,R^+)\subset \n{W}_T$ an affinoid subspace and $\chi= \chi_{univ}$ the universal character over $\n{V}$.  
\end{remark}

\begin{definition}
\label{DefinitionReductionTorsorFlag}
Let $\epsilon \geq \delta \geq n \geq 1$,  $w\in W$ and $\widetilde{\Fl}= \n{N}\backslash \GLa_2$ the natural $\n{T}$-torsor over $\Fl$. 

\begin{enumerate}

\item We define the following neighbourhood of $w$ in $\widetilde{\Fl}$
\[
\widetilde{U}_{w}(\epsilon,\delta)= \n{N}(\delta) \backslash  \GLa_2(\epsilon,\delta) w.
\]
Let $\pr: \widetilde{U}_w(\epsilon,\delta)\Iw_n \to U_w(\epsilon)\Iw_n$ denote the natural projection, Lemma \ref{LemIwahoriDecomposition} (3) implies that $\pr$ is a trivial $T\n{T(\delta)}$-torsor. 

\item Let $\chi: T \to R^\times$ be a $\delta$-analytic character. We define the $\s{O}_{\Fl}\widehat{\otimes} R$-line bundle 
\[
\n{L}(\chi)= \pr_* \s{O}_{\widetilde{U}_w(\epsilon, \delta) }\widehat{\otimes} R [-w_0(\chi)].
\]
In other words, $\n{L}(\chi)$ is the line bundle whose sections over $U\subset U_w(\epsilon)\Iw_n$ are 
\[
\n{L}(\chi)(U)= \{f \in  \s{O}_{\Fl}(\pr^{-1}(U))\widehat{\otimes }R : f(tx) = w_0(\chi)(t)f(x) \mbox{ for } t\in T\n{T}(\delta)\}. 
\]
\end{enumerate}

\end{definition}

\begin{remark}
\label{RemarkUpActionFlag}
Let $\varpi= \diag(1,p)$,  $c= \diag(p,p)$ and $\Lambda= \langle \varpi, c\rangle\subset \bbf{T}(\bb{Q}_p)$. The natural map $\widetilde{\Fl}\to \Lambda\backslash \widetilde{\Fl}$ identifies $\widetilde{U}_w(\epsilon,\delta)$ with an open affinoid of the quotient, namely, the $\Lambda$ orbit of $\widetilde{U}_w(\epsilon,\delta)$ in $\widetilde{\Fl}$ is the disjoint union 
\[
\bigsqcup_{\gamma \in \Lambda} \widetilde{U}_{w}(\epsilon, \delta) \gamma. 
\]
\end{remark}

By construction, the sheaves $\n{L}(\chi)$ are independent of $\delta$ and $\epsilon$.  They fit in  a $U_p$-correspondence as follows: let  $\varpi= \diag(1,p)$ and  consider the double coset $\Iw_n \varpi \Iw_n$, one has that 
\[
\Iw_n \varpi \Iw_n=\bigsqcup_{a=0}^{p-1} \Iw_n \left( \begin{matrix}  1  & -a \\ 0 & p \end{matrix} \right),
\]
let us denote $U_{p,a}=\left( \begin{matrix}  1  & -a \\ 0 & p \end{matrix} \right)$.

\begin{definition}
\label{DefinitionUpCorrespondenceFlag}
We define the $U_p$-correspondence of $\widetilde{\Fl}$ (resp. the normalized $U_p$-correspondence of $\Lambda \backslash \widetilde{\Fl}$) to be the diagram
\begin{equation}
\label{eqCorrespondenceFlag}
\begin{tikzcd}
 & \bigsqcup_{a=0}^{p-1} \widetilde{\Fl} \ar[rd, "p_2"] \ar[ld, "p_1"'] &   \\ 
  \widetilde{\Fl} & & \widetilde{\Fl},
\end{tikzcd}
\end{equation}
(resp. for $\Lambda \backslash \widetilde{\Fl}$),  where $\widetilde{\Fl}_a= \widetilde{\Fl}$,  $p_1|_{ \widetilde{\Fl}_a}= \id_{ \Fl}$ and $p_2 |_{\Fl_a} = R_{U_{p,a}^{-1}}$ is the right multiplication by $U_{p,a}^{-1}$.  

\end{definition}

\begin{remark}
\label{RemarkCorrQuotientFlag}
The correspondence \eqref{eqCorrespondenceFlag}  is equivariant for the natural action of $\Iw_n$, namely, the group acts by right multiplication on the bottom spaces, and it acts on the disjoint union $\bigsqcup_{a=0}^{p-1}\widetilde{\Fl}$ as follows: Let $\gamma\in \Iw_n$ and  $a,a'\in\{0,\ldots, p-1\}$ such that 
\[
U_{p,a} \gamma \in \Iw_n U_{p,a'}.
\]
Then, given $x\in \Fl_a$, we define $x\cdot \gamma$ to be $x\gamma\in \Fl_{a'}$. This action satisfy the following properties: 
\begin{enumerate}
\item The map $p_1$ is $\Iw_n$-equivariant.
\item The map $p_2$ preserves $\Iw_n$-orbits, i.e. the composition $\bigsqcup_{a=0}^{p-1}\widetilde{\Fl}\xrightarrow{p_2}\widetilde{\Fl}/\Iw_n$ onto the quotient stack\footnote{By considering the quotient as a $v$-stack, see \cite{ScholzeEtaleDiamonds}.} factors through $(\bigsqcup_{a=0}^{p-1} \widetilde{\Fl})/\Iw_n$. 
\end{enumerate} 
The previous shows that we have a correspondence of stacks 
\begin{equation}
\label{eqCorrespondenceStack}
\begin{tikzcd}
 & (\bigsqcup_{a=0}^{p-1} \widetilde{\Fl})/\Iw_n \ar[rd, "p_2"] \ar[ld, "p_1"'] &   \\ 
  \widetilde{\Fl}/\Iw_n & & \widetilde{\Fl}/\Iw_n,
\end{tikzcd}
\end{equation}
in \S \ref{SectionOvtheoryOverModCurves} we will relate this diagram to the $U_p$-correspondence of modular curves. 
\end{remark}

The following lemma describes the dynamics of the correspondence on  neighbourhoods of $1$ and $w_0$.
\begin{lem}
\label{LemCorrespondenceFlag}
The following hold. 
\begin{enumerate}
\item $p_2 (p_1^{-1}( \widetilde{U}_{w_0}(\epsilon,\delta)\Iw_n)) \supset   \widetilde{U}_{w_0}(\epsilon-1,\delta)\Iw_n$.

\item $p_2 (p_1^{-1}(\widetilde{U}_{1}(\epsilon,\delta)\Iw_n )) \subset \widetilde{U}_1(\epsilon+1, \delta)\Iw_n$.

\item $p_1 (p_2^{-1}(\widetilde{U}_{w_0}(\epsilon, \delta) \Iw_n ))\subset \widetilde{U}_{w_0}(\epsilon+1,\delta)\Iw_n$.

\item $p_1 (p_2^{-1}(\widetilde{U}_{1}(\epsilon, \delta) \Iw_n)) \supset \widetilde{U}_{1}(\epsilon-1, \delta)\Iw_n$.

\end{enumerate}
\end{lem}
\begin{proof}
This follows from the definition of the correspondence and Lemma \ref{Lemvarpidynamics}.
\end{proof}

\begin{definition}
\label{DefUpSheavesoverFlag}
\begin{enumerate}

\item Let $\kappa\in X^*(\bbf{T})$,  we define the $U_{p,\kappa}$ and $U_{p,\kappa}^t$-correspondences of $\s{L}(\kappa)$
\begin{equation*}
\label{eqCorrespondenceLineClassical}
U_{p,\kappa}: p_{2}^* \s{L}(\kappa)  \to p_1^* \s{L}(\chi) \mbox{ and } U_{p,\kappa}^t:  p_{1}^* \s{L}(\kappa) \to  p_2^*\s{L}(\chi) 
\end{equation*}
to be the maps constructed by taking $(-w_0(\kappa))$-isotypic components of the structural sheaves of the diagram \eqref{eqCorrespondenceFlag}. 

\item Let $\chi$ be a $\delta$-analytic character of $T$, we define the normalized  $U_p$ and $U_p^t$-correspondences of $\s{L}(\chi)$ 
\begin{equation*}
\label{eqCorrespondenceLineBundle}
U_{p}: p_{2}^* \s{L}(\chi)  \to p_1^* \s{L}(\chi) \mbox{ and } U_{p}^t:  p_{1}^* \s{L}(\chi) \to  p_2^*\s{L}(\chi) 
\end{equation*}
to be the maps constructed by taking  $(-w_0(\chi))$-isotypic components of the structural sheaves of the diagrams 
\[
\begin{tikzcd}[column sep = 0.1 cm]
 & p_2^{-1}(\widetilde{U}_{w_0}(\epsilon+1,\delta)\Iw_n)  \ar[rd, "p_2"] \ar[ld, "p_1"'] &       \\ 
 \widetilde{U}_{w_0}(\epsilon+1, \delta) \Iw_n&  &  \widetilde{U}_{w_0}(\epsilon, \delta)\Iw_n       \\
   & p_1^{-1}(\widetilde{U}_{1}(\epsilon-1,\delta)\Iw_n)  \ar[rd,"p_2"] \ar[ld,  "p_1"']&  \\ 
    \widetilde{U}_{1}(\epsilon-1,\delta)\Iw_n  & &  \widetilde{U}_{1}(\epsilon, \delta)\Iw_n
\end{tikzcd}
\]
obtained from the normalized diagram \eqref{eqCorrespondenceFlag} and Lemma \ref{LemCorrespondenceFlag}.  
\end{enumerate}
\end{definition}

\begin{remark}
\label{RemakNormalizationUp}
Let $\kappa$ be a classical weight, the relation between the classical and the normalized $U_p$-operators for $\s{L}(\kappa)$ is given by the formulas
\begin{eqnarray*}
U_p= \frac{1}{w_0(\kappa)(\varpi^{-1})}U_{p,\kappa} \mbox{ and } U_p^{t}=  \frac{1}{w_0(\kappa)(\varpi)} U_{p,\kappa}^t & \mbox{ over } U_1(\epsilon)\Iw_n, \\
U_p= \frac{1}{\kappa (\varpi^{-1})} U_{p,\kappa} \mbox{ and } U_p^{t}=  \frac{1}{\kappa(\varpi)} U_{p,\kappa}^t & \mbox{ over } U_{w_0}(\epsilon)\Iw_n.
\end{eqnarray*}
\end{remark}

\subsection{Analytic principal series and distributions}

In the next paragraph we introduce some non-archimedean  spaces interpolating the finite dimensional representations of $\GL_2$.  Let $\n{V}= \Spa(R,R^+)$ be a uniform affinoid adic space over $\bb{Q}_p$ and $\chi : T \to R^{+,\times}$ a $\delta$-analytic character.  

\begin{definition}
\label{DefPrincipalSeriesDist}
Let $\delta \geq n$. 

\begin{enumerate}

\item We define the $\delta$-analytic principal series of weight $\chi$ to be the $\Iw_n$-module
\[
A_{\chi}^{\delta}= \Gamma(U_{w_0}(\delta)\Iw_n, \s{L}(\chi)),
\]
seen as a Banach space over $R$.  Equivalently, we have that 
\[
A_{\chi}^{\delta}=\{f: w_0 \IW_n(\epsilon,\delta)\to \bb{A}^1_{R} | f(bw_0 x)= w_0(\chi)(b) f(w_0x) \mbox{ for } b\in \n{B}\cap w_0 \IW_n(\epsilon,\delta) w_0^{-1} \}.
\]

\item We define the $\delta$-analytic distributions of weight $\chi$ to be the continuous weak dual  
\[
D_{\chi}^{\delta}= \Hom^0_{R}(A^{\delta}_{\chi}, R).
\]

\end{enumerate}
\end{definition} 

\begin{remark}
\label{RemarkBasisA}
It will come in handy to define lattices in the principal series and distributions, namely, let $\s{L}^+(\chi)= f_* \s{O}^+_{\widetilde{U}_{w}(\epsilon,\delta)\Iw_n}[-w_0(\chi)]$,  $A_{\chi}^{\delta,+}= \Gamma(U_{w_0}(\epsilon)\Iw_n,  \s{L}^+(\chi))$ and $D_{\chi}^{\delta,+}= \Hom_{R^+}(A^{\delta,+}_{\chi},R^+)$.  The space $N\n{N}(\delta)$ is a disjoint union of closed discs, in particular $\s{O}^+(N\n{N}(\delta))$ is an orthonormalizable $\bb{Z}_p$-algebra.  Let $\{e_i\}_{i\in I }$ be an orthonormalizable  basis of $\s{O}^+(N\n{N}(\delta))$,  using the Iwahori decomposition one has isomorphisms of $R^+$-modules 
\[
A^{\delta,+}_{\chi} \cong  \underset{i\in I}{\widehat{\bigoplus}} R^+ e_i \mbox{ and }
D^{\delta,+}_{\chi} \cong \prod_{i\in I} R^+ e_{i}^{\vee}. 
\]
\end{remark}

\begin{remark}
\label{RemarkAnalyticFunct}
It is easy to compare the $\delta$-analytic principal series and distributions defined above with those used in \cite{AISOvShimura2015}.  Let $\chi: T\rightarrow R^{+,\times}$ be a  $\delta$-analytic character written  as $\chi=(\chi_1,\chi_2)$.   Consider the set $\bb{Z}_p^\times\times \bb{Z}_p$ endowed with the right  multiplication by $\Iw_n$ and the left multiplication by $\bb{Z}_p^\times$.  We let $A^{\delta,+}_{\chi_1-\chi_2}$ be the space of functions $f: \bb{Z}_p^\times \times   \bb{Z}_p\rightarrow R^{+,\times}$ satisfying the following conditions 
\begin{itemize}
\item[i.] $f|_{1\times \bb{Z}_p}$ extends to an analytic function of $\bb{Z}_p+ p^\delta \bb{D}^1_{\bb{Q}_p}$, 

\item[ii.] $f(tx)=(\chi_1-\chi_2)(t)f(x)$ for $t\in \bb{Z}_p^\times$ and $x\in \bb{Z}_p^\times \times \bb{Z}_p$.
\end{itemize}
Note that $\bb{Z}_p^\times \times \bb{Z}_p$ endowed with the action of $\Iw_n$ and $\bb{Z}_p^\times$ is isomorphic to the quotient 
\[
\bb{Z}_p^\times \times \bb{Z}_p=  \left( \begin{matrix}
1 & 0  \\ p^n\bb{Z}_p & \bb{Z}_p^\times
\end{matrix} \right) \backslash  \left( \begin{matrix}
\bb{Z}_p^\times & \bb{Z}_p  \\ p^n\bb{Z}_p & \bb{Z}_p^\times
\end{matrix} \right)= \left( \begin{matrix}
1 & 0  \\ p^n\bb{Z}_p & \bb{Z}_p^\times
\end{matrix} \right) \backslash  \Iw_n.
\]
Thus, we have an isomorphism 
\[
A^{\delta,+}_{\chi}= A^{\delta,+}_{\chi_1-\chi_2} \otimes (\det)^{\chi_2}. 
\]

\end{remark} 

\begin{remark}
\label{RemarkBasisTopEspaces}
Let $\delta'>\delta \geq n$,  the inclusion $U_{w_0}(\delta')\Iw_n \subset U_{w_0}(\delta)\Iw_n$ induces maps $A_{\chi}^{\delta,+}\to A_{\chi}^{\delta',+}$ and $D_{\chi}^{\delta',+}\to D_{\chi}^{\delta,+}$. Furthermore, let $\widehat{A}^{\delta,+}_{\chi}$ be the completion of $A^{\delta,+}_{\chi}$ in $A^{\delta',+}_{\chi}$. Since the inclusion of affinoids above is strict, one has that 
\[
\widehat{A}_{\chi}^{\delta,+}= \prod_{i\in I} R^+ e_i,
\] 
endowed with the product topology.  Let $D_{\chi}^{\delta,b,+}$ be the continuous $R^+$-dual of $\widehat{A}_{\chi}^{\delta,+}$ endowed with the $p$-adic topology,  then the arrow $D_{\chi}^{\delta',+} \to D_{\chi}^{\delta,+}$  factors through $D_{\chi}^{\delta,b,+}$.  Moreover, we have that 
\[
D^{\delta,b,+}_{\chi} \cong   \underset{i\in I}{\widehat{\bigoplus}} R^+ e_i^{\vee}.  
\]
The  previous discussion shows that the directed (resp. inverse) systems $\{A_{\chi}^{\delta,+}\}_{\delta\geq n}$ and  $\{\widehat{A}_{\chi}^{\delta,+}\}_{\delta\geq n}$ (resp. $\{D_{\chi}^{\delta,+}\}_{\delta\geq n}$ and $\{D_{\chi}^{\delta,b,+}\}_{\delta\geq n}$) are isomorphic as systems of topological $R^+$-modules. 
\end{remark}

For future reference we will prove a devisage of  $A_{\chi}^{\delta}$ and $D^{\delta}_{\chi}$ in terms of finite $\Iw_n$-modules, we need a lemma:  
 
\begin{lem}
\label{LemmaUniformContGroupAction}
Let $(F,\n{O}_F)$ be a non archimedean field.  Let $\n{H}=\Spa(A,A^+)$ be an affinoid adic analytic group over $F$,  and  $Z=\Spa(R,R^+)$  an affinoid adic space topologically of finite type over $\Spa(F,\n{O}_F)$. Let $\Theta: \n{H}\times Z \rightarrow  Z$ be an action of $\n{H}$ over $Z$. Then for all $N>0$ there exists a neighbourhood $1\in U \subset \n{H}$ such that for all $g\in U $, $z\in Z$  and $f\in \s{O}^+(Z)$,  we have  $|f(z)-f(gz)|\leq |p|^{N}$. 
\proof
As $\s{O}^+(Z)=R^+$ is topologically of finite type over $\n{O}_F$, it suffices to prove the proposition for a single $f\in R^+$. Let $\Theta^*: R^+\rightarrow (A^+\widehat{\otimes}_{\n{O}_K} R^+)^+$ be the pullback of the multiplication map. Let $V\subset \n{H}\times Z$ be the open affinoid subspace defined by the equation 
\[
|1\otimes f- \Theta^*(f)|\leq |p|^{N}.
\] 
As $V$ contains $1\times Z$ and this is a quasi-compact closed subset of $\n{H}\times Z$, there exists $1\in U_f\subset U$ such that $U_f\times Z \subset V$. Therefore, for all $g\in U_f$ and $z\in Z$ we have $|f(z)-f(gz)|\leq |p|^{N}$.
\endproof
\end{lem}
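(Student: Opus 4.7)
The plan is to reduce the problem to a finite family of generating functions and then apply a tube-lemma argument in the adic setting. First I would observe that since $R^+$ is topologically of finite type over $\n{O}_K$, there exist finitely many topological generators $f_1, \dots, f_m \in R^+$ such that every $f \in R^+$ can be written as a convergent $\n{O}_K$-linear combination of monomials in the $f_i$. By the non-archimedean triangle inequality, if we can find a single neighbourhood $\n{U} \subset \n{H}$ of $1$ such that $|f_i(x) - f_i(gx)| \leq |p|^N$ for all $g \in \n{U}$, $x \in \n{X}$ and $i=1,\dots,m$, then the bound for the generators automatically propagates to arbitrary $f \in R^+$ (since polynomial operations and the $p$-adic completion preserve the bound, using $|f_i(x)|,|f_i(gx)| \leq 1$).

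It therefore suffices to handle one function $f = f_i$ at a time and intersect the resulting finitely many neighbourhoods. For a fixed $f \in R^+$, I would consider the comorphism $\Theta^* \colon R^+ \to (A \widehat{\otimes}_{\n{O}_K} R^+)^+$ of the action $\Theta$ and form the element
\[
h_f := 1 \otimes f - \Theta^*(f) \in A \widehat{\otimes}_{\n{O}_K} R^+.
\]
Since $\Theta(1, \cdot) = \id_{\n{X}}$, the restriction of $h_f$ to the closed subspace $\{1\} \times \n{X} \subset \n{H}\times \n{X}$ is identically zero. Hence the rational subset
\[
\n{V}_{f} := \bigl\{ (g,x) \in \n{H}\times \n{X} : |h_f(g,x)| \leq |p|^N \bigr\}
\]
is an open neighbourhood of $\{1\}\times \n{X}$ in the quasi-compact adic space $\n{H}\times \n{X}$.

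The key step is the tube-lemma type statement that, since $\{1\}\times \n{X}$ is quasi-compact and $\n{V}_f$ is an open neighbourhood of it, there exists an open neighbourhood $1 \in \n{U}_f \subset \n{H}$ with $\n{U}_f \times \n{X} \subset \n{V}_f$. This is the main point, and it follows by covering $\{1\}\times \n{X}$ by finitely many rational subsets of product form $W \times W'$ contained in $\n{V}_f$ (using quasi-compactness of $\n{X}$) and taking $\n{U}_f$ to be a suitable intersection. Finally I would set $\n{U} := \bigcap_{i=1}^m \n{U}_{f_i}$, which is still an open neighbourhood of $1$ and satisfies the required uniform bound for every generator; the reduction in the first paragraph then yields the estimate for all $f \in \s{O}^+(\n{X})$ simultaneously. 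The main (and essentially only) obstacle is justifying this tube-lemma step in the adic category, but since $\n{X}$ is quasi-compact it is a standard topological argument on $|\n{H}|\times |\n{X}|$ using a basis of rational neighbourhoods.
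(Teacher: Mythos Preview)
Your proposal is correct and follows essentially the same approach as the paper: reduce to finitely many topological generators of $R^+$, consider the rational open $\n{V}_f=\{|1\otimes f-\Theta^*(f)|\leq |p|^N\}$ containing the quasi-compact closed set $\{1\}\times\n{X}$, and apply a tube-lemma argument to find $\n{U}_f\times\n{X}\subset\n{V}_f$. Your write-up is in fact more explicit than the paper's about why the reduction to generators works and why the tube lemma holds in the adic setting.
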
 
\begin{cor}
\label{corDevisageSheavesAetD}
Let $s\geq 1$.  There exists an open subgroup $H\subset \Iw_n$,  depending on $s$, which acts trivially on  $A^{\delta,+}_{\chi}/p^s$. In particular, we can write $A^{\delta,+}_{\chi}/p^s$ as a colimit of finite $R/p^s[\Iw_n]$-modules (dually, we can write $D^{\delta,+}_{\chi}/p^s$ as a projective limit of finite $R/p^s[\Iw_n]$-modules).   
\end{cor}
\begin{proof}
The affinoid group $\IW_n(\delta)$ acts on the space $\widetilde{U}_{w_0}(\delta,\delta)$ by right multiplication.  The corollary follows by  Lemma \ref{LemmaUniformContGroupAction} since $A^{\delta,+}_{\chi}$ is by definition an isotypic component of the global functions of  $\widetilde{U}_{w_0}(\delta,\delta)$. Dually, consider the map $D_{\chi}^{\delta,+}\to D_{\chi}^{\delta-1,+}$ and define $\Fil^s D_{\chi}^{\delta,+}= D_{\chi}^{\delta,+}\cap p^{s} D_{\chi}^{\delta-1,+}$.  Then the quotients $D_{\chi}^{\delta,+}/ \Fil^s D_{\chi}^{\delta,+}$ are finite $R^+/p^s$-modules and the weak topology of $D_{\chi}^{\delta,+}$ is the same as the inverse limit topology  (see \cite[Proposition 3.10]{AISOvShimura2015})
\[
D_{\chi}^{\delta,+}= \varprojlim_s D_{\chi}^{\delta,+}/\Fil^s D_{\chi}^{\delta,s}. 
\]
The corollary follows. 
\end{proof}

\begin{definition}
\label{DefinitionTensor}
Let $V$ be a Banach space over $\bb{Q}_p$ and $V^+\subset V$ a lattice. We define the following completed tensor products 
\[
A^{\delta}_{\chi}\widehat{\otimes} V =(\varprojlim_s A^{\delta,+}_{\chi}/p^s \otimes V^+/p^s) [\frac{1}{p}] \mbox{ and  } D^{\delta}_{\chi}\widehat{\otimes} V = (\varprojlim_{s} D^{\delta,+}_{\chi}/ \Fil^s D^{\delta,+}_{\chi} \otimes V^+/p^s) [\frac{1}{p}]. 
\]
\end{definition}

Next, let us explain how $A^{\delta}_{\chi}$ and $D^{\delta}_{\chi}$ interpolate the finite dimensional representations of $\GL_2$.

\begin{prop}
\label{PropClassicandFamilies}

Let $\kappa\in X^*(\bbf{T})$ be a dominant weight and $\delta \geq n$. There is a natural  $\Iw_n$-equivariant inclusion $V_\kappa \rightarrow A_\kappa^{\delta}$.   Dually. there is a natural  $\Iw_n$-equivariant surjective  map $ D^{\delta}_{\kappa}\rightarrow V_{\kappa}^{\vee}=V_{-w_0(\kappa)}$.

\proof
The second map is just the dual of the first one, the arrow $V_{\kappa } \to A^{\delta}_{\kappa}$ arises from the global sections functor applied to the inclusion $U_{w_0}(\delta)\Iw_n \subset \Fl$ and to the line bundle $\s{L}(\kappa)$. 
\endproof

\end{prop}

Finally, let us briefly discuss the $U_p$-correspondence of the principal series and distributions. We let $A_{\chi}^{\delta} \widehat{\otimes} \s{O}_{\Fl}$ and $D_{\chi}^{\delta} \widehat{\otimes} \s{O}_{\Fl}$ be the constant  $\Iw_n$-equivariant quasi-coherent sheaves over $\Fl$ induced by $A_{\chi}^{\delta}$ and $D_{\chi}^{\delta}$.  We have isomorphisms of $\s{O}_{\Fl}$-sheaves 
\begin{align*}
A^{\delta}_{\chi}\widehat{\otimes} \s{O}_{\Fl} =  \underset{i\in I}{\widehat{\bigoplus}} (R\widehat{\otimes } \s{O}_{\Fl}) e_i  \mbox{ and }
D^{\delta}_{\chi}\widehat{\otimes} \s{O}_{\Fl} = \prod_{i\in I} (R^+\widehat{\otimes } \s{O}_{\Fl}^+) e_i^{\vee}[\frac{1}{p}]. 
\end{align*}
By Remark \ref{RemarkUpActionFlag} one can endow $A^{\delta}_{\chi}$ with an action of $\varpi$ commuting with $\Iw_n$ (dually, one can endow $D^{\delta}_{\chi}$ with an action of $\varpi^{-1}$). Moreover, the multiplication by $\varpi$ on $A^{\delta}_{\chi}$ (resp. $D^{\delta}_{\chi}$) factors through  $A^{\delta-1}_{\chi}$ (resp. $D^{\delta+1}_{\chi}$) in accordance with Lemma \ref{Lemvarpidynamics}.    Using this action and diagram \eqref{eqCorrespondenceFlag}, one defines maps
\begin{equation}
\label{eqUpForAetD}
U_p^t:  p_{1}^* (A^{\delta}_{\chi}\widehat{\otimes} \s{O}_{\Fl}) \to p_2^*( A^{\delta-1}_{\chi} \widehat{\otimes} \s{O}_{\Fl} )\mbox{ and } U_p: p_{2}^* (D^{\delta}_{\chi}\widehat{\otimes} \s{O}_{\Fl}) \to p_1^* ( D^{\delta+1}_{\chi} \widehat{\otimes} \s{O}_{\Fl}),
\end{equation}
 see \cite[\S 3.5.2 and \S 4.5]{AIOverconvergentES2}  for more details.   We highlight that the previous $U_p$-correspondence is compatible with the maps of Proposition \ref{PropClassicandFamilies}, after normalizing the action of $\varpi$ and $c$ on $V_{\kappa}$, see Remark  \ref{RemakNormalizationUp} and the first Remark of \cite[\S 3.11]{AshStevensDeformations2008}.

\subsection{The dual BGG complex and the weight vector maps}

  Let $W=\{1,w_0\}$ be the Weyl group of $\GL_2$ and  $\bbf{B} w_0 \bbf{N}\subset \GL_2$  the big  cell.    We have a commutative diagram of torsors 
\begin{equation}
\label{eqTorsorsFlagBigcell}
\begin{tikzcd}
\bbf{B} w_0 \bbf{N} \ar[r] \ar[d] & \GL_2 \ar[d] \\ 
\bbf{B}\backslash   \bbf{B} w_0  \bbf{N} \ar[r] & \FL.
\end{tikzcd}
\end{equation}

Let $\kappa \in X^*(\bbf{T})$ be a character and $\s{L}(\kappa)$ the associated $\GL_2$-equivariant line  bundle over $\FL$, see Definition \ref{DefiEqLineBundleFlag}. Recall that, if $\kappa$ is dominant, the global sections of $\s{L}(\kappa)$ are isomorphic to $V_{\kappa}$. 

\begin{definition} We define the $(\f{g}, \bbf{B})$-representation  $ V(\kappa):=  \Gamma(\bbf{B}\backslash \bbf{B}w_0 \bbf{N}, \s{L}(\kappa))$, where the action of $(\f{g},\bbf{B})$ is induced by the right regular action on the big cell.  

\end{definition}

 As $\bbf{B}$-module, $V(\kappa)$ is a twist of the algebra of regular functions of $\bbf{N}$. Indeed, there is an isomorphism of affine schemes  $\bbf{B} \backslash \bbf{B} w_0 \bbf{N} \cong  \bbf{N}$, and one has 
\begin{equation}
\label{eqBGGresolutionfirststep}
V(\kappa) \cong  \kappa \otimes V(1)\cong  \kappa\otimes  \s{O}(\bbf{N}),
\end{equation}
where the action of $\bbf{B}=\bbf{T} \ltimes \bbf{N}$  on $\s{O}(\bbf{N})$ is induced from the map   $(n,b)\mapsto t_{b}^{-1}n t_{b} n_b$ for $(n,b)\in \bbf{N}\times \bbf{B}$ and $b= (t_b,n_b)\in\bbf{T} \ltimes \bbf{N} $. 
\begin{remark}
The $(\f{g}, \bbf{B})$-module $V(\kappa)$  is in fact the admissible dual of the Verma module of highest  weight $-w_0(\kappa)$,  see \S  3.10 of  \cite{AshStevensDeformations2008}. 
\end{remark}

 Let $\kappa$ be a dominant weight.    Taking the global sections of $\s{L}(\kappa)$ in the bottom arrow of diagram (\ref{eqTorsorsFlagBigcell}),  one obtains  a map 
 \[
 V_{\kappa} \rightarrow V(\kappa).
 \]
Writing $\kappa=(k_1,k_2)\in \bb{Z}^2$,  and  $\bb{G}_a\cong  \bbf{N} $ via $X \mapsto \left( \begin{matrix}
1 & X \\ 0 & 1
\end{matrix} \right)$,  the map of $V_{\kappa}$ in $V(\kappa)$ is identified in  (\ref{eqBGGresolutionfirststep}) with the inclusion  $\bb{Q}_p[X]_{k_1-k_2}\subset \bb{Q}_p[X]\cong \s{O}(\bbf{N})$  of   polynomials of degree $\leq k_1-k_2$.  We have the following:
\begin{prop}
\label{PropBGGclassic}
Let $\alpha=(1,-1)\in \bb{Z}^2 \cong X^*(\bbf{T})$ and let  $\kappa=(k_1,k_2)$ be  a dominant weight.  There is a short exact sequence of $(\f{g}, \bbf{B})$-representations  
\[
\BGG(\kappa): \;\; [ 0\rightarrow V_{\kappa} \rightarrow V(\kappa) \rightarrow V(w_0(\kappa)-\alpha)\rightarrow 0]
\]
called the dual BGG  complex of weight $\kappa$.   As $\bbf{B}$-representations it is identified with  the short exact sequence
\begin{equation}
\label{eqBGGdX}
0\rightarrow \kappa \otimes \bb{Q}_p[X]_{\leq k_1-k_2}\rightarrow \kappa \otimes \bb{Q}_p[X]\xrightarrow{(\frac{d}{dX})^{k_1-k_2+1}} (w_0(\kappa)-\alpha)\otimes \bb{Q}_p[X]\rightarrow 0,
\end{equation}
where $\bb{Q}_p[X] = \s{O}(\bbf{N})$. 
\proof
We have a weight decomposition of $V(\kappa)$ with respect to $\bbf{T}$
\[
V(\kappa)=  \bigoplus_{n\geq 0} (\kappa-n\alpha) \bb{Q}_p,
\]
where  $(\kappa-n\alpha)\bb{Q}_p$ is identified with $\kappa\otimes\bb{Q}_p  X^{n} $ under the isomorphism (\ref{eqBGGresolutionfirststep}).  As $V_{\kappa}$ is the irreducible representation of highest weight $\kappa$, it has a weight decomposition $V_{\kappa}\cong  \bigoplus_{0\leq n \leq k_1-k_2} (\kappa-n\alpha) \bb{Q}_p$. This shows that $V_\kappa \subset V(\kappa)$ is   identified with the inclusion $\kappa\otimes \bb{Q}_p[X]_{\leq k_1-k_2}\subset \kappa \otimes \bb{Q}_p[X]$.  As $\kappa \otimes X^{k_1-k_2+1}$ has weight $(w_0(\kappa)-\alpha)$,  the isomorphism of $\BGG(\kappa)$ with (\ref{eqBGGdX}) as $\bbf{B}$-representations is clear. 
\endproof
\end{prop}

The representation $V_{\kappa}$ has a $\bbf{B}$-filtration whose highest and lowest weight vectors are   $\bb{Q}_p(\kappa) \to V_{\kappa}$ and $V_{\kappa} \to \bb{Q}_p(w_0(\kappa))$ respectively.   Taking the associated $\GL_2$-equivariant vector bundles over $\FL$ one finds morphisms $\Psi_{-w_0(\kappa)}^{\vee}: \s{L}(w_0(\kappa)) \to \s{O}_{\FL} \otimes V_{\kappa}$ and $\Psi_{\kappa}: \s{O}_{\Fl} \otimes V_{\kappa} \to \s{L}(\kappa)$.  In Propositions \ref{PropOVBGGmaps} and \ref{CorAcompatibleV} we  interpolate these maps on neighbourhoods of $w\in W$. 
 
 \begin{prop}
\label{PropOVBGGmaps}
Let  $\epsilon \geq \delta \geq n$.  Let $(R,R^+)$ be a uniform  Tate $\bb{Q}_p$-algebra  and $\chi: T=\bbf{T}(\bb{Z}_p)\rightarrow R^{\times,+}$ a $\delta$-analytic character. 
\begin{enumerate}
\item There is a $\n{B}\cap \IW_n(\delta)$-equivariant map $\iota:  R(\chi)  \rightarrow A^{\delta}_{\chi}$ (the highest weight vector map).   It  induces a morphism of $\Iw_n$-equivariant sheaves over $U_{1}(\epsilon)\Iw_n$
\[
\Psi^{A,\vee}_{-w_0(\chi)}: \s{L}(w_0(\chi)) \rightarrow A_{\chi}^{\delta}\widehat{\otimes} \s{O}_{U_{1}(\epsilon)\Iw_n}.
\]  Dually, we have equivariant maps $D^{\delta}_{\chi}\rightarrow R(-\chi)$ and $\Psi^{D}_{-w_0(\chi)} :D_{\chi}^{\delta}\widehat{\otimes} \s{O}_{U_{1}(\epsilon)\Iw_n} \rightarrow \s{L}(-w_0(\chi))$.

\item There is a $\overline{\n{B}}\cap \IW_n(\delta)$-equivariant map $\ev_{w_0}: A^{\delta}_{\chi}\rightarrow  R(\chi)$ (the lowest weight vector quotient). Moreover,  it induces  a morphism of $\Iw_n$-equivariant sheaves over $U_{w_0}(\epsilon)\Iw_n$
\[
\Psi^{A}_{\chi} :A^{\delta}_{\chi} \widehat{\otimes} \s{O}_{U_{w_0}(\epsilon)\Iw_n} \rightarrow  \s{L}(\chi).
\] Dually, we have equivariant maps $R(-\chi ) \rightarrow D^{\delta}_{\chi}$ and  $\Psi^{D,\vee}_{\chi}:\s{L}(-\chi) \rightarrow D^{\delta}_{\chi} \widehat{\otimes} \s{O}_{U_{w_0}(\epsilon)\Iw_n}$.
\end{enumerate}

Moreover,  the morphisms of sheaves above are compatible with the $U_p$-correspondence \eqref{eqCorrespondenceFlag}.
\end{prop}

\begin{proof}
It is enough to prove the statements for the principal series. Recall that $A^{\delta}_{\chi}= \s{O}_{\widetilde{U}_{w_0}(\epsilon, \delta)}[-w_0(\chi)]$ where one takes isotypic components with respect to the left regular action of $T\n{T}(\delta)$.

\begin{enumerate}
\item   By Lemma \ref{LemIwahoriDecomposition} (3) we have an isomorphism of $R$-modules $A_{\chi}^{\delta} \cong  \s{O}( \overline{N} \overline{\n{N}}(\delta) w_0)$,  this shows that the $N\n{N}(\delta)$-invariants of  $A_{\chi}^{\delta}$ are  isomorphic to the $\n{B}\cap \IW_{n}(\delta)$-module $R(\chi)$, this provides the first arrow.  The map $\Psi^{A,\vee}_{-w_0(\chi)}: \s{L}(w_0(\chi)) \to A^{\delta}_{\chi}\widehat{\otimes} \s{O}_{U_{1}(\epsilon)\Iw_n} $ is constructed from the arrow $R(\chi)\to A^{\delta}_{\chi}$ via the presentation $U_{w_0}(\epsilon)\Iw_n= \n{B}\cap \IW_{n}(\delta)   \backslash \GLa_{2}(\epsilon,\delta) \Iw_n$.   Indeed, one has that 
\[
\s{L}(w_0(\chi))= \GLa_{2}(\epsilon,\delta)w_0\Iw_n\times^{\n{B}} R(\chi) \mbox{ and }  A_{\chi}^{\delta}\widehat{\otimes} \s{O}_{U_{1}(\epsilon)\Iw_n}= \GLa_{2}(\epsilon,\delta)w_0\Iw_n \times^{\n{B}} A_{\chi}^{\delta}.
\]

\item By Lemma \ref{LemIwahoriDecomposition} we have the presentation
\[
\widetilde{U}_{w_0}(\epsilon, \delta)\Iw_n =  N_n\n{N}(\delta) \backslash ( N_n \n{N}(\delta) \times T \n{T}(\delta) \times  \overline{N} \overline{\n{N}}(\epsilon) w_0). 
\]
Then, a straightforward  computation shows that the  evaluation map $\ev_{w_0}: A_{\chi}^{\delta} \to R(\chi)$ is  $\overline{\n{B}} \cap \IW_n(\delta)$-equivariant. The arrow  $\Psi^{A}_{\chi}: A_{\chi}^{\delta} \widehat{\otimes }\s{O}_{U_{w_0}(\epsilon)\Iw_n} \to \s{L}(\chi)$  is just the natural map induced by the global sections since $A_{\chi}^{\delta}= \Gamma(U_{w_0}(\epsilon)\Iw_n, \s{L}(\chi))$. Notice that it factors through the co-invariants of $A_{\chi}^{\delta}$ by the action of $\overline{N}_n \overline{\n{N}}(\delta)$.  
\end{enumerate}

By definition of the $U_p$-correspondence, it is enough to show that the maps $\Psi^{A,\vee}_{-w_0(\chi)}$ and $\Psi^{A}_{\chi}$ are equivariant for the action of $\varpi= \diag(1,p)$, this is clear since the multiplication by $\varpi$ on  $\s{L}(\chi)$ and $A_{\chi}^{\delta}$ is induced by the right multiplication of $\varpi$ on $\Lambda \backslash \widetilde{\Fl}$ (see Remark \ref{RemarkUpActionFlag} and Definition \ref{DefUpSheavesoverFlag}). 
\end{proof}

\begin{prop}
\label{CorAcompatibleV}
Let $\kappa \in X^*(\bbf{T})$ be a dominant character.    There are commutative diagrams of $\Iw_n$-equivariant sheaves 
  \[
   \begin{tikzcd} 
   A^{\delta}_{\kappa}\widehat{\otimes} \s{O}_{\Fl} \ar[r, "\Psi_{\kappa}^{A}"]  &  \s{L}(\kappa)  & & \s{L}(w_0(\kappa)) \ar[r, "\Psi^{A, \vee}_{-w_0(\kappa)}"] \ar[rd, "\Psi^{\vee}_{-w_0(\kappa)} "'] &  A^{\delta}_{\kappa}\widehat{\otimes} \s{O}_{\Fl}\\
   V_{\kappa}\otimes \s{O}_{\Fl} \ar[u] \ar[ur, "\Psi_{\kappa}"'] & & &  & V_{\kappa}\otimes \s{O}_{\Fl}    \ar[u]
   \end{tikzcd}
  \]
  A dual statement holds for $D^{\delta}_{\kappa}$.  Moreover, these diagrams  are compatible with the (normalized) $U_p$-correspondence  (Definition \ref{DefinitionUpCorrespondenceFlag}). 
  \proof
The commutativity of the diagrams is obvious from the definition of the maps $\Psi^{A}_{\kappa}$ and $\Psi^{A,\vee}_{-w_0(\kappa)}$ of Proposition \ref{PropOVBGGmaps},  and the fact that $\Psi^{\vee}_{-w_0(\kappa)}$  and $\Psi_{\kappa}$ are induced by the  invariants and co-invariants for the action of $\bbf{N}$ on $V_{\kappa}$ respectively. The compatibility with the (normalized) $U_p$-correspondence follows from the fact that the (normalized) action of $\varpi$ on any of the sheaves involved is induced by the right multiplication on $\Lambda \backslash \widetilde{\Fl}$ (see Remark \ref{RemarkUpActionFlag} and Definition \ref{DefUpSheavesoverFlag}).
  \endproof
\end{prop}

\begin{remark}
In  \cite[\S 4.7 and 4.8]{AIOverconvergentES2} the authors define some filtrations attached to the sheaves of modular symbols over the modular curves, it turns out that these can be constructed directly from the flag variety. Indeed, AI use the formalism of vector bundles with marked sections to define the filtrations, see \cite[Corollary 2.6]{TripleProdAI}, and the data of a vector bundle with  marked section lives over affinoid neighbourhoods of $w\in W$.  For example, let $\St$ be the standard representation of $\GL_2$ and $\St^+\subset \St$ its natural lattice, let us denote by $e_1,e_2$ the canonical basis of $\St^+$.    Over $U_1(\epsilon)$ the natural map $\s{L}^+(0,1)\to \St^+ \otimes \s{O}_{U_1(\epsilon)}$ induces an isomorphism $\s{L}^+(0,1)/p^\epsilon \cong e_1\otimes \s{O}^+_{U_1(\epsilon)}$.   Therefore, we have the data of vector bundle with marked sections $(\St^+\otimes \s{O}_{U_1(\epsilon)},  \s{L}(0,1), e_1)$.  The previous discussion shows in particular  that the map of \cite[Proposition 4.15 ii.]{AIOverconvergentES2} is $\Psi^{D}_{\chi}$. 

Following the work of Pan \cite{pan2020locally},  there is a better way to study the  filtrations above.  Let $\f{gl}_2^0= \f{gl}_2 \otimes \s{O}_{\Fl}$ be the constant Lie algebroid over $\Fl$,  and  let $\f{n}^0 \subset \f{gl}_2^0$ be the $\GL_2$-equivariant line bundle whose fiber at a point $x\in \Fl$ is $\f{n}^0(x)= \Lie x \bbf{N} x^{-1}$, i.e., the Lie algebra of the unipotent group fixing $x$.  Then $\f{n}^0$ is an ideal of $\f{gl}_2^0$, and the natural action of $\f{n}^0$ on $A^{\delta}_{\chi} \widehat{\otimes} \s{O}_{\Fl}$ induced by  derivations of $\f{gl}_2^0$ defines the filtrations of AI. 
\end{remark}


\section{Overconvergent sheaves over the modular curve}
\label{SectionOvtheoryOverModCurves}

Let  $\bb{A}^{\infty}_\bb{Q}$ and $\bb{A}^{\infty,p}_{\bb{Q}_p}$ be the rings of finite and finite prime-to-$p$ ad\`eles of $\bb{Q}$ respectively.    From now on, we fix a neat compact open subgroup $K^p\subset \GL_2(\bb{A}^{\infty,p}_{\bb{Q}})$.   Let $n\geq 0$,  we denote by $\Gamma(p^n)$, $\Gamma_1(p^n)$ and $\Gamma_0(p^n)$ the principal congruence subgroups 
\begin{gather*}
\Gamma(p^n)=\{g\in \GL_2(\bb{Z}_p) : g\equiv 1 \mod p^n\} \\ 
\Gamma_1(p^n)=\{g\in \GL_2(\bb{Z}_p) : g\equiv \left( \begin{matrix}   1 & * \\ 0 & 1 \end{matrix} \right)  \mod p^n\} \\
\Gamma_0(p^n)=\{g\in \GL_2(\bb{Z}_p) : g\equiv \left( \begin{matrix}   * & * \\ 0 & * \end{matrix} \right)  \mod p^n\}.
\end{gather*}
Let $K_p\subset \GL_2(\bb{Q}_p)$ be a compact open subgroup, we denote by $Y_{K_p}^{\alg}$  and $X_{K_p}^{\alg}$ the  affine and compact   modular curves of level $K^pK_p$ over $\Spec \bb{Q}_p$, cf.   \cite{DeligneRappLesSchemasCourbes}.   We let $Y^{\alg}(p^n)$, $Y^{\alg}_1(p^n)$ and $Y^{\alg}_0(p^n)$ be the modular curves of level $K^p\Gamma(p^n)$,  $K^p\Gamma_1(p^n)$ and $K^p\Gamma_0(p^n)$ respectively (similarly for the compact modular curves).   We let $Y_{K_p}$ and $X_{K_p}$ denote their $p$-adic analytification to adic spaces over $\Spa(\bb{Q}_p, \bb{Z}_p)$, see   \cite{HuberEtaleCohomology}.  We endow $X_{K_p}$ with the log structure defined by the cusp divisor $D= X_{K_p}\backslash Y_{K_p}$.

 Let $E^{\alg}/Y_{K_p}^{\alg}$ be the universal elliptic curve and $E^{\alg,\sm}/X_{K_p}^{\alg}$   its extension to a semi-abelian scheme.  Let $e: X_{K_p}^{\alg}\rightarrow E^{\alg, \sm}$ be the unit section and $\omega_E=e^* \Omega^1_{E^{\alg, \sm}/X^{\alg}}$ the modular sheaf.    Given an integer $k\in \bb{Z}$ we denote by  $\omega_E^{k}=\omega_E^{\otimes k}$ the  modular sheaf of weight $k$. We define the modular torsor to be the $\n{T}$-torsor over $X_{K_p}$ defined by 
 \begin{equation}
 \label{eqModTorsor}
 \n{T}_{\mo}= \underline{\Isom}( \s{O}_X,  \omega_E) \times \underline{\Isom}(\s{O}_{X} , \omega_E^{-1}).
 \end{equation}

Let $E[p^n]/Y_{K_p}$  be the  \'etale local system of $p^n$-torsion points of the universal elliptic curve.  By \cite[Theorem 4.6.1]{DiaoLogarithmic2019},  the sheaf $E[p^n]$ has a natural extension to a Kummer-\'etale  local system over $X_{K_p}$,  which by an abuse of notation we also  write as $E[p^n]$.  We let $T_pE = \varprojlim_{n} E[p^n]$ be the Tate module of $E$, seen as a pro-Kummer-\'etale local system over $X_{K_p}$.  Given $\kappa \in X^*(\bbf{T})^+$ a dominant weight we let $V_{\kappa, \et}$ denote the pro-Kummer-\'etale local system over $X_{K_p}$ attached  to the $K_p$-representation $V_{\kappa}$.  We let $R\Gamma_{\proet}(Y_{\bb{C}_p}, V_{\kappa,\et})$ and $R\Gamma_{\proet,c}( Y_{\bb{C}_p}, V_{\kappa,\et})$ be the \'etale cohomology and  the \'etale cohomology with compact support of $V_{\kappa,\et}$ respectively.

\subsection{The Hodge-Tate period map}

Let $\bb{Q}_p^{\cyc}$ be the $p$-adic completion of the $p$-adic cyclotomic field $\bb{Q}_p(\mu_{p^\infty})$.    Scholze proved in \cite{ScholzeTorsion2015} that the inverse limit $X(p^\infty)=\mbox{``} \varprojlim_n X(p^n)\mbox{''}$ has a natural interpretation as  perfectoid space.  Furthermore, he constructed a Hodge-Tate period  map $\pi_{\HT}: X(p^\infty)\rightarrow \bb{P}^1_{\bb{Q}_p}$ parametrizing the Hodge-Tate filtration of elliptic curves at  geometric points, let us briefly recall how it is constructed.  The Hodge-Tate exact sequence is the following short exact sequence of pro-Kummer-\'etale sheaves over $X_{K_p}$ 
\begin{equation}
\label{eqHTExactseq}
0 \to \omega_E^{-1}\otimes_{\s{O}}\widehat{\s{O}}(1) \to T_pE \otimes_{\bb{Z}_p} \widehat{\s{O}} \to \omega_E \otimes_{\s{O}} \widehat{\s{O}} \to 0. 
\end{equation}
On the other hand, the perfectoid space $X(p^{\infty})$ parametrizes trivializations of $T_pE$. If $\psi: \bb{Z}_p^2 \cong T_pE$ denote the  universal trivialization of the Tate module,   the pullback of \eqref{eqHTExactseq}  by $\psi$  gives rise a line subbundle of $\s{O}_{X(p^{\infty})}^{\oplus 2}$ which defines the map $\pi_{\HT}: X(p^{\infty}) \to \Fl= \bb{P}^1_{\bb{Q}_p}$.   Let us summarize the previous discussion  and some properties of $\pi_{\HT}$ in the following theorem.

\begin{theo}[Theorem III.3.18  \cite{ScholzeTorsion2015}]
\label{TheoScholzeAffineHodgeTate}
There exists a perfectoid space $X(p^\infty)$ over $\bb{Q}_p^{\cyc}$ satisfying the tilde limit property \cite[Definition 2.4.1]{scholze2013moduli}
\[
X(p^\infty)\sim  \varprojlim_n X(p^n). 
\]
Moreover, let $[x:y] \in \Fl= \bb{P}^1_{\bb{Q}_p}$ denote the projective coordinates of the projective space.  There is a $\GL_2(\bb{Q}_p)$-equivariant Hodge-Tate period map 
\[
\pi_{\HT}: X(p^\infty)\rightarrow \Fl
\]
such that for any open rational subset $U$ of $U_1= \{[x:y] | |x/y|\leq 1\}$ or $U_{\infty}= \{[x:y] | |y/x|\leq 1\}$ of $\Fl$,  the inverse image $\pi^{-1}_{\HT}(U)\subset X(p^\infty)$ is an affinoid perfectoid subspace, and there is $n>>0$ and an open affinoid $V_n\subset X(p^n)$ whose inverse image to $X(p^\infty)$ is equal to $\pi_{\HT}^{-1}(U)$. 
\end{theo}

Another feature of the Hodge-Tate period map is that it encodes the modular sheaves in terms of $\GL_2$-equivariant line bundles over $\Fl$. More precisely, let $\St$ be the standard representation, we have an exact sequence of $\bbf{B}$-modules
\[
0\to \bb{Q}_p(1,0)\to \St \to \bb{Q}_p(0,1)\to 0.
\]
Taking the associated $\GL_2$-equivariant vector bundles over $\Fl$ we have a short exact sequence 
\begin{equation}
\label{eqshortsequence1}
0\to  \s{L}(0,1)\to  \St \otimes \s{O}_{\Fl} \to  \s{L}(1,0) \to  0.
\end{equation}
Now, the map $\pi_{\HT}$ induces a $\GL_2(\bb{Q}_p)$-equivariant  morphism of ringed sites 
\[
\pi_{\HT}:  (X(p^{\infty})_{\proket}, \widehat{\s{O}}_{X} ) \to (\Fl, \s{O}_{\Fl}).
\]
In particular, we can take pullbacks of $\GL_2$-equivariant  $\s{O}_{\Fl}$-vector bundles over $\Fl$ to $\GL_2(\bb{Q}_p)$-equivariant  $\widehat{\s{O}}_X$-vector bundles over $X(p^{\infty})_{\proket}$.

\begin{convention}
Given a $K_p$-equivariant sheaf $\s{F}$ over $\Fl$, we will identify $\pi_{\HT}^*(\s{F})$ with the pro-Kummer-\'etale sheaf over $X_{K_p}$ defined by descent from the $K_p$-equivariant sheaf over $X(p^{\infty})_{\proket}$.  
\end{convention}

\begin{remark}
\label{RemarkConventionPiHT}
 Essentially by definition, the pullback of \eqref{eqshortsequence1} by  $\pi_{\HT}$ is the Hodge-Tate exact sequence \eqref{eqHTExactseq}. Our convention differs from the one of \cite[\S 4.3]{AIOverconvergentES2}, where the pullback of the standard representation by $\pi_{\HT}$ is identified with the dual of $T_pE$.  
\end{remark}

Let $\kappa\in X^*(\bbf{T})^+$ be a dominant weight and $V_{\kappa}$ the irreducible representation of $\GL_2$ of highest weight $\kappa$. Let us see $V_{\kappa}$ as a constant $\GL_2(\bb{Q}_p)$-equivariant  sheaf over $\Fl$, one has that $\pi_{\HT}^*(V_{\kappa})= V_{\kappa, \et}$ as pro-Kummer-\'etale sheaves. This implies that the pullback by $\pi_{\HT}$ of the $\GL_2$-equivariant vector bundle $V_{\kappa}\otimes \s{O}_{\Fl}$ is equal to $V_{\kappa, \et}\otimes \widehat{\s{O}}_X$.   Concerning  the $\GL_2$-equivariant line bundles over $\Fl$ one has the following result, cf.  \cite[Proposition 2.3.9]{CaraianiScholze2017}

\begin{prop}
\label{PropPullbackModtorsor}
Let $\kappa=(k_1,k_2) \in X^*(\bbf{T})$ be an algebraic weight and $\s{L}(\kappa)$ the $\GL_2$-equivariant line bundle over $\Fl$ of weight $\kappa$. There is a natural isomorphism of pro-Kummer-\'etale sheaves over $X_{K_p}$
\[
\pi_{\HT}^* (\s{L}(\kappa)) = \omega_E^{k_1-k_2}  \otimes_{\s{O}} \widehat{\s{O}} (k_2).
\]
Equivalently, let $\widetilde{\Fl}= \n{N} \backslash \GLa_2$ and $\pi_{K_p}:X(p^{\infty}) \to X_{K_p}$. We have an isomorphism of $\GL_2(\bb{Q}_p)$-equivariant $\n{T}$-torsors over $X(p^{\infty})$ 
\[
\pi_{\HT}^*(\widetilde{\Fl})= \pi_{K_p}^*(\n{T}_{\mo})(-1,0),
\]
where $\n{T}_{\mo}(-1,0)$ is a Tate twist   in the first component of the modular torsor  \eqref{eqModTorsor}.
\end{prop}
\begin{proof}
We only need to show the isomorphism of torsors, this follows from   Remark  \ref{RemarkOtherDescriptionLineBundle} (1) and the definition of the modular torsor.   In fact, since the pullback of \eqref{eqshortsequence1} is the Hodge-Tate exact sequence, one has that $\pi_{\HT}^*(\s{L}(1,0))= \omega_E^1\otimes \widehat{\s{O}}_X$ and $\pi_{\HT}^*(\s{L}(0,1))= \omega_E^{-1}\otimes \widehat{\s{O}}(1)$, the proposition follows since $ \s{L}(\kappa)= \s{L}(1,0)^{\otimes k_1}\otimes \s{L}(0,1)^{\otimes k_2}$. 
\end{proof}

\subsection{Overconvergent modular forms}

Throughout the rest of this section we will fix $n\geq 1$, we  write $X_{\infty}=X(p^{\infty})$ and $X= X_0(p^n)$.  Let $\pi_{\Iw_n}: X_{\infty}\to X$ be the natural projection and $\pi_{\HT}: X_{\infty}\to \Fl \cong \bb{P}^1_{\bb{Q}_p}$ the Hodge-Tate period map. Let $\overline{X}^{\ord}_{\infty}= \pi_{\HT}^{-1}(\Fl(\bb{Q}_p))$ be the closure of the  ordinary locus at infinite level (\cite[Lemma III.3.6.]{ScholzeTorsion2015}), and $\overline{X}^{\ord} = \pi_{\Iw_n}(\overline{X}^{\ord}_{\infty})$ the  closure of the ordinary locus of $X$. 

  Let $C^{\can}_n\subset E[p^n]$ be the canonical subgroup over $\overline{X}^{\ord}$ and $w\in W=\{1,w_0\}$.    We let $\overline{X}^{\ord}_{w}\subset \overline{X}^{\ord}$ denote the $w$-ordinary locus, i.e. the ordinary locus where $C_n^{\can}$ has relative position $w$ with  respect to the universal subgroup $H_n\subset E[p^n]$.  In other words,   $\overline{X}_{1}^{\ord}$ is the ordinary locus where $C_n^{\can}= H_n$ and $\overline{X}_{w_0}^{\ord}$ the locus where $C_n^{\can}\cap H_n=0$.  We can also  write  $\overline{X}^{\ord}_{w}=\pi_{\Iw_n}(\pi_{\HT}^{-1}(w\Iw_n))$.

Let $\epsilon \geq n \geq 1$ and $w\in W=\{1,w_0\}$.   In \S \ref{SubsectionOpenAffinoidFlag} we have defined  affinoid neighbourhoods $ \{U_w(\epsilon)\Iw_n\}_{\epsilon\geq n} $ of $w\in \Fl$.   By Theorem \ref{TheoScholzeAffineHodgeTate} their pullback to $X_{\infty}$ are affinoid perfectoids arising from some finite level modular curve $X_{K_p}$. Furthermore, as $\pi_{\HT}^{-1}(U_w(\epsilon)\Iw_n)$ is $\Iw_n$-stable, we can take $K_p=\Iw_n$. The previous discussion  leads to the following definition: 

\begin{definition}
\label{DefOpensFlag}
There exists a unique open affinoid subspace $X_{w}(\epsilon)\subset X$ such that  $\pi_{\Iw_n}^{-1}(X_{w}(\epsilon))= \pi_{\HT}^{-1}(U_w(\epsilon)\Iw_n)$. 
\end{definition}

\begin{remark}
The following properties are deduced form Lemma \ref{LemIwahoriDecomposition}. 
\begin{enumerate}

\item  $X_{w}(\epsilon') \subset X_{w}(\epsilon)$ is a strict immersion for $\epsilon'>\epsilon$.

\item $\{X_{w}(\epsilon)\}_{\epsilon\geq n}$ is a basis of strict neighbourhoods of $\overline{X}^{\ord}_{w}$, namely, $\bigcap_{\epsilon\geq n} X_{w}(\epsilon)= \overline{X}^{\ord}_w$.   
\end{enumerate}
\end{remark}

The affine modular curve $Y\subset X$ parametrizes triples $(E,H_n, \psi_N)$ where $E$ is an elliptic curve $E$, $\psi_N$ is some prime-to-$p$ level structure,  and $H_n\subset E[p^n]$ is a cyclic subgroup of order $p^n$.  
Let $\varpi=\diag(1,p)$.  In the following we study the dinamycs of the $U_p$-correspondence, cf. \cite[\S 5.3]{BoxerPilloniHigher2020}. 

\begin{definition}   The $U_p$-correspondence of $X$ is the finite    flat correspondence $C$ fitting into the diagram
\begin{equation}
\label{eqcorrespondance}
\begin{tikzcd}
 & C  \ar[ld, "p_1"'] \ar[rd, "p_2"] &  \\ X  & & X
\end{tikzcd}
\end{equation}
and parametrizing tuples $(E,H_n, \psi_{N},H')$  where $(E,H_n,\psi_N)$ defines a point in $X$,    and $H'\subset E[p]$ is a cyclic subgroup of order $p$ such that $H_n\cap H'=0$. We define  $p_1(E, H_n,\psi_N,H')=(E, H_n,  \psi_N)$ and $p_2(E, H_n ,\psi_N,H')= (E/H',\overline{H}_n ,\overline{\psi}_N)$, where $\overline{\psi}_N$ and $\overline{H}_n$ are the images of $\psi_N$ and $H_n$  in the quotient $ E/H'$.  Let  $\pi: p_1^*E\rightarrow p_2^*E$  be the universal  isogeny over $C$  and $\pi^\vee: p_2^* E\rightarrow p_1^* E$ its dual.   For a subspace $Z\subset X$ let us denote  $U_p(Z)= p_1(p^{-1}_2(Z))$ and $U_p^{t}(Z)= p_2(p_1^{-1}(Z))$.

\end{definition}
\label{SubsectionOVneighbOrdLocus}

The following Lemma uses the same strategy  of \cite[\S 4.5]{AIOverconvergentES2}, notice however that the convention on $\pi_{\HT}$ differs, see Remark \ref{RemarkConventionPiHT}.

\begin{lem}
\label{LemmaDynamicsUp}
Let $\epsilon \geq n$, the following holds
\begin{enumerate}

\item  $U_p^t(X_{1}(\epsilon)) \subset X_{1}(\epsilon+1)$ and $U_p(X_{1}(\epsilon)) \supset X_{1}(\epsilon-1)$ if $\epsilon \geq n+1$.

\item  $U_p(X_{w_0}(\epsilon)) \subset X_{w_0}(\epsilon+1)$  and $U_p^t( X_{w_0}(\epsilon)) \supset X_{w_0}(\epsilon-1)$ if  $\epsilon\geq  n+1$.

\end{enumerate}

\proof

The perfectoid modular curve $X_{\infty}$ parametrizes triples  $(E,\psi_N, (e_1,e_2))$ where $E$ is an elliptic curve, $\psi_N$ a  prime-to-$p$ level structure,   and $(e_1,e_2)$ is a basis of $T_pE$.    Let   $C_{\infty}=X_{\infty}\times_{X,p_1} C$.  The perfectoid curve  $C_{\infty}$  parametrizes  $(E, \psi_N,(e_1,e_2), H' )$ where $(E,\psi_N, (e_1,e_2))\in X_\infty$  and $H'\subset E[p^n]$ is a cyclic subgroup of oder $p$ such that $\langle   \overline{e}_1 \rangle\cap H'=0 \mod p $.  Write $C_{\infty}= \bigsqcup_{a\in \bb{F}_p}  C_{\infty,a}$  with $C_{\infty,a}$ the locus where  $H'=\langle \overline{ e}_2+a \overline{e}_1\rangle$.  Note that the map $p_1:C_{\infty,a}\rightarrow X_{\infty}$ is an isomorphism for all $a$.  We have a diagram 
\[
\begin{tikzcd}
 & \bigsqcup_{a=0}^{p-1}  C_{\infty,a} \ar[ld, "p_1"'] \ar[rd, "p_2"] & \\ X_{\infty} & & X_{\infty} 
\end{tikzcd}
\] 
with $p_1(E,\psi_N, (e_1,e_2), H')=(E,\psi_N, (e_1,e_2))$ and $p_2(E,\psi_N, (e_1,e_2), H')= (E/H', \overline{\psi}_N, (\pi(e_1), \tilde{e}_2))$,   such that the restriction of $p_2$ to $C_{\infty,a}$ is given by  $\tilde{e}_2= \frac{1}{p}(\pi(e_2)+a\pi(e_1))$  for $0 \leq a <p$ lifting $a$. Let $U_{p,a}:=\left( \begin{matrix}
1 & -a \\ 0 & p
\end{matrix} \right)$.  Composing with the Hodge-Tate period map $\pi_{\HT}: X_{\infty}\rightarrow \Fl$ we have  a commutative diagram  of correspondences (see diagram \eqref{eqCorrespondenceFlag})
\begin{equation}
\label{eqMapsCorrInfinite}
\begin{tikzcd}[row sep = 5pt]
 & \bigsqcup_a  C_{\infty,a} \ar[ld, "p_1"'] \ar[rd, "p_2"] \ar[dd] & \\ X_{\infty}  \ar[dd, "\pi_{\HT}"'] & & X_{\infty}  \ar[dd,  "\pi_{\HT}"] \\ 
  & \bigsqcup_{a=0}^{p-1} \Fl_a \ar[rd, "p_2"] \ar[ld, "p_1"'] &  \\
 \Fl & & \Fl
\end{tikzcd}
\end{equation}
where the map $p_{2,a}: \Fl_a \to \Fl$ is the  right multiplication by $U_{p,a}^{-1}$. Indeed, if $e_1,e_2$ is the canonical basis of the standard representation of $\GL_2$ we have that $p_{2,a}^*(e_1)=e_1$ and $p_{2,a}^*(e_2)= \frac{1}{p}(e_2+ae_1)$.  The Lemma follows by Lemma \ref{LemCorrespondenceFlag} and the definition of the affinoids $X_w(\epsilon)$.\endproof
\end{lem}

\begin{remark}
\label{RemarkCorresFiniteLevel}
Taking $\Iw_n$-quotients in \eqref{eqMapsCorrInfinite} one obtains the morphism of correspondences 
\[
\begin{tikzcd}[row sep = 5pt]
 & C \ar[ld, "p_1"'] \ar[rd, "p_2"] \ar[dd] & \\ X  \ar[dd, "\pi_{\HT}"'] & & X  \ar[dd,  "\pi_{\HT}"] \\ 
  & (\bigsqcup_{a=0}^{p-1} \Fl_a)/\Iw_n \ar[rd, "p_2"] \ar[ld, "p_1"'] &  \\
 \Fl/\Iw_n & & \Fl/\Iw_n,
\end{tikzcd}
\]
where the bottom correspondence is the one of \eqref{eqCorrespondenceStack}. Therefore, the $U_p$-correspondence of $X$ is simply the pullback of the $U_p$-correspondence of the stack $\Fl/\Iw_n$. 
\end{remark}

Let $\n{T}^{0} \subset \n{T}$ be the affinoid bounded torus given by the generic fiber of the $p$-adic completion of $\bbf{T}$.  Let $\n{T}_{\mo,\et}$ be the base change of $\n{T}_{\mo}$ to a $\n{T}$-torsor over the \'etale site of $X$.  In order to construct overconvergent modular sheaves we have to find refinements of the torsor $\n{T}_{\mo}$.   It turns out that the torsor  $\n{T}_{\mo}$  admits an  integral reduction to an  \'etale torsor:  
\begin{theo}[{\cite[\S 4.6]{BoxerPilloniHigher2020}} ]
\label{TheointTorsor}
Let $\widetilde{\Fl}^0= \n{N}^0 \backslash \GLa_2^{0}$ be the natural $\GLa_2^0$-equivariant  $\n{T}^0$-torsor over $\Fl$.  There exists an \'etale $\n{T}^0$-torsor  $\n{T}^0_{\mo,\et}$ over  $X$ such that 
\[
\n{T}_{\mo,\et} = \n{T} \times^{\n{T}^0} \n{T}^0_{\mo,\et} \mbox{ and }
\pi_{\HT}^*(\widetilde{\Fl}^0)= \pi_{\Iw_n}^*(\n{T}_{\mo, \et}^0) (-1,0).
\] 
\end{theo}
\begin{remark}
The existence of the integral torsor holds in greater generality for any Shimura variety. The  theorem follows from the fact that $\pi_{\HT}^*(\widetilde{\Fl}^0)$ is a $\GL(\bb{Z}_p)$-equivariant open subspace of the  twisted torsor $\pi_{\Iw_n}(\n{T}_{\mo}) (-1,0)$ over $X_{\infty}$, it follows that this open suspace descends to some finite level providing, locally \'etale on $X$, an integral trivialization of $\n{T}_{\mo}$. 
\end{remark}
 
The following definition is justified by the proof of Theorem \ref{TheointTorsor}, see \cite[Proposition 4.6.12]{BoxerPilloniHigherColeman2020} or \cite[Proposition 5.15]{BoxerPilloniHigher2020}. 

\begin{definition}
\label{PropTdeltatorsor}
Let $\epsilon\geq \delta \geq n \geq 1$ and $w\in W=\{1,w_0\}$.  Consider the $\Iw_n$-equivariant  $T\n{T}(\delta)$-torsor of Definition \ref{DefinitionReductionTorsorFlag}
\[
\widetilde{U}_w(\epsilon,\delta)\Iw_n \rightarrow U_{w}(\epsilon)\Iw_n. 
\]
The restriction of $\n{T}_{\mo,\et}^0$ to $X_{w}(\epsilon)$ admits a  reduction to  an \'etale $T\n{T}(\delta)$-torsor $\n{T}_{\mo}(\delta)$  determined by the equality 
\begin{equation}
\label{eqTorsorpullbackIW}
\pi_{\HT}^*(\widetilde{U}_{w}(\epsilon, \delta)\Iw_n) = \pi_{\Iw_n}^*( \n{T}_{\mo}(\delta))(-1,0)
\end{equation}
as open subspaces of $\pi_{\HT}^*(\widetilde{\Fl})=\pi_{\Iw_n}^*( \n{T})(-1,0)$.
\end{definition}

\begin{definition}
\label{DefOvModsheaf}
Let $(R,R^+)$ be a uniform affinoid Tate $\bb{Q}_p$-algebra, and $\chi: T=\bbf{T}(\bb{Z}_p)\rightarrow R^{+,\times}$ a $\delta$-analytic character. Let $\s{O}_{\n{T}_{\mo}(\delta)}$ be the algebra of regular functions of $\n{T}_{\mo}(\delta)$, seen as an \'etale Banach  $\s{O}_X$-algebra over $X_w(\epsilon)$. The sheaf of overconvergent modular forms of weight $\chi$ is given by 
\begin{align*}
\omega_E^{\chi} & = \s{O}_{\n{T}_{\mo}(\delta)} \widehat{\otimes }R  [-w_0(\chi)] \\
				& = \{f \in \s{O}_{\n{T}_{\mo}(\delta)} \widehat{\otimes} R :  f(tx) = w_0(\chi)(t) f(x) \mbox{ for } t\in T\n{T}(\delta)\}. 
\end{align*}
\end{definition}

\begin{remark}
\label{RemarkOvTorsorFromAnalytic}
In \cite[Proposition 4.6.15]{BoxerPilloniHigherColeman2020} it is shown that the torsor $\n{T}_{\mo}(\delta)$ is  trivial  in a   finite \'etale covering of  $X_w(\epsilon)$. This implies that the \'etale sheaf $\omega_{E}^{\chi}$ is locally in the \'etale topology an orthonormalizable $\s{O}_X$-sheaf, and equal to  the  pullback to the \'etale site of an   $\s{O}_{X_w(\epsilon)}\widehat{\otimes}R$-line bundle over the analytic site of $X_w(\epsilon)$. 
\end{remark}

From Definition \ref{PropTdeltatorsor} we deduce the following overconvergent analogue of Proposition \ref{PropPullbackModtorsor}.

\begin{cor}
\label{PropLtoOmega}
Let $(R,R^+)$ and $\delta$ be as in Definition \ref{DefOvModsheaf}, write  $\chi=(\chi_1,\chi_2)$.  Let $\chi_{\cyc}: G_{\bb{Q}_p}\rightarrow \bb{Z}_p^\times$ be the cyclotomic character and $\chi_2\circ \chi_{\cyc}: G_{\bb{Q}_p}\rightarrow  R^{+,\times}$ its composition with $\chi_2$.  We set  $\widehat{\s{O}}_{X}(\chi_2):= R( \chi_2\circ \chi_{\cyc}) \widehat{\otimes} \widehat{\s{O}}_X$.  There is a Galois equivariant isomorphism of pro-Kummer-\'etale  sheaves over $X_{w}(\epsilon)$
\[
\pi_{\HT}^*(\s{L}(\chi)) = \omega_E^{\chi} \widehat{\otimes}_{R\widehat{\otimes } \s{O}_X} \widehat{\s{O}}_X(\chi_2).  
\]
\end{cor}

We can finally define the overconvergent modular forms and the overconvergent cohomology classes appearing in  higher Coleman theory.   We refer to \cite{UrbanEigenvar} for the notion of perfect Banach complexes and compact operators of perfect Banach complexes.   See \cite[\href{https://stacks.math.columbia.edu/tag/0A39}{Tag 0A39}]{stacks-project} for the definition of cohomology with supports  in a closed subspace. 

\begin{definition}
\label{DefiSupportOvcohomologies}
 Let $w\in W=\{1,w_0\}$ and let  $\s{F}$ be a sheaf over $X_w(\epsilon)_{\an}$. Denote $X_w(>\epsilon):= \bigcup_{\epsilon'>\epsilon} X_w(\epsilon)$.   We define the cohomology complexes 
 \[
 R\Gamma_w(X, \s{F})_{\epsilon}:= R\Gamma_{\an}(X_{w}(\epsilon), \s{F}) \mbox{ and } R\Gamma_{w,c}(X, \s{F})_{\epsilon}:= R\Gamma_{\an, \overline{X_w(>\epsilon+1)}}(X_{w}(\epsilon), \s{F}). 
 \]
Set $H^0_w(X,\s{F}):=H^0( R\Gamma_w(X, \s{F})_{\epsilon})$ and  $H^1_{w,c}(X,\s{F})=H^1(R\Gamma_{w,c}(X, \s{F})_{\epsilon})$.   When $\s{F}= \omega_E^{\chi}$, we call $H^0_w(X, \omega_E^{\chi})_{\epsilon}$ and $H^1_{w,c}(X, \omega_W^{\chi})_{\epsilon}$ the space of overconvergent modular forms  and the overconvergent cohomology with compact support of weight $\chi$ respectively. 
\end{definition}

\subsubsection{Hecke operators}  We end this section with the definition of the $U_p$-operators  for the overconvergent modular forms.  First,  let us recall the definition for the classical modular sheaves.  Let   $X   \xleftarrow{p_1} C\xrightarrow{p_2} X$ be the $U_p$-correspondence. We let $\pi: p_1^*E \rightarrow p_2^* E$ be the universal isogeny over $C$ and $\pi^{\vee}:  p_2^* E \rightarrow p_1^* E$ its dual. We denote by $\pi^*: p_2^* \omega_E \rightarrow p_1^* \omega_E$ and $\pi_*: p_1^* \omega_E^{-1}\rightarrow p_2^* \omega_E^{-1}$ the pullback and pushforward maps of $\pi$ (resp. for $\pi^\vee$).      For a quasicoherent sheaf $\s{F}$ over $X$ we let $\Tr_{p_i}:  p_{i,*} p^*_i \s{F}\rightarrow \s{F}$ be the trace map of  $p_i$.    Let $\kappa= (k_1,k_2)\in X^*(\bbf{T})$ be a character  of $\bbf{T}$,  recall that we have made the convention $\omega_E^{\kappa}= \omega_E^{k_1}\otimes \omega_E^{-k_2}= \omega_E^{k_1-k_2}$.

\begin{definition}
\label{DefUpOperatorclassical}
 The Hecke operator $U_{p,\kappa}$  acting over  $R\Gamma_{\an}(X, \omega_E^{\kappa})$ is the composition 
\[
\begin{tikzcd}
R\Gamma_{\an}(X, \omega_E^\kappa) \ar[r, "p_2^*"]& R\Gamma_{\an}(C, p_2^*\omega_E^\kappa ) \xrightarrow{(\pi^{\vee,*,-1})^{\otimes k_1} \otimes (\pi_*^{-1})^{\otimes k_2} } R\Gamma_{\an}(C, p_1^*\omega_E^\kappa) \ar[r, "\Tr_{p_1}"]& R\Gamma_{\an}(X,\omega_E^{\kappa}).
\end{tikzcd}
\]
We define the $U^{t}_{p,\kappa}$ operator by shifting the roles of $p_1$ and $p_2$, and by composing with the map $( \pi^{\vee,*})^{\otimes k_1}\otimes (\pi_*)^{\otimes k_2}$. 
\end{definition}

\begin{remark}
\label{RemarkUpnaive}
The $U_{p,\kappa}$ above is equal to the operator $p^{-k_1}U_{p,k_1-k_2}^{\naive}$ of \cite{BoxerPilloniHigher2020}. Indeed,  $(\pi^{\vee,*,-1})^{\otimes k_1}= p^{-k_1}( \pi^*)^{\otimes k_1}$ and $(\pi_*^{-1})^{\otimes k_2}= (\pi^*)^{\otimes - k_2}$.   In other words,  $U_{p,k}^{\naive}=U_{p, (0,-k)}$.
\end{remark}

Let us explain the normalization of the Hecke operator $U_{p,\kappa}$ of the  previous definition, it turns out that it is induced from the $U_{p,\kappa}$-correspondence of the sheaf $\s{L}(\kappa)$ over $\Fl$, see Definition \ref{DefUpSheavesoverFlag} (1).  Recall that the Hodge-Tate exact sequence 
\[
0 \to \omega^{-1}_E\otimes \widehat{\s{O}}_X(1) \to T_p E \otimes \widehat{\s{O}} \to \omega_E \otimes \widehat{\s{O}} \to 0
\]
 is the pullback by $\pi_{\HT}$ of the exact sequence of $\GL_2$-equivariant vector bundles 
 \[
 0 \to \s{L}(0,1)\to \St \otimes \s{O}_{\Fl} \to \s{L}(1,0)\to 0.  
 \]
By the proof of the  Lemma \ref{LemmaDynamicsUp}, the $U_p$-correspondence of $X$ at level $X_{\infty}$ commutes with the $U_p$-correspondence of $\Fl$ defined in \eqref{eqCorrespondenceFlag}. Then, the natural $U_p$-correspondence of $\pi_{\HT}^* \St \otimes \s{O}_{\Fl}= T_pE \otimes \s{O}_{X_{\infty}} $ induced by the $\GL_2(\bb{Q}_p)$-equivariant structure of $\St$ is compatible with the natural $U_p$-correspondence of $\pi_{\HT}^*(\n{L}(0,1))= \omega_E^{-1}\otimes \s{O}_{X_{\infty}}(1)$ and $\pi_{\HT}^*(\n{L}(1,0))= \omega_E\otimes \s{O}_{X_\infty}$.  But the correspondence of $T_pE$ is just the natural isogeny $\pi: p_1^* (T_pE) \to p_2^*( T_pE)$ and we have a commutative diagram 
\[
\begin{tikzcd}
 0 \ar[r]& p_1^*(\omega^{-1}_E) \otimes \s{O}_{X_\infty}(1)\ar[r] \ar[d,"\pi_*"] &  p_1^*(T_p E) \otimes \s{O}_{X_\infty} \ar[r] \ar[d,"\pi"]& p_1^*(\omega_E) \otimes \s{O}_{X_\infty} \ar[r] \ar[d,"\pi^{\vee,*}"] & 0 \\
  0  \ar[r] & p_2^*(\omega^{-1}_E) \otimes \s{O}_{X_\infty}(1) \ar[r] &  p_2^*(T_p E) \otimes \s{O}_{X_\infty} \ar[r ]& p_2^*(\omega_E) \otimes \s{O}_{X_\infty} \ar[r] & 0.
\end{tikzcd}
\] Then the natural $U_p$ correspondences of $\omega_E\otimes \s{O}_{X_\infty}$ and $\omega_E^{-1}\otimes \s{O}_{X_\infty}(1)$, defined by $\s{L}(1,0)$ and $\s{L}(0,1)$ respectively, are given by the maps 
\[
\pi^{\vee,*}: p_1^* \omega_E \to p_2^* \omega_E \mbox{ and } \pi_*: p_1^* \omega_E^{-1} \otimes \s{O}_{X_\infty}(1) \to p_2^* \omega_E^{-1} \otimes \s{O}_{X_\infty}(1).
\]

With the previous explanation in mind we can now define the $U_p$-correspondence for the overconvergent modular forms.  We refer to \cite[Proposition 5.8]{BoxerPilloniHigher2020} for the details.

\begin{definition}
\label{DefUpOverconvergentModular}
Consider the $U_p$-correspondence \eqref{eqcorrespondance} and its restriction to the overconvergent neighbourhoods of $\overline{X}_w^{\ord}$ (see Lemma \ref{LemmaDynamicsUp}): 
\[
\begin{tikzcd}[column sep = 0.1 cm]
 & p_2^{-1}(X_{w_0}(\epsilon+1))  \ar[rd, "p_2"] \ar[ld, "p_1"'] &        &  & p_1^{-1}(X_{1}(\epsilon-1))  \ar[rd,"p_2"] \ar[ld,  "p_1"']&  \\ 
 X_{w_0}(\epsilon+1) &  & X_{w_0}(\epsilon)    &   X_{1}(\epsilon-1)  & &  X_{1}(\epsilon).   
\end{tikzcd}
\]
\begin{enumerate}

\item We define the maps 
\begin{eqnarray*}
U_p:p_2^* \omega_E^{\chi} \to p_1^* \omega_E^{\chi}   \mbox{ and }  
U_p^t:p_1^* \omega_E^{\chi} \to p_2^* \omega_E^{\chi} 
\end{eqnarray*}
to be the unique maps whose base change  to $\widehat{\s{O}}_X$-modules  coincide with the pullback by $\pi_{\HT}$ of the $U_p$-correspondence of Definition \ref{DefUpSheavesoverFlag} (2).  

\item The $U_p$ operator on $R\Gamma_1(X, \omega_E^{\chi})_{\epsilon}$ and $R\Gamma_{w_0,c}(X, \omega_E^{\chi})_{\epsilon}$ is the one induced by the map $U_p$  above.

\item  The $U_p^t$ operator on $R\Gamma_{w_0}(X, \omega_E^{\chi})_{\epsilon}$ and $R\Gamma_{1,c}(X, \omega_E^{\chi})_{\epsilon}$ is the one induced by the map $U_p^t$  above.

\end{enumerate}
\end{definition} 
 
In order to state the classicality result we need to normalize the $U_p$-operators.

\begin{definition}
\label{DefiGoodNormalizations}
Let $(R,R^+)$ be a uniform Tate $\bb{Q}_p$-algebra and  $\chi: T\rightarrow R^{+,\times}$ a $\delta$-analytic character. Let $\kappa\in X^*(\bbf{T})$.  We define  normalizations of $U_p$ and $U_p^t$ 
\begin{align*}
U_p^{good}= \begin{cases}
\frac{1}{p} U_{p} & \mbox{ over } R\Gamma_1(X,\omega_E^{\chi})_{\epsilon}, \\
U_{p} & \mbox{ over } R\Gamma_{w_0,c}(X,\omega_E^{\chi})_{\epsilon} \\
 p^{-\min\{ 1-k_1,-k_2\}} U_{p,\kappa}&  \mbox{ over } R\Gamma_{\an}(X, \omega^k)
\end{cases},  \\  
U_p^{t,good}= \begin{cases}
\frac{1}{p} U_{p}^t & \mbox{  over } R\Gamma_{w_0}(X,\omega_E^{\chi})_{\epsilon}, \\ 
U_{p}^t & \mbox{ over } R\Gamma_{1,c}(X,\omega_E^{\chi})_{\epsilon} \\
 p^{-\min\{1-k_1,-k_2\}}U_{p,\kappa}^{t} & \mbox{ over } R\Gamma_{\an}(X, \omega^\kappa_E).
\end{cases}
\end{align*}
\end{definition}

We shall need the following classicality theorem for overconvergent cohomologies. 

\begin{theo}[{\cite[Theorem 5.13]{BoxerPilloniHigher2020}}]
\label{TheoClassicityModforms}
Let $\kappa=(k_1,k_2)\in X^*(\bbf{T})$ be an algebraic weight.   
\begin{enumerate}

\item The $U_p^{good}$-operator has slopes $\geq 0$ on $H^0_1(X,\omega_E^{\kappa})_{\epsilon}$ and  $H^1_{w_0,c}(X,\omega_E^{\kappa})_{\epsilon}$.

\item The $U_p^{t,good}$ operator has slopes $\geq 0$ on $H^0_{w_0}(X,\omega_E^\kappa)_{\epsilon}$  and $H^1_{1,c}(X,\omega_E^\kappa)_{\epsilon}$.
\end{enumerate}
Futhermore,  we have isomorphisms of small slope  cohomologies 
\begin{eqnarray*}
H^0_1(X,\omega_E^\kappa)_{\epsilon}^{U_p^{good}<k_1-k_2-1}  = H^0_{\an}(X, \omega_E^{\kappa})^{U_p^{good}< k_1 -k_2-1}, \\  H^1_{w_0,c}(X,\omega_E^\kappa)_{\epsilon}^{U_p^{good}<1+k_2-k_1} =H^1_{\an}(X, \omega_E^{\kappa})^{U_p^{good}<1+k_2-k_1}, \\
H^0_{w_0}(X,\omega_E^\kappa)_{\epsilon}^{U_p^{t,good}<k_1-k_2-1}=  H^0_{\an}(X, \omega_E^{\kappa})^{U_p^{t,good}<k_1-k_2-1},  \\  H^1_{1,c}(X,\omega_E^\kappa)_{\epsilon}^{U_p^{t,good}<1+k_2-k_1} = H^1_{\an}(X, \omega_E^{\kappa})^{U_p^{t,good}<1+k_2-k_1} .
\end{eqnarray*}
\end{theo}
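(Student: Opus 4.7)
The plan splits into two independent assertions: the integrality bounds (1) and (2), and the small-slope classicality isomorphisms.

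For (1) and (2), I would argue by a direct unwinding of the preceding lemma. On $R\Gamma_1(X,k)$, using that $U_p = p^{k_1}U_p^{\mathrm{naive}}$ on $\omega_E^k|_{X_1(\epsilon)}$, one has $U_p^{\mathrm{good}} = p^{-1}U_p = p^{k_1-1}U_p^{\mathrm{naive}}$; combining with the lemma's bound $\mathrm{slope}(U_p^{\mathrm{naive}}) \geq 1-k_1$ on $R\Gamma_{1,\epsilon}(X,k)$ yields $\mathrm{slope}(U_p^{\mathrm{good}}) \geq 0$. On $R\Gamma_{w_0,c}(X,k)$, $U_p^{\mathrm{good}} = U_p = p^{k_2}U_p^{\mathrm{naive}}$ together with the bound $\geq -k_2$ again gives $\geq 0$. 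The two assertions for $U_p^{t,\mathrm{good}}$ are strictly symmetric.

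For the classicality isomorphisms, the strategy is to use the cohomology-with-supports (excision) triangle
\[
R\Gamma_{w_0,c,\epsilon}(X,k) \longrightarrow R\Gamma(X,\omega_E^k) \longrightarrow R\Gamma\bigl(X \setminus \overline{X_{w_0}(>\epsilon+1)},\,\omega_E^k\bigr) \xrightarrow{+}
\]
and its analog with the roles of $1$ and $w_0$ exchanged. The first map is the correstriction, Hecke-equivariant for $U_p^{\mathrm{naive}}$ and hence for $U_p^{\mathrm{good}}$ up to the explicit power of $p$ encoding the difference between the two normalizations. Passing to $\mathrm{Rlim}$ (resp. $\hocolim$) in $\epsilon$ produces the comparison with $R\Gamma_{w_0,c}(X,k)$ (resp. $R\Gamma_1(X,k)$). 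The theorem will follow once we establish that the boundary term $R\Gamma(X \setminus \overline{X_{w_0}(>\epsilon+1)},\omega_E^k)$ is annihilated by the appropriate small-slope projector for the stated bounds; the $H^1$ cases then arise by the same argument applied to the dual triangle.

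The hard part is this slope estimate on the boundary cohomology, which is essentially Coleman's analytic continuation bound in disguise. The complement $X \setminus \overline{X_{w_0}(>\epsilon+1)}$ decomposes as a union of an overconvergent neighborhood $X_1(\epsilon')$ with a thin annular halo, and the dynamics of Lemma~\ref{LemmaDynamicsUp} show that iteration of $U_p$ drags points of the halo strictly deeper into $X_1$. Under this contraction, the $X_1$-normalization $p^{-1}U_p = p^{k_1-1}U_p^{\mathrm{naive}}$ differs from the natural $X_{w_0}$-normalization $p^{k_2}U_p^{\mathrm{naive}}$ by exactly $p^{k_1-k_2-1}$, so each application of $U_p^{\mathrm{good}}$ contributes a slope gain of at least $k_1-k_2-1$ on the halo. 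A Riesz-theoretic argument of the type developed in Section~\ref{SubsectionSpectralTheory}, combined with the compactness of the restriction maps (Lemma~\ref{LemmaCorREs}), then forces the boundary cohomology to have $U_p^{\mathrm{good}}$-slopes $\geq k_1-k_2-1$, vanishing on small slope and collapsing the triangle to the desired isomorphism. The symmetric estimates for $U_p^{t,\mathrm{good}}$ and the $H^1$-bounds $<1+k_2-k_1$ follow by swapping $(1,w_0)$ and $(U_p,U_p^t)$. Full details of the halo analysis are carried out in \cite{BoxerPilloniHigher2020}, of which our statement is a direct translation.
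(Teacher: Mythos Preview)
Your proposal is correct and in fact more detailed than the paper's own treatment: the paper states Theorem~\ref{TheoClassicityModforms} without proof, relying on the preceding lemma (cited from \cite[Lemma~5.3]{BoxerPilloniHigher2020}) for the slope bounds and on \cite{BoxerPilloniHigher2020} itself for the classicality isomorphisms. Your derivation of (1) and (2) from the normalizations $U_p = p^{k_1}U_p^{\mathrm{naive}}$ over $X_1(\epsilon)$ and $U_p = p^{k_2}U_p^{\mathrm{naive}}$ over $X_{w_0}(\epsilon)$ is exactly the intended computation, and your excision-triangle sketch for the classicality part, with the halo slope estimate deferred to \cite{BoxerPilloniHigher2020}, is the standard argument the paper is implicitly invoking.
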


\subsection{Overconvergent modular symbols}
\label{SubsectionOVmodSym}

Let $(R,R^+)$ be a uniform affinoid Tate algebra over $\bb{Q}_p$, and $\chi: T \to R^{+,\times}$ a $\delta$-analytic character.  Let $A_{\chi}^{\delta}$ and $D_{\chi}^{\delta}=\Hom^0_{R}(A^{\delta}_{\chi}, R)$ be the principal series and distributions of Definition \ref{DefPrincipalSeriesDist}. These are topological $\bb{Q}_p$-vector spaces, $A^{\delta}_{\chi}$ being a Banach space and $D_{\chi}^{\delta}$ endowed with the weak topology. Consider the constant $\Iw_n$-equivariant  quasi-coherent sheaves $A_{\chi}^{\delta} \widehat{\otimes} \s{O}_{\Fl}$ and $D_{\chi}^{\delta}\widehat{\otimes} \s{O}_{\Fl}$ over $\Fl$, where the completed tensor products are as in Definition \ref{DefinitionTensor}.  Their pullbacks by $\pi_{\HT}$ are identified with pro-Kummer-\'etale sheaves $A_{\chi, \et}^{\delta}\widehat{\otimes} \widehat{\s{O}}_X$ and $D_{\chi,\et}^{\delta} \widehat{\otimes} \widehat{\s{O}}_X$, where $A_{\chi, \et}^{\delta}$ and $D_{\chi,\et}^{\delta}$ are the 
 sheaves over $X_{\proket}$ obtained by descent from the $\Iw_n$-equivariant constant sheaves over $X_{\infty}$ induced by the corresponding topological $\Iw_n$-modules. 

Before introducing the spaces of overconvergent modular symbols, let us define the pro\'etale cohomology with compact support. 

\begin{definition}
\label{DefinitionCohoCompactSupports}
Let $\s{F}$ be a pro\'etale sheaf over $Y=Y_0(p^n)$, and let $j_{\proket}: Y_{\proet}\to X_{\proket}$ be the natural morphism of sites. The pro\'etale cohomology with compact support of $\s{F}$ is the complex 
\[
R\Gamma_{\proet,c}(Y_{\bb{C}_p}, \s{F})= R\Gamma_{\proket}(X_{\bb{C}_p}, j_{\proket,!}\s{F}).
\]
\end{definition}

\begin{remark}

Let $j_{\proet}: Y_{\proet}\to X_{\proet}$ be the natural morphism of sites and  let $\bb{L}$ be a pro\'etale  $\bb{Z}_p$-local system over $Y$. The pro\'etale cohomology with compact support of $\bb{L}$ is usually defined as  $R\Gamma_{\proet, c}(Y_{\bb{C}_p},  \bb{L}):= R\Gamma_{\proet}(X_{\bb{C}_p},  j_{\proet,!} \bb{L})$. On the other hand,  Lemma 4.4.27 of  \cite{DiaoLogarithmic2019} implies that this cohomology  can be computed in the pro-Kummer-\'etale site, i.e.  that we have a quasi-isomorphism 
\[
R\Gamma_{\proet}(X_{\bb{C}_p}, j_{\proet, !}\bb{L})= R\Gamma_{\proket}(X_{\bb{C}_p},  j_{\proket,!} \bb{L}).  
\]
In other words, if $\s{F}=\bb{L}$ is a  pro\'etale local system over $Y$. The cohomology with compact support of Definition \ref{DefinitionCohoCompactSupports} coincides with the classical one. 
\end{remark}

\begin{definition}
We define the overconvergent modular symbols as the cohomology complexes 
\[
R\Gamma_{\proet}(Y_{\bb{C}_p}, A^{\delta}_{\chi,\et}) \mbox{ and  } R\Gamma_{\proet}(Y_{\bb{C}_p}, D^{\delta}_{\chi,\et} ). 
\]
We also define the overconvergent modular symbols  with compact support in the obvious way. 
\end{definition}

\begin{remark}
By purity on $p$-torsion local systems \cite[Theorem 6.4.1]{DiaoLogarithmic2019} and  the devisage of $A_{\chi}^{\delta,+}$  and $D_{\chi}^{\delta,+}$ of Corollary \ref{corDevisageSheavesAetD},   one has quasi-isomorphisms 
\begin{eqnarray*}
R\Gamma_{\proet}(Y_{\bb{C}_p},  A^{\delta}_{\chi,\et})  & = &  R\Gamma_{\proket}(X_{\bb{C}_p},  A^{\delta}_{\chi,\et})  \\
R\Gamma_{\proet}(Y_{\bb{C}_p},  D^{\delta}_{\chi,\et})   & = &  R\Gamma_{\proket}(X_{\bb{C}_p},  D^{\delta}_{\chi, \et}).
\end{eqnarray*}
\end{remark}

The primitive comparison theorem also applies for the modular symbols as follows: 

\begin{lem}
Let $\iota:D \subset X$ be the cusp divisor endowed with the log-structure induced by $X$. Let $\iota: D_{\proket}\to X_{\proket}$ be the natural morphism, and let $\widehat{\s{I}}^{+}= \ker( \widehat{\s{O}}_{X}^{+} \to \iota_* \widehat{\s{O}}^{+}_{D})$ be the ideal defining the cusps. Then $(j_{\proket,!}A^{\delta,+}_{\chi,\et})\widehat{\otimes} \widehat{\s{O}}^+_X= A^{\delta,+}_{\chi,\et}\widehat{\otimes }\widehat{\s{I}}^+$.   Furthermore, we have almost quasi-isomorphisms 
\begin{equation}
\label{eqPrimitiveComparisonAetD}
\begin{aligned}
R\Gamma_{\proket}(X_{\bb{C}_p}, A^{\delta,+}_{\chi,\et})\widehat{\otimes} \n{O}_{\bb{C}_p} &=^{a}& R\Gamma_{\proket}(X_{\bb{C}_p}, A^{\delta,+}_{\chi,\et} \widehat{\otimes}  \widehat{\s{O}}^+_X) \\ 
R\Gamma_{\proket}(X_{\bb{C}_p}, j_{\proket,!} A^{\delta,+}_{\chi,\et})\widehat{\otimes} \n{O}_{\bb{C}_p} &=^{a}& R\Gamma_{\proket}(X_{\bb{C}_p}, A^{\delta,+}_{\chi,\et} \widehat{\otimes}  \widehat{\s{I}}^+).
\end{aligned}
\end{equation}
 The analogous statement holds for the sheaf $D_{\chi,\et}^{\delta}$. 
\end{lem}
\begin{proof}
We only give the proof for $A_{\chi,\et}^{\delta,+}$ the other being identical. By Corollary \ref{corDevisageSheavesAetD} we can write $A_{\chi,\et}^{\delta,+}= R\varprojlim_{i}( \varinjlim_{j} \bb{L}_{i,j})$, where $\bb{L}_{i,j}$ are finite  Kummer-\'etale local systems over $X$. Then, by the primitive comparison theorem for finite local systems one has that 
\begin{eqnarray*}
R\Gamma_{\proket}(X_{\bb{C}_p}, A^{\delta,+}_{\chi,\et})\widehat{\otimes} \n{O}_{\bb{C}_p} & = & R\varprojlim_{i} \hocolim_{j} (R\Gamma_{\ket}(X_{\bb{C}_p}, \bb{L}_{i,j}) \otimes    \n{O}_{\bb{C}_p})  \\ 
& =^{a} &   R\varprojlim_{i} \hocolim_{j} (R\Gamma_{\ket}(X_{\bb{C}_p}, \bb{L}_{i,j}\otimes \s{O}^+_X))   \\ 
& = & R\Gamma_{\proket}(X_{\bb{C}_p}, A^{\delta,+}_{\chi,\et} \widehat{\otimes}  \widehat{\s{O}}^+_X). 
\end{eqnarray*} 

To prove the case of cohomology with compact support we need the following observation. Consider the exact sequence  of Kummer-\'etale sheaves over $X$
\begin{equation*}
0 \to j_{\ket,!}\bb{Z}/p^s \to \bb{Z}/p^s \to \iota_* \bb{Z}/p^s \to 0.
\end{equation*}
Tensoring with $\s{O}^+_X$ and taking projective limits on $s$, one obtains the short exact sequence 
\[
0\to \widehat{\s{I}}^+ \to  \widehat{\s{O}}_X^+ \to \iota_* \widehat{\s{O}}_D \to 0.
\]
Therefore, tensoring with $A^{\delta,+}_{\chi,\et}$, one gets a short exact sequence 
\begin{equation}
\label{eqSuiteExactCompact}
0 \to  A^{\delta,+}_{\chi,\et} \widehat{\otimes} \widehat{\s{I}}^+ \to  A^{\delta,+}_{\chi,\et} \widehat{\otimes} \widehat{\s{O}}_{X}^+ \to A^{\delta,+}_{\chi, \et}\widehat{\otimes }  \iota_* \widehat{\s{O}}_D^+ \to 0. 
\end{equation}
It is easy to see that \eqref{eqSuiteExactCompact} is the (completed) $\widehat{\s{O}}^+_X$-scalar extension of the short exact sequence
\[
0 \to  j_{\proket,!} A^{\delta,+}_{\chi} \to A_{\chi,\et}^{\delta,+}\to  \iota_* \iota^* A_{\chi,\et}^{\delta,+}\to 0.
\]
The previous shows that $j_{\proket,!}A_{\chi,\et}^{\delta,+}\widehat{\otimes} \widehat{\s{O}}_X^+= A_{\chi,\et}^{\delta,+}\widehat{\otimes }\widehat{\s{I}}^+$. Moreover,   the second almost equality of \eqref{eqPrimitiveComparisonAetD} holds after taking pro-Kummer-\'etale cohomology of the triangle  \eqref{eqSuiteExactCompact}, by applying the primitive comparison theorem for both $X$ and $D$, see \cite[Theorem 2.2.1]{LanLiuZhuRhamComparison2019}.
\end{proof}

Next, we define the $U_p$-operators for overconvergent modular symbols.  They are obtained by pulling back  the maps  of equation \eqref{eqUpForAetD}. 

\begin{definition}
\label{DefUpModularSymbols}
Consider the $U_p$-correspondence $C$ of $X$. The $U^t_p$ and $U_p$-correspondence of $A^{\delta}_{\chi,\et}$ and $D_{\chi,\et}^{\delta}$  are  the morphisms 
\[
U_p^t: p_1^*( A_{\chi,\et}^{\delta})  \to U_p:  p_2^* (A^{\delta}_{\chi,\et}) \mbox{ and } U_p:p_2^*( D_{\chi,\et}^{\delta}) \to p_1^* (D_{\chi,\et}^{\delta})
\]
defined by the pullback of \eqref{eqUpForAetD} by $\pi_{\HT}$. 
\end{definition}

We shall need the following classicality result, see \cite[Theorem 6.4.1]{AshStevensDeformations2008} and \cite[Theorem 3.16]{AISOvShimura2015}.

\begin{theo}
\label{TheoClassicityModularSymbols}
Let $\kappa=(k_1,k_2)\in X^*(\bbf{T})^+$ be a dominant weight.   The maps $D^{\delta}_{\kappa} \rightarrow V_{-w_0(\kappa)}$ and $V_\kappa \rightarrow A^\delta_\kappa$ induce  isomorphisms of the $(<k_1-k_2+1)$-slope part  for the action of the (normalized) $U_p$-operators  
\begin{align*}
H^1_{\proket}(X_{\bb{C}_p}, D^\delta_{\kappa,\et}) ^{U_p<k_1-k_2+1} \xrightarrow{\sim} H^1_{\proket}(X_{\bb{C}_p}, V_{-w_0(\kappa), \et})^{U_p<k_1-k_2+1} \\ H^1_{\proket}(X_{\bb{C}_p},  V_{\kappa,\et})^{U_p^t<k_1-k_2+1}\xrightarrow{\sim} H^1_{\proket}(X_{\bb{C}_p}, A^\delta_{\kappa,\et})^{U_p^t<k_1-k_2+1}.
\end{align*} 
A similar result holds for the cohomology with compact support. 
\end{theo}
\begin{remark}
In the references Theorem \ref{TheoClassicityModularSymbols} is proved only for the cohomology  of distributions. However, the same strategy works for the principal series and the cohomology with compact support.  Particularly,  the bounds of the classicality theorem are motivated by   \cite[Theorem 3.11.1]{AshStevensDeformations2008}.
\end{remark}

\subsection{Overconvergent Hodge-Tate  maps}

\label{SubsectionDlog}

We end this section with  the definition of overconvergent $\HT$-maps   interpolating the morphisms $\HT^k:\Sym^k T_pE \otimes \widehat{\s{O}}_X \rightarrow \omega_E^{k}\otimes_{\s{O}_X}\widehat{\s{O}}_X$ and $\HT^{k,\vee}: \omega_E^{-k}\otimes_{\s{O}_X}\widehat{\s{O}}_X(k)\rightarrow \Sym^k T_pE \otimes \widehat{\s{O}}_X$. 

\begin{definition}
\label{DefinitionOVHTModularCurve}
Let $\epsilon\geq \delta \geq n$,  $(R,R^+)$ a uniform affinoid Tate $\bb{Q}_p$-algebra and  $\chi=(\chi_1,\chi_2): T=\bbf{T}(\bb{Z}_p)\rightarrow R^{+,\times} $ a $\delta$-analytic character.  
\begin{enumerate}
\item We define the   map of pro-Kummer-\'etale sheaves  over $X_{1}(\epsilon)$
\[
\HT^{A,\vee}_{-w_0(\chi)}: \omega_E^{w_0(\chi)}\otimes_{R\widehat{\otimes} \s{O}_X} \widehat{\s{O}}_X(\chi_1) \rightarrow  A^{\delta}_{\chi,\et}\widehat{\otimes}\widehat{\s{O}}_X 
\]
as  the pullback of the highest weight vector map $\Psi^{A,\vee}_{-w_0(\chi)}:\s{L}(w_0(\chi))\rightarrow A^{\delta}_{\chi}\widehat{\otimes} \s{O}_{\Fl}$ over $U_1(\epsilon)\Iw_n\subset \Fl$ by  $\pi_{\HT}$,  cf.  Proposition \ref{PropOVBGGmaps}.    We let  $ \HT_{-w_0(\chi)}^{D}:  D^{\delta}_{\chi,\et}\widehat{\otimes}\widehat{\s{O}}_X \rightarrow \omega_E^{-w_0(\chi)}\otimes \widehat{\s{O}}_X(-\chi_1)$  be  the dual of $\HT^{A,\vee}_{-w_0(\chi)}$. 

\item We define the   map of pro-Kummer-\'etale sheaves  over $X_{w_0}(\epsilon)$
\[
\HT^{A}_{\chi}: A^{\delta}_{\chi,\et}\widehat{\otimes}\widehat{\s{O}}_X \rightarrow \omega_E^{\chi}\otimes  \widehat{\s{O}}_X(\chi_2)
\]
as the pullback by $\pi_{\HT}$ of the lowest weight vector map $ A^{\delta}_{\chi}\widehat{\otimes} \s{O}_{\Fl} \rightarrow \s{L}(\chi)$ over $U_{w_0}(\epsilon)\Iw_n\subset \Fl$.   We let  $\HT^{D, \vee}_{\chi}:  \omega_E^{-\chi}\otimes \widehat{\s{O}}_X (-\chi_2) \rightarrow   D^{\delta}_{\chi,\et}\widehat{\otimes}\widehat{\s{O}}_X$  be  the dual of $\HT^{A}_{\chi}$. 
\end{enumerate}
\end{definition}

\begin{lem}
\label{LemmaCompatibilityUp}
The overconvergent Hodge-Tate maps of Definition \ref{DefinitionOVHTModularCurve} are compatible with the $U_p$-correspondences of  Definitions \ref{DefUpOverconvergentModular} and \ref{DefUpModularSymbols}. Moreover,  they are compatible with the normalized $U_p$-correspondances of the sheaves $V_{\kappa, \et}$ for $\kappa\in X^*(\bbf{T})^+$, see Remark \ref{RemakNormalizationUp}.  
\end{lem}
\begin{proof}
The lemma is an immediate consequence of the definitions and  Proposition \ref{CorAcompatibleV}, see Remarks \ref{RemarkCorrQuotientFlag} and \ref{RemarkCorresFiniteLevel}. 
\end{proof}


\section{$p$-adic Eichler-Shimura maps}
\label{SectionOverconvergentES}

Throughout this section we fix $K^p\subset \GL_2(\bb{A}_{\bb{Q}}^{\infty,p})$ a neat compact open subgroup, and given  $K_p\subset \GL_2(\bb{Q}_p)$ we let $Y=Y_{K_p}$ and  $X=X_{K_p}$ denote the affine and compact modular curves of level $K_p$ respectively. We let  $D= X\backslash Y$ be the cusp divisor.   Let   $f: E^{sm}  \rightarrow X$ be  the semi-abelian scheme extending the universal elliptic curve over $Y$,  and $\overline{E}$ its relative compactification to a log smooth adic space over $X$, cf. \cite{DeligneRappLesSchemasCourbes}.   We denote by  $\DR_X(\overline{E})$ the relative log de Rham complex of $\overline{E}$ over $X$, and $\s{H}^1_{\dR}:= R^1 f_{\an,*}(\DR_X(\overline{E}))$ the first relative de Rham cohomology group.   The sheaf $\s{H}^{1}_{\dR}$ is endowed with a log connection 
\[
\nabla: \s{H}^1_{\dR}\rightarrow \s{H}^1_{\dR}\otimes_{\s{O}_X} \Omega_X^1(\log)
\] 
and a Hodge filtration  $0\to \omega_E \to \s{H}^1_{dR} \to \omega_E^{-1}\to 0$ with $\Fil^0 \s{H}^1_{\dR}= \s{H}^1_{\dR}$,  $\Fil^1 \s{H}^1_{\dR}=\omega_E$ and $\Fil^2\s{H}^1_{\dR}=0$,  satisfying Griffiths transversality. 
This last section is dedicated to the construction of the Eichler Shimura  decomposition for the \'etale cohomology of the modular curves.   We first provide  a new proof of Faltings's  Eichler-Shimura (ES) decomposition of the cohomology of the local systems $V_{\kappa, \et}$ (cf. \cite{FaltingsHodgeModular1987}). Our method uses   the Hodge-Tate period map  and the dual BGG resolution of Proposition \ref{PropBGGclassic}.   Next, we use the overconvergent Hodge-Tate  maps of  Definition \ref{DefinitionOVHTModularCurve} to define overconvergent Eichler-Shimura   maps.  We shall recover the results of   \cite{AISOvShimura2015} as well as a new map from the $H^1$-cohomology with compact support of overconvergent modular forms to overconvergent modular symbols.  Finally, we show that the overconvergent ES maps are  compatible with the Poincar\'e and Serre duality pairings, and that,  for small slope,  we have a perfect pairing.

\subsection{A pro\'etale Eichler Shimura  decomposition}

\label{SubsectionClassicalES}

Let $\kappa=(k_1,k_2)\in X^*(\bbf{T})^+$ be a dominant weight,  $V_{\kappa}$ the irreducible representation of highest weight $\kappa$ and  $V_{\kappa,\et}$ the pro-Kummer-\'etale  local system over $X$ defined by $V_{\kappa}$.    Let $\alpha=(1,-1)\in X^*(\bbf{T})$.  Let us recall a theorem of Faltings 

\begin{theo}[{\cite[Theorem 6]{FaltingsHodgeModular1987}}]
\label{TheoESDecompositionClassical}
 There are    Hecke and Galois equivariant isomorphisms 
\begin{eqnarray*}
H^1_{\proet}(Y_{\bb{C}_p}, V_{\kappa,\et})\otimes_{\bb{Z}_p} \bb{C}_p & = & H^1_{\an}(X_{\bb{C}_p},  \omega_E^{w_0(\kappa)})(k_1) \oplus  H^0_{\an}(X_{\bb{C}_p},  \omega_E^{\kappa+\alpha})(k_2-1) \\ 
H^1_{\proet,c}(Y_{\bb{C}_p}, V_{\kappa,\et})\otimes_{\bb{Z}_p} \bb{C}_p & = &H^{1}_{\an}( X_{\bb{C}_p}, \omega_E^{w_0(\kappa)}(-D))(k_1) \oplus  H^0_{\an}(X_{\bb{C}_p},  \omega_E^{\kappa+\alpha}(-D))(k_2-1).
\end{eqnarray*}
\end{theo}

 Let $\bb{B}_{\dR}^+$ be the de Rham period sheaf of $X_{\proket}$, $\theta: \bb{B}_{\dR}^+\to \widehat{\s{O}}_X$ the Fontaine map and $\xi\in \ker \theta$ a local generator of the kernel, we set $\bb{B}_{\dR}:= \bb{B}^+_{\dR}[\frac{1}{\xi}]$. Let $\OBdr^+$ be the geometric de Rham period sheaf and $\OBdr=\OBdr^+[\frac{1}{\xi}]$, we denote  by $\OC$ the sheaf $\gr^0(\OBdr)$. We refer to \cite{ScholzeHodgeTheory2013} and  \cite{DiaoLogarithmicHilbert2018} for the definition of the period sheaves. 
 
    The main ingredient of our  proof of Theorem  \ref{TheoESDecompositionClassical}   is an explicit relation between the Faltings extension $\gr^1 \OBdr^+$ and the Tate module  $T_pE$. This arises naturally in the study of  pullbacks of $\GL_2$-equivariant vector bundles of $\Fl$ by $\pi_{\HT}$. Recall that $\FL= \bbf{B} \backslash \GL_2$, so that we have an equivalence of categories  between $\GL_2$-equivariant vector bundles over $\Fl$ and algebraic representations of $\bbf{B}$. Let $\s{O}(\bbf{B})$ be the ring of algebraic functions of $\bbf{B}$ endowed with the right regular action, note that any finite representation of $\bbf{B}$ occurs in $\s{O}(\bbf{B})$.  Writing $\bbf{B}= \bbf{T} \ltimes \bbf{N}$ as a product of the diagonal torus and its unipotent radical, one has that $\s{O}(\bbf{B})=\s{O}(\bbf{T}) \otimes \s{O}(\bbf{N})$.  The action of $\bbf{B}$ on $\s{O}(\bbf{T})$ factors through $\bbf{T}$, so that this ring can be decomposed in terms of characters of the torus. By Proposition \ref{PropPullbackModtorsor} we already know what  the pullback by $\pi_{\HT}$ of the  quasi-coherent sheaf associated to $\s{O}(\bbf{T})$ is; it admits an explicit description in terms of modular sheaves. On the other hand, the action of $\bbf{B}$ on $\s{O}(\bbf{N})$ is determined by the right action $(n,b)\mapsto t_{b}^{-1}nt_b n_b$ where $n\in \bbf{N}$ and $b=(t_b,n_b)\in \bbf{B}=\bbf{T} \ltimes \bbf{N}$. Let $\underline{\s{O}}(\bbf{N})$ be the $\GL_2$-equivariant quasi-coherent sheaf  over $\Fl$ attached to $\s{O}(\bbf{N})$.

\begin{theo}[{\cite[Theorem  5]{FaltingsHodgeModular1987} and \cite[Theorem 4.2.2]{pan2020locally}}]
\label{PropFaltingsExtension}

There is a natural isomorphism of pro-Kummer-\'etale sheaves over $X$
\[
\pi_{\HT}^*(\underline{\s{O}}(\bbf{N}))= \OC.
\]
Furthermore, let $\s{O}(\bbf{N})^{\leq 1}\subset \s{O}(\bbf{N})$ be the  subrepresentation consisting on polynomials of degree $\leq 1$. We have an isomorphism as $\bbf{B}$-representations $\s{O}(\bbf{N})^{\leq 1} = \St \otimes \bb{Q}_p(-1,0)$, in particular   $\pi_{\HT}^*(\underline{\s{O}}(\bbf{N})^{\leq 1})= T_pE \otimes \widehat{\s{O}}_X(-1)\otimes \omega_E$. Moreover,  there is an isomorphism of extensions 
\[
\begin{tikzcd}
0 \ar[r] & \widehat{\s{O}}_X(1) \ar[r, "\HT^\vee"]  \ar[d, "\id"]& T_p E \otimes \widehat{\s{O}}_E \otimes \omega_E  \ar[r, "\HT"]  \ar[d, "\alpha"] & \omega_E^2 \otimes  \widehat{\s{O}}_X \ar[r] \ar[d, "-KS"]&  0  \\ 
0 \ar[r] & \widehat{\s{O}}_X(1) \ar[r] &  \gr^1 \OBdr^+ \ar[r] & \Omega^1_X(\log) \otimes \widehat{\s{O}}_X \ar[r]& 0 
\end{tikzcd}
\]
where $KS$ is the Kodaira-Spencer isomorphism.

\proof

It is enough to show the second statement, namely, if $\pi_{\HT}^*( \underline{\s{O}}(\bbf{N})^{\leq 1}) =T_pE\otimes \widehat{\s{O}}(-1)\otimes \omega_E= \gr^1 \OBdr^+(-1)$, one has 
\begin{eqnarray*}
\OC =\varinjlim_k \Sym^k( \gr^1\OBdr^+ (-1) ) = \varinjlim_k  \pi_{\HT}^*( \Sym^k \underline{\s{O}}(  \bbf{N})^{\leq 1}) = \pi_{\HT}^*(\underline{\s{O}}(\bbf{N})). 
\end{eqnarray*}

 Let $\s{F}$ be a sheaf   endowed with an integral log connection $\nabla$, we denote by $\DR(\s{F}, \nabla)$ the log de Rham complex of $\s{F}$.  Let $f:\overline{E}\to X$ be the compactification of the elliptic curve as a log smooth adic space over $X$.  We have a quasi-isomorphism of complexes over  $\overline{E}_{\proket}$ 
\[
T_p\bb{G}_m\otimes_{\widehat{\bb{Z}}_p}\bb{B}_{\dR,\overline{E}}\simeq  T_p\bb{G}_m\otimes_{\widehat{\bb{Z}}_p} \DR( \s{O}\!\bb{B}_{\dR,\log, \overline{E}},d  )= \DR( \s{O}\!\bb{B}_{\dR,\log, \overline{E}},d)(1).
\]
Taking $R^1 f_{\proket,*}$  one obtains by \cite[Theorem 3.2.7 (5)]{DiaoLogarithmicHilbert2018}  or \cite[Theorem 8.8]{ScholzeHodgeTheory2013}  
\begin{equation}
\label{eqDrahmcomparison}
T_pE\otimes \bb{B}_{\dR, X} \simeq T_p E \otimes_{\widehat{\bb{Z}}_p} \DR( \s{O}\!\bb{B}_{\dR,\log, X},d) \cong \DR(\s{H}^1_{dR}\otimes  \s{O}\!\bb{B}_{\dR,\log, X},  \nabla) (1) .
\end{equation}
Let $\bb{M}:= T_pE(-1)\otimes \bb{B}_{\dR,X}^+ =(T_pE(-1) \otimes \s{O}\! \bb{B}_{\dR,\log,X}^+)^{\nabla=0}$ and $\bb{M}_0 = ( \s{H}_{\dR}^1\otimes \s{O}\! \bb{B}_{\dR,\log, X}^+)^{\nabla=0}$.  Both $\bb{M}_0$ and $\bb{M}$ are $\bb{B}_{\dR,X}^+$-lattices of  $T_pE(-1)\otimes \bb{B}_{\dR,X}$.    The  Hodge Filtration of $\s{H}^{1}_{\dR}$ is concentrated in degrees $0$ and $1$, and equal to 
\[
0 \rightarrow \omega_E \rightarrow \s{H}^1_{\dR} \rightarrow  \omega_E^{-1}\rightarrow 0. 
\]
This implies that $  \xi\bb{M} \subset \bb{M}_0\subset \bb{M}$, and that $(\Fil^1( \s{H}^1_{\dR} \otimes \s{O}\! \bb{B}^+_{\dR,\log,X} ))^{\nabla=0}= \xi \bb{M}$.  Then,  Proposition 7.9 of \cite{ScholzeHodgeTheory2013} implies 
\begin{gather*}
\bb{M}_0/ \xi \bb{M}  = \gr^0 \s{H}^1_{\dR}\otimes \widehat{\s{O}}_X = \omega_{E}^{-1}\otimes \widehat{\s{O}}_{X} \\
\bb{M}/\bb{M}_0 = \gr^{1} \s{H}^1_{\dR} \otimes \widehat{\s{O}}_X(-1)= \omega_E\otimes \widehat{\s{O}}_X(-1). 
\end{gather*}
In particular,  
\[
0\rightarrow \xi \bb{M}_0/ \xi^2\bb{M} \rightarrow \xi \bb{M}/ \xi^2\bb{M}  \rightarrow  \xi \bb{M}/ \xi \bb{M}_0 \rightarrow 0
\]
is just the Hodge-Tate exact sequence of $T_pE\otimes \widehat{\s{O}}_X$ (note the multiplication by $\xi$ induced by the Tate twist in (\ref{eqDrahmcomparison})), and 
\[
0 \rightarrow  \xi \bb{M}/\xi \bb{M}_0 \rightarrow \bb{M}_0/\xi \bb{M}_0 \rightarrow \bb{M}_0 /\xi \bb{M}\rightarrow 0
\]
is the Hodge exact sequence of $\s{H}_{dR}^1\otimes \widehat{\s{O}}_X$. 

Consider the map of short exact sequences 
\begin{equation}
\label{eqMapShortExactSeq}
\begin{tikzcd}
0 \ar[r]& \bb{M} \ar[r] & \bb{M}\otimes \s{O}\!\bb{B}_{\dR,\log,X}^+ \ar[r, "d"]& \bb{M}\otimes \s{O}\!\bb{B}_{\dR,\log,X}^+ \otimes \Omega_X^1(\log) \ar[r]& 0 \\ 
0 \ar[r] & \bb{M}_0 \ar[r] \ar[u] & \bb{M}_0\otimes \s{O}\!\bb{B}_{\dR,\log,X}^+ \ar[r, "d"] \ar[u]& \bb{M}_0\otimes \s{O}\!\bb{B}_{\dR,\log,X}^+ \otimes \Omega_X^1(\log) \ar[r] \ar[u]& 0
\end{tikzcd}
\end{equation}
and let $\tilde{\theta}:  \s{O}\!\bb{B}_{\dR,\log,X}^+ \rightarrow \widehat{\s{O}}_X$ be  the Fontaine's map. 

Taking the first graded piece in the upper short exact sequence one finds 
\[
0 \rightarrow \xi\bb{M}/\xi^2\bb{M} \rightarrow \frac{\bb{M}\otimes (\ker \tilde{\theta})}{  \bb{M}\otimes (\ker \tilde{\theta})^2} \xrightarrow{\overline{\nabla}} \frac{\bb{M}\otimes \s{O}\!\bb{B}^+_{\dR,\log,X}}{\bb{M}\otimes (\ker \tilde{\theta})} \otimes \Omega^1_X(\log) \rightarrow 0.
\]
Since $\xi \bb{M}\subset \bb{M}_0$,  taking the intersection with the image of the  lower short exact sequence in (\ref{eqMapShortExactSeq}) one obtains a short exact sequence 
\begin{equation}
\label{eqShort1}
0\rightarrow \frac{\xi \bb{M}}{\xi^2\bb{M}} \rightarrow \frac{\bb{M}_0\otimes  (\ker\tilde{\theta}) + \xi \bb{M}\otimes \s{O}\!\bb{B}_{\dR,\log,X}^+ }{\bb{M}_0 \otimes (\ker \tilde{\theta})^2 + \xi \bb{M}\otimes (\ker \tilde{\theta}) } \xrightarrow{\nabla} \frac{\bb{M}_0 \otimes \s{O}\!\bb{B}_{\dR,\log,X}^+}{\bb{M}_0 \otimes (\ker \tilde{\theta})+ \xi \bb{M}\otimes \s{O}\!\bb{B}_{\dR,\log,X}^+}\otimes \Omega^1_X(\log) \rightarrow  0
\end{equation}
The right term of (\ref{eqShort1}) is equal to $\bb{M}_0/ \xi \bb{M}\otimes \Omega^1_X(\log)= \omega_E^{-1}\otimes \Omega^1_X(\log)\otimes \widehat{\s{O}}_X$.  The middle term is equal to 
\[\gr^1(\s{H}^1_{\dR}\otimes \s{O}\! \bb{B}_{\dR,\log,X}^+)=\omega_E \otimes\widehat{\s{O}}_X \oplus \omega_E^{-1}\otimes \gr^1 \s{O}\! \bb{B}^+_{\dR,\log,X}.
\]
Note that the restriction of $\overline{\nabla}$ to $\omega_E \otimes \widehat{\s{O}}_X$ is the Kodaira-Spencer map by definition.  Indeed, if $\nabla: \s{H}^1_{dR}\rightarrow \s{H}^1_{dR}\otimes \Omega^1_{X}(\log)$ is the connection, taking the first graded piece we get the map 
\[
KS : \omega_E \rightarrow \omega_E^{-1}\otimes \Omega_X^1(\log). 
\]

Therefore,   we have constructed a short exact sequence 
\[
0\rightarrow T_pE \otimes \widehat{\s{O}}_X \xrightarrow{\HT\oplus \alpha} \omega_E \otimes \widehat{\s{O}}_X \oplus \omega_E^{-1}\otimes \gr^1 \s{O}\! \bb{B}^+_{\dR,\log,X} \xrightarrow{KS\oplus \overline{\nabla}}  \omega_E^{-1}\otimes \Omega^1_{X}(\log)\otimes \widehat{\s{O}}_X \rightarrow 0. 
\]
Thus,  as $KS$ is an isomorphism so is $\alpha$, and  we have a commutative diagram
\[
\begin{tikzcd}
0\ar[r] & \omega_E^{-1}\otimes \widehat{\s{O}}_X(1) \ar[d, "\id"] \ar[r, "\HT^{\vee}"] & T_pE \otimes \widehat{\s{O}}_X  \ar[r, "\HT" ] \ar[d, "\alpha"]& \omega_E \otimes \widehat{\s{O}}_X  \ar[d, "-KS"] \ar[r]& 0 \\
0 \ar[r] & \omega_E^{-1}\otimes \widehat{\s{O}}_X(1) \ar[r]& \omega_E^{-1}\otimes \gr^1 \s{O}\!\bb{B}_{\dR,\log,X}^+ \ar[r, "\nabla"] & \omega_E^{-1}\otimes \Omega_X^1(\log)\otimes \widehat{\s{O}}_X \ar[r] & 0 
\end{tikzcd}
\]
which finishes the proof.  \endproof
\end{theo}

  \begin{remark}The previous  proposition  is the key tool  necessary  to compute the relative Sen operator for the modular curve in  Pan's locally analytic vectors,  cf.  \cite{pan2020locally}.  
  \end{remark}

We deduce the Eichler-Shimura decompositions for the local systems $V_{\kappa,\et}$.

\begin{theo}
\label{TheoBGGproEtale}
Let $\alpha=(1,-1)$. Let $\kappa=(k_1,k_2)\in X^*(\bbf{T})^+$ be a dominant weight and $\BGG(\kappa)$ the BGG complex of  Proposition \ref{PropBGGclassic}
\[
\BGG(\kappa): \;\; [0\rightarrow  V_{\kappa,\et} \rightarrow  V(\kappa)\rightarrow  V(w_0(\kappa)-\alpha)\rightarrow 0].
\]
Let $\underline{\BGG(\kappa)}$ be the $\GL_2$-equivariant complex of sheaves defined by $\BGG(\kappa)$. We have a quasi-isomorphism of complexes over $X_{\proket}$
\[
\pi_{\HT}^*(\underline{\BGG(\kappa)})=[0\rightarrow V_{\kappa}\otimes \widehat{\s{O}}_X \rightarrow \omega_E^{w_0(\kappa)} \otimes \OC(k_1) \rightarrow \omega_E^{\kappa+\alpha}\otimes \OC(k_2-1) \rightarrow 0].
\]
Moreover, let $\lambda: X_{\bb{C}_p, \proket} \to X_{\bb{C}_p, \an}$ be the projection of sites. Let $\iota: D_{\proket}\to X_{\proket}$ be the natural morphism, and $\widehat{\s{I}}= \ker(\widehat{\s{O}}_X \to  \iota_* \widehat{\s{O}}_D)$.  We have  
\begin{eqnarray*}
R\lambda_{*} (V_{\kappa,\et}\otimes \widehat{\s{O}}_{X})&= & \omega_E^{w_0(\kappa)}\otimes \bb{C}_p(k_1)[0]\oplus \omega_E^{\kappa+\alpha} \otimes \bb{C}_p(k_2-1)[-1] \\ 
R\lambda_{*} (V_{\kappa,\et}\otimes \widehat{\s{I}}_{X}) & = &  \omega_E^{w_0(\kappa)}(-D)\otimes \bb{C}(k_1)[0]\oplus \omega_E^{\kappa+\alpha}(-D) \otimes \bb{C} (k_2-1)[-1]. 
\end{eqnarray*}
Then, taking $H^1$-cohomology over $X_{\bb{C}_p,\an}$ one obtains Theorem \ref{TheoESDecompositionClassical}.   
\proof
Note that $V(\kappa)= \kappa \otimes V(0)$ as $\bbf{B}$-module, thus the first part of the theorem follows from Theorem \ref{PropFaltingsExtension} and Proposition \ref{PropPullbackModtorsor}. On the other hand, by   \cite[Lemma 3.3.2]{DiaoLogarithmicHilbert2018}  we know that $R \lambda_* \OC = \s{O}_{X_{\bb{C}_p}}$ and $R\lambda_* ( \OC\otimes \iota_* \widehat{\s{O}}_D )= \iota_* \s{O}_D$, in particular $ R\lambda_* (\OC \otimes  \widehat{\s{I}})= \s{O}(-D)$. Therefore,  
\begin{equation}
\label{eqProjectionOfSites}
\begin{gathered}
R \lambda_*(V_{\kappa,\et}\otimes \widehat{\s{O}}_X)=[\omega_E^{w_0(\kappa)}\otimes \bb{C}_p(k_1) \rightarrow  \omega_E^{\kappa+\alpha} \otimes \bb{C}_p(k_2-1)] \\ R\lambda_*(V_{\kappa,\et}\otimes \widehat{\s{I}})=[\omega_E^{w_0(\kappa)}(-D)\otimes \bb{C}_p(k_1)\to \omega_E^{\kappa+\alpha}(-D)\otimes \bb{C}_p(k_2-1)].
\end{gathered}
\end{equation}
But the arrows  of \eqref{eqProjectionOfSites} are $0$  since  the sheaf $\omega_E^{w_0(\kappa)}\otimes \bb{C}_p(k_1)$ already factors through $V_{\kappa,\et}\otimes  \widehat{\s{O}}_X$ via $\HT^{\vee}_{-w_0(\kappa)}: \omega_E^{w_0(\kappa)}\otimes \widehat{\s{O}}_X(k_1)\to V_{\kappa, \et}\otimes \widehat{\s{O}}_X$. The theorem follows. 
\endproof
\end{theo}

\begin{remark}
In \cite{LanLiuZhuRhamComparison2019},  Lan-Liu-Zhu gave another proof of the Eichler-Shimura decomposition for arbitrary Shimura varieties (also called BGG decomposition), see Theorem 6.2.3 of  \textit{loc. cit.} Their proof uses the $p$-adic Riemann-Hilbert correspondence and the BGG decomposition in terms of Verma modules, then they apply Faltings's strategy to construct a complex of $\n{D}$-modules quasi-isomorphic to the de Rham complex of the corresponding vector bundle with connection.     In our situation, we use the dual BGG decomposition and the associated $\GL_2$-equivariant sheaves over $\Fl$ instead. Our key ingredient to compute the pro\'etale cohomology of $V_{\kappa}\otimes \widehat{\s{O}}_X$  is Theorem  \ref{PropFaltingsExtension} which serves as a dictionary between vector bundles over $\Fl$ and  $\widehat{\s{O}}_X$-vector bundles over $X$. 
\end{remark}

We finish this section with the compatibility of the Eichler-Shimura decomposition with Poincar\'e and Serre duality. 

\begin{prop}[{\cite[Theorem 6.2.3]{LanLiuZhuRhamComparison2019}}]
\label{CoroPairingsClassical}
 Let $\Tr_P:  H^2_{\proet,c}(Y_{\bb{C}_p}, \bb{Q}_p(1))\to \bb{Q}_p$ and $\Tr_S: H^1_{\an}(X_{\bb{C}_p},   \Omega^1_X)\to \bb{C}_p$ be the Poincar\'e and Serre traces. Then the Poincar\'e pairing 
\[
H^1_{\proet}(Y_{\bb{C}_p},V_{\kappa,\et})(1)\times H^1_{\proet,c}(Y_{\bb{C}_p},V_{-w_0(\kappa),\et})\xrightarrow{\cup} H^2_{\proet,c}(Y_{\bb{C}_p},\bb{Q}_p(1))\xrightarrow{\Tr_P}\bb{Q}_p
\]
and the Serre pairing 
\[
H^1_{\an}(X_{\bb{C}_p},\omega^{-\kappa}_E)\times H^0_{\an}(X_{\bb{C}_p}, \omega_E^{\kappa+\alpha}(-D))\xrightarrow{KS \circ  \cup} H^1_{\an}(X_{\bb{C}_p},\Omega^1_X)\xrightarrow{\Tr_S}\bb{C}_p
\]
(resp. for  $\omega^{w_0(\kappa)}_E(-D)$ and $\omega_E^{-w_0(\kappa)}$) are compatible with the  Eichler-Shimura decomposition. 
\end{prop}

\subsection{The overconvergent Eichler-Shimura maps}

Let $n\geq 1$ be a fixed integer.  In the next two sections  we shall work with $Y=Y_0(p^n)$ and  $X=X_0(p^n)$, the modular curves of level $K^p\Iw_n$.   Let $\epsilon \geq \delta \geq n$ be rational numbers,  $(R,R^+)$ a uniform affinoid Tate $\bb{Q}_p$-algebra and $\chi=(\chi_1,\chi_2):  T\rightarrow R^{+,\times}$ a $\delta$-analytic character.  Let $w\in W=\{1,w_0\}$ be an element in the Weyl group of $\GL_2$ and $X_{w}(\epsilon)$ the $\epsilon$-neighbourhood of the $w$-ordinary locus, cf. Definition \ref{DefOpensFlag}.  Let $\omega_E^{\chi}$ be the sheaf of overconvergent modular  forms of weight $\chi$  over $X_{w}(\epsilon)$ (cf. Definition \ref{DefOvModsheaf}), and let   $A_{\chi,\et}^{\delta}$ and $D_{\chi,\et}^{\delta}$ be the the pro-Kummer-\'etale sheaves of $\delta$-analytic principal series and distributions over $X$ (cf. \S \ref{SubsectionOVmodSym}).  We can finally state the main theorem of this section, first we need a lemma:

\begin{lem}
\label{LemmaOVESMaps}
Let $\alpha=(1,-1)\in X^*(\bbf{T})$.  The overconvergent  Hodge-Tate morphisms of Definition \ref{DefinitionOVHTModularCurve} give  rise Galois and  $U_p^t$-equivariant  maps  of cohomology groups (see Definition \ref{DefiGoodNormalizations} for the good normalizations)
\begin{equation}
\label{eqESmapsA}
\begin{gathered}
H^1_{\proket}(X_{\bb{C}_p},  A^{\delta}_{\chi,\et}\widehat{\otimes}\widehat{\s{O}}_X )\xrightarrow{ES_{A}} H^0_{w_0}(X_{\bb{C}_p},  \omega_E^{\chi+\alpha})_{\epsilon}(\chi_2-1) \\
H^1_{1,c}(X_{\bb{C}_p}, \omega_E^{w_0(\chi)})_{\epsilon}(\chi_1)\xrightarrow{ES^{\vee}_{A}} H^1_{\proket}(X_{\bb{C}_p}, A^{\delta}_{\chi,\et}\widehat{\otimes} \widehat{\s{O}}_X ). 
\end{gathered}
\end{equation}
Dually, we have Galois and $U_p$-equivariant maps of cohomology groups
\begin{gather*}
H^1_{w_0,c}(X_{\bb{C}_p}, \omega_E^{-\chi})_{\epsilon}(-\chi_2)  \xrightarrow{ES^\vee_{D}}   H^1_{\proket}(X_{\bb{C}_p}, D^{\delta}_{\chi,\et}\widehat{\otimes}\widehat{\s{O}}_X)   \\ 
 H^1_{\proket}(X_{\bb{C}_p},  D^{\delta}_{\chi,\et}\widehat{\otimes}\widehat{\s{O}}_X)\xrightarrow{ES_{D}} H^0_{1}(X_{\bb{C}_p}, \omega_E^{-w_0(\chi)+ \alpha} )_{\epsilon}(-\chi_1-1). 
\end{gather*}
An analogous statement holds by exchanging $A^{\delta}_{\chi,\et}$ and $D^{\delta}_{\chi,\et }$ by  $j_{\proket,!} A^{\delta}_{\chi,et}$ and $j_{\proket,!}D^{\delta}_{\chi,\et}$, and $\omega^{\chi}_E$ by $\omega^{\chi}_{E}(-D)$. 
\proof
Let $\lambda:X_{\bb{C}_p, \proket}\rightarrow X_{\bb{C}_p, \an}$ be the natural projection.  First,  let us show that 
\begin{equation}
\label{eqprojOvmodsheaf}
\begin{gathered}
R\lambda_{*} (\omega_E^{\chi}\widehat{\otimes }\widehat{\s{O}}_X)  =   \omega_E^{\chi}[0] \oplus \omega_E^{\chi+\alpha}\otimes \bb{C}_p(-1)[-1] \\ 
R\lambda_{*} (\omega_E^{\chi}\widehat{\otimes }\widehat{\s{I}} )  =   \omega_E^{\chi}(-D)[0] \oplus \omega_E^{\chi+\alpha}(-D)\otimes\bb{C}_p (-1)[-1].
\end{gathered}
\end{equation}

 By Remark \ref{RemarkOvTorsorFromAnalytic},  the sheaf $\omega_E^{\chi,+}$ is an orthonormalizable $\s{O}^+_{X,\et}\widehat{\otimes }R^+$ sheaf locally for the \'etale topology of $X$.   Let $\nu: X_{\bb{C}_p,\proket}\rightarrow X_{\bb{C}_p,\ket}$ be the natural projection of sites. Then,  locally  \'etale, we can write $\omega_E^{\chi,+}\widehat{\otimes}\widehat{\s{O}}_X= \widehat{\bigoplus}_i \widehat{\s{O}}^+_X \widehat{\otimes} R^+ e_i$.  We get that
\begin{eqnarray*}
R\nu_{  *} (\omega_E^{\chi}\widehat{\otimes}\widehat{\s{O}}_X) & = &  R\nu_{*} (\omega_E^{\chi,+} \widehat{\otimes}\widehat{\s{O}}_X^+)   [\frac{1}{p}]  \\
							& = &R \varprojlim_s R\nu_{*} ( \bigoplus_i (\s{O}^+_X/p^s\otimes R^+) e_i) [\frac{1}{p}] \\
							& = & R\varprojlim_s \bigoplus_i( R\nu_{*} \widehat{\s{O}}_X^+/p^s\otimes R^+)e_i   [\frac{1}{p}] \\
							& = &  \omega_E^{\chi} \widehat{\otimes}_{R\widehat{\otimes} \widehat{\s{O}}_X}  R \nu_{*} (\widehat{\s{O}}_X\widehat{\otimes}R). 
\end{eqnarray*}
Since $R$ is a $\bb{Q}_p$-Banach space, it has a orthonormalizable basis over $\bb{Q}_p$, the same reasoning as before shows that $\omega_E^{\chi} \widehat{\otimes}_{R\widehat{\otimes} \widehat{\s{O}}_X}    R \nu_{*} (\widehat{\s{O}}_X\widehat{\otimes}R)= \omega_E^\chi \widehat{\otimes}_{\widehat{\s{O}}_X} R\nu_* \widehat{\s{O}}_X$.  On the other hand,    by Therorem \ref{TheoBGGproEtale} we know that $R \nu_* \widehat{\s{O}}_X= \s{O}_{X,\ket}[0] \oplus  \omega_E^{\alpha}(-1)[-1]$.  Lemma 5.5 of \cite{ScholzeHodgeTheory2013} and Lemma 6.17 of \cite {DiaoLogarithmic2019}  imply that the integral structure  obtained by $R\nu_{*}(\widehat{\s{O}}^+_X)$ defines the same topology of the one given by $\s{O}^+_{X,\ket}[0]\oplus \omega_E^{\alpha,+}(-1)[-1]$ (in fact, the cited lemmas show that both complexes differ just by bounded torsion when evaluated at affinoids).   Therefore 
\[
R\nu_{*} (\omega_E^{\chi} \widehat{\otimes} \widehat{\s{O}}_{X}) = \omega_E^{\chi} [0] \oplus \omega_E^{\chi+\alpha}(-1)[0] 
\]
over the Kummer-\'etale site of $X_{w}(\epsilon)$.   Finally,  let $\mu:X_{\bb{C}_p,\ket}\rightarrow X_{\bb{C}_p,\an}$ be the projection map.  In order to  descend to the analytic site we recall that $\omega_E^{\chi}$ is a projective Banach sheaf over  $X_{w}(\epsilon)$ (cf.  \cite[\S 5.5.2]{BoxerPilloniHigher2020}).  Thus,  it is a direct summand of an orthonormalizable Banach sheaf $\widehat{\bigoplus}_i \s{O}_X$ over $X_{w}(\epsilon)$.  But we know that the Kummer-\'etale cohomology of $\s{O}^+_{X}$ in affinoids admitting a Kummer-\'etale map to a torus $\bb{T}= \Spa(\bb{Q}_p\langle T^{\pm 1} \rangle,  \bb{Z}_p\langle  T^{\pm 1}\rangle)$ or a disc $\bb{D}=\Spa(\bb{Q}_p\langle U\rangle, \bb{Z}_p\langle U \rangle)$ has bounded torsion (by computing the cohomology via the  pullback of the perfectoid torus or disc, and using Lemma 5.5 of \cite{ScholzeHodgeTheory2013} or Lemma 6.1.7 of   \cite {DiaoLogarithmic2019}  again).    A similar argument as before  using derived limits shows that $R\mu_{*} (\widehat{\bigoplus}_i \s{O}_{X,\ket}) =\widehat{\bigoplus}_i \s{O}_{X,\an}$,  whence $R\mu_{*} \omega_E^{\chi}= \omega_E^{\chi}$.  Finally, to prove the second equality of \eqref{eqprojOvmodsheaf}, it is enough to show the analogous property for $\omega^{\chi}_E\widehat{\otimes}\iota_* \widehat{\s{O}}_D$, this follows from the previous argument applied to the log   adic space defined by the cusps (notice that even if $D$ is a disjoint union of points, the log structure is not trivial!). 

Next, we construct the overconvergent Eichler-Shimura maps; we only explain the procedure for the sheaf $A_{\chi,\et}^{\delta}$ and the pro-Kummer-\'etale cohomology, the case of $D_{\chi,\et}^{\delta}$ or the cohomology with compact support follows the same steps.  Consider the overconvergent Hodge-Tate maps  of Definition \ref{DefinitionOVHTModularCurve}
\begin{gather*}
\HT^{A,\vee}_{-w_0(\chi)}:  \omega_E^{w_0(\chi)}\widehat{\otimes } \widehat{\s{O}}_X(\chi_1) \rightarrow A^{\delta}_{\chi,\et}\widehat{\otimes}\widehat{\s{O}}_X \mbox{ over } X_{1}(\epsilon )\\
\HT^{A}_{\chi}: A^{\delta}_{\chi,\et}\widehat{\otimes}\widehat{\s{O}}_X \rightarrow    \omega_E^{\chi}\widehat{\otimes} \widehat{\s{O}}_X(\chi_2)  \mbox{ over } X_{w_0}(\epsilon).  
\end{gather*}
Taking the projection from the pro-Kummer-\'etale site to the analytic site,  one gets maps 
\begin{gather*}
\omega_E^{w_0(\chi)}\widehat{\otimes } R\lambda_{*} \widehat{\s{O}}_X(\chi_1) \rightarrow R\lambda_{*}(A^{\delta}_{\chi,\et}\widehat{\otimes} \widehat{\s{O}}_X)   \mbox{ over } X_{1}(\epsilon ) \\
R\lambda_{*}(A^{\delta}_{\chi,\et}\widehat{\otimes} \widehat{\s{O}}_X) \rightarrow \omega_E^{\chi} \widehat{\otimes} R \lambda_{*}\widehat{\s{O}}_X(\chi_2) \mbox{ over } X_{w_0}(\epsilon). 
\end{gather*}
Taking the overconvergent cohomologies of Definition \ref{DefiSupportOvcohomologies} and using \eqref{eqprojOvmodsheaf} we obtain maps
\begin{equation}
\label{eqESMaps1}
\begin{gathered}
R\Gamma_{1,c}(X_{\bb{C}_p},  \omega_E^{w_0(\chi)})_{\epsilon}(\chi_1)  \rightarrow R\Gamma_{1,c}(X_{\bb{C}_p}, R\lambda_{*}(A^{\delta}_{\chi,\et}\widehat{\otimes} \widehat{\s{O}}_X))_{\epsilon} \\
R\Gamma_{w_0}(X_{\bb{C}_p}, R\lambda_{*}(A^{\delta}_{\chi,\et}\widehat{\otimes}\widehat{\s{O}}_X))_{\epsilon} \rightarrow   R\Gamma_{w_0}(X_{\bb{C}_p},  \omega_E^{\chi+\alpha}))_{\epsilon}(\chi_2-1)[-1]. 
\end{gathered}
\end{equation}
On the other hand, we have restriction and correstriction maps 
\begin{equation}
\label{eqResCor}
 R\Gamma_{1,c}(X_{\bb{C}_p}, R\lambda_{*}(A^{\delta}_{\chi,\et}\widehat{\otimes} \widehat{\s{O}}_X))_{\epsilon} \xrightarrow{\Cor} R\Gamma_{\proket}(X_{\bb{C}_p},  A^{\delta}_{\chi,\et} \widehat{\otimes} \widehat{\s{O}}_X) \xrightarrow{\Res} R\Gamma_{w_0}(X_{\bb{C}_p}, R\lambda_{*}(A^{\delta}_{\chi,\et}\widehat{\otimes}\widehat{\s{O}}_X))_{\epsilon}.
\end{equation}
Taking $H^1$-cohomology  in \eqref{eqESMaps1}, and  composing with  the  morphisms of \eqref{eqResCor},  one obtains the maps (\ref{eqESmapsA}). The Galois equivariance is clear as the $\HT$-maps are defined over $X_{w}(\epsilon)\subset X$.   The compatibility with respect to the good normalization of the  $U_p$-operators follows from  Lemma   \ref{LemmaCompatibilityUp} and the fact that  $U_{p,\alpha}^{good}= U_{p,\alpha}$ for the correspondence associated to $\omega^{\alpha}_E$, see Definition \ref{DefiGoodNormalizations}. 
\endproof
\end{lem}

\begin{theo}
\label{TheoMain}
Let $\epsilon\geq \delta \geq n$, $(R,R^+)$ a uniform affinoid Tate $\bb{Q}_p$-algebra and $\chi: T=\bbf{T}(\bb{Z}_p)\rightarrow R^{+,\times}$ a $\delta$-analytic character. The following hold

\begin{enumerate}

\item 
The composition of the Eichler-Shimura maps $ES_{A}\circ ES^{\vee}_{A}$  is zero.  Consider the following sequence
\begin{equation}
\label{eqESsequence}
0\rightarrow H^1_{1,c}(X_{\bb{C}_p},  \omega_E^{w_0(\chi)})_{\epsilon}(\chi_1) \xrightarrow{ES_{A}^{\vee}}  H^1_{\proket}(X_{\bb{C}_p},  A^{\delta}_{\chi,\et}\widehat{\otimes} \widehat{\s{O}}_X) \xrightarrow{ES_{A}} H^0_{w_0}(X_{\bb{C}_p},  \omega_E^{\chi+\alpha})_{\epsilon}(\chi_2-1)\rightarrow 0.
\end{equation}

\item  Assume that $\n{V}=\Spa(R,R^+)$ is an affinoid subspace of the weight space $\n{W}_T$ of $T$, an let $\kappa=(k_1,k_2)\in \n{V}$ be a dominant weight of $\bbf{T}$.  Let $\alpha=(1,-1)\in X^*(\bbf{T})$ and let  $\chi=\chi^{un}_{\n{V}}$ be the universal character  of $\n{V}$. The following diagram commutes
 
\begin{center}
\begin{tikzpicture}[commutative diagrams/every diagram]
 \node (P0) at (0,2) {$H^1_{1,c}(X_{\bb{C}_p},\omega_E^{w_0(\chi)})_{\epsilon}(\chi_1) $};
 \node (P1) at (0,0) {$H^1_{1,c}(X_{\bb{C}_p},\omega_E^{w_0(\kappa)})_{\epsilon}(k_1) $};
 \node (P2) at (0,-2) {$H^1_{\an}(X_{\bb{C}_p},\omega_E^{w_0(\kappa)})(k_1) $};
 
 \node (P3) at (5,2) {$ H^1_{\proket}(X_{\bb{C}_p},  A^{\delta}_{\chi,\et} \widehat{\otimes}\widehat{\s{O}}_X)$};
 \node (P4) at (5,0) {$H^1_{\proket}(X_{\bb{C}_p},  A^{\delta}_{\kappa,\et} \widehat{\otimes}\widehat{\s{O}}_X)$};
 \node (P5) at (5,-2) {$H^1_{\proket}(X_{\bb{C}_p}, V_{\kappa,\et})\otimes \bb{C}_p $}; 
 
 \node (P6) at (10,2) {$H^0_{w_0}(X_{\bb{C}_p}, \omega_E^{\chi+\alpha})_{\epsilon}(\chi_2-1)$};
 \node (P7) at (10,0) {$H^0_{w_0}(X_{\bb{C}_p},\omega_E^{\kappa+\alpha})_{\epsilon}(k_2-1)$};
 \node (P8) at (10,-2) {$ H^0_{\an}(X_{\bb{C}_p}, \omega_E^{\kappa+\alpha})(k_2-1)  $};

\path[commutative diagrams/.cd, every arrow, every label] 
   (P0) edge node {$ES^\vee_{A}$}  (P3) 
   (P3) edge node {$ES_{A}$}  (P6)
   (P1) edge node {} (P4)
   (P4) edge node {} (P7)
   (P2) edge node {$ES^\vee$} (P5) 
   (P5) edge node {$ES$} (P8)
   
   (P0) edge node {} (P1)
   (P3) edge node {} (P4)
   (P6) edge node {} (P7)
   
   (P1) edge node {$\Cor$} (P2)
   (P5) edge node {} (P4)
   (P8) edge node [swap] {$\Res$} (P7);

\end{tikzpicture}
\end{center}

\item The maps of (2) are Galois and $U_p^t$-equivariant with respect to the good nomalizations of the Hecke operator (Definition \ref{DefiGoodNormalizations}).  In particular, the diagram above restricts to the finite slope part with respect to the action of  $U_p^t$.

\item  Let $h<k_1-k_2+1$.  There exists an open affinoid $\n{V}'\subset\n{V}$ containing $\kappa$ such that the $(\leq h)$-slope part  of  the restriction of  (\ref{eqESsequence})  to $\n{V}'$ is a short exact sequence of finite free $\bb{C}_p\widehat{\otimes}_{\bb{Q}_p} \s{O}(\n{V}')$-modules.      

\item Keep  the hypothesis of (4),  and let $\chi$ be the universal character of $\n{V}'$.  Let $\widetilde{\chi}= \chi_1-\chi_2+1 : \bb{Z}_p^\times \rightarrow R^{+,\times}$, and $b= \frac{d}{dt}|_{t=1} \widetilde{\chi}(t)$.  Then  we have a  Galois-equivariant split after inverting  $b$ 
\[
H^1_{\proket}(X_{\bb{C}_p}, \n{A}^{\delta}_{\chi}\widehat{\otimes}\widehat{\s{O}}_X)^{\leq h}_{b} =[H^1_{1,c}(X_{\bb{C}_p},  \omega_E^{w_0(\chi)})^{\leq h}_{\epsilon}(\chi_1)]_b \oplus [H^0_{w_0}(X_{\bb{C}_p},   \omega_E^{\chi+\alpha})^{\leq h}_{\epsilon}(\chi_2-1)]_b.
\] 
Analogous statements hold for the cohomology with compact support and for the sheaf $D_{\chi,\et}^{\delta}$.

\end{enumerate}
\end{theo}

\begin{proof}
Part  (1) follows from the fact that the composition of the restriction and correstriction maps  (\ref{eqResCor}) is zero.   

Parts (2)  and (3) follows from Lemma \ref{LemmaOVESMaps},   and the compatibility of the formation of $A^{\delta}_{\chi,\et}$ and $\omega_{E}^{\chi}$ with the character $\chi$.   The commutation of the lower  diagram is a direct consequence of the constructions and  Corollary \ref{CorAcompatibleV}.  

 For part (4) we follow the same arguments of \cite{AISOvShimura2015}:    the finite slope theory (cf. \cite{UrbanEigenvar,buzzard_2007}) implies that there is an affinoid open subspace $\n{V}'\subset\n{V}$ containing $\kappa$ such that the  $(\leq h)$-part of the sequence (\ref{eqESsequence}) restricted to $\n{V}'$ is a sequence of finite free $\bb{C}_p \widehat{\otimes}_{\bb{Q}_p}\s{O}(\n{V}')$-modules.    Moreover,  by the classicality Theorems  \ref{TheoClassicityModforms} and \ref{TheoClassicityModularSymbols}, and the classical Eichler-Shimura decomposition (Theorem \ref{TheoESDecompositionClassical}),  we can take $\n{V}'$ such that the $(\leq h)$-slope of the sequence (\ref{eqESsequence}) is short and exact.

Finally,    we briefly  sketch  the argument for part (5).  Let $\n{V}'$ be as   in  (4),  let $R=\s{O}(\n{V}')$,  and consider the short exact sequence of the $(\leq h)$-slope of   (\ref{eqESsequence}).   Taking basis  and tensoring with the Tate twist $R(1-\chi_2)$ we are left to prove that the localization by $b$ of $H^1(G_{\bb{Q}_p},  \bb{C}_p \widehat{\otimes} R(\chi_1-\chi_2+1))$ vanishes.  By almost \'etale descent one has 
\begin{equation}
\label{eqExtensionSen}
H^1(G_{\bb{Q}_p},  \bb{C}_p \widehat{\otimes} R(\chi_1-\chi_2+1))=H^1(\Gal(\bb{Q}_p^{\cyc}/\bb{Q}_p),  \bb{Q}_p^{\cyc}\widehat{\otimes}_{\bb{Q}_p}R(\chi_1-\chi_2+1)). 
\end{equation}
We identify $\Gal(\bb{Q}_p^{\cyc}/\bb{Q}_p)$ with $\bb{Z}_p^\times$ via $\chi_{\cyc}$.   By Sen theory,  to show that  (\ref{eqExtensionSen}) is of $b$-torsion it is enough to prove that $H^1(\Lie \bb{Z}_p^\times,  R(\chi_1-\chi_2+1))_b=0$,  but this is clear as $H^1(\Lie \bb{Z}_p^\times,  R(\chi_1-\chi_2+1))\cong  R/ b R $.
\end{proof}

\subsubsection{Previous works in the literature}

In this short paragraph we discuss some previous works and their connection with Theorem \ref{TheoMain}.  Our main result is the complement of the work of Andreatta-Iovita-Stevens; they have constructed the map $\ES_{D}$ from $H^1_{\proet}(Y_{\bb{C}_p}, D^{\delta}_{\chi,\et})\widehat{\otimes }\bb{C}_p$ to the space of overconvergent modular forms of weight $-w_0(\chi)$, cf.   \cite[Theorem  6.1]{AISOvShimura2015}. Our theorem constructs a new map  from the overconvergent $H^1$-cohomology with supports of higher Coleman theory \cite{BoxerPilloniHigher2020}, to the space of modular symbols defined by the distributions. In addition, we have also discussed the dual picture with the principal series, and with the  pro\'etale cohomology with compact support instead.

On the other hand, the first work in the subject which uses the perfectoid modular curve to construct the $\ES_{D}$ map goes back to Chojecki-Hansen-Johansson. Aditionally, they constructed the map for Shimura curves, and they have translated an analogous of Theorem  \ref{TheoMain} in terms of the eigencurve, see  \cite[Theorem 5.14]{ChojeckiOvShimuraCurves2017}.

The work of Sean Howe \cite{SeanHoweOverconvergent} studies   natural pairings between some local cohomologies attached to the flag variety $\Fl= \bb{P}^1_{\bb{Q}_p}$ and overconvergent modular forms, these  take values in the  locally analytic vectors of the completed cohomology of the modular curve.  The local cohomologies are nothing but the cohomology with supports $\varprojlim_{\epsilon} H^1_{\overline{U_w(\epsilon)}}(\bb{P}^1_{\bb{Q}_p}, \s{L}(\chi))$ or the overconvergent cohomology $\varinjlim_{\epsilon} H^1(U_w(\epsilon), \s{L}(\chi))$, see  Lemma 4.3.1 and Remark 1.2.12 of  \cite{SeanHoweOverconvergent} (in the notation of \textit{loc. cit.} one has $0=[0:1]$, which is represented by $1\in \GL_2$). It is expected that these  pairings provide a more geometric interpretation of the $\ES_{A}^{\vee}$ map of Theorem \ref{TheoMain}, namely, they are closely  related to the description the of completed cohomology of Lue Pan that we briefly explain down below. 
 
 In his recent work \cite{pan2020locally}, Lue Pan gave an exhausting description of the $\chi$-isotypoic part of the locally analytic vectors of the completed cohomology   for the action of the Borel algebra  $\mathrm{Lie} \bbf{B}$,  see \cite[Theorem 1.0.1]{pan2020locally}. His method uses  a new tool in $p$-adic Hodge theory which is the geometric Sen operator, then, using the dictionary between representation theory  over $\Fl $ and pro\'etale sheaves over the modular curve provided by $\pi_{\HT}$, he showed that the completed cohomology can be decomposed in terms of overconvergent modular forms. The intersection  between locally analytic vectors of completed cohomology and Theorem \ref{TheoMain} lies in the fact that the cohomology group $H^1_{\proet}(Y_{\bb{C}_p}, A^{\delta}_{\chi,\et})$ is a subspace of the locally analytic completed cohomology, consisting on those $\delta$-analytic cohomology  classes admitting an action of $\Iw_n$, such that $\bbf{B}(\bb{Z}_p)\cap \Iw_n$ acts via $-\chi$. Finally, the maps $\ES_A^{\vee}$ and $\ES_{A}$ are instances  of the spaces $M_{\mu,1}$ and $M_{\mu,w}$ appearing in \cite[Theorem 5.4.2]{pan2020locally}.

 \subsubsection{The pairings} 
We end this section with the compatibility of the overconvergent Eichler-Shimura maps and Poincar\'e and Serre duality.  Let $\epsilon \geq \delta \geq n$,  let  $(R,R^+)$ and $\chi$ be as in previous sections.  By definition there is a natural pairing between the $\delta$-principal series and distributions 
 \[
 A^{\delta}_{\chi} \times D^{\delta}_{\chi}\rightarrow R.
 \]
It is easy to see that it induces  a Poincar\'e  pairing 
\begin{equation*}
\label{EQYoneda}
\langle - , - \rangle_P : H^1_{\proet,c}(Y_{\bb{C}_p},  D^{\delta}_{\chi,\et}(1))\times H^1_{\proet}(Y_{\bb{C}_p},  A^{\delta}_{\chi,\et} )\rightarrow H^2_{\proet,c}(Y_{\bb{C}_p},  R(1)) \xrightarrow{\Tr_P} R
\end{equation*}
where  the first arrow is a Yoneda pairing,  and the last arrow is induced by  the Poincar\'e trace $H^1_{\proet,c}(Y_{\bb{C}_p},  \bb{Z}_p(1))\xrightarrow{\Tr_P} \bb{Z}_p$.

On the other hand, in \cite{BoxerPilloniHigher2020} the authors have  defined  overconvergent  Serre pairings in families 
\begin{equation*}
\label{SerrePairing}
\langle -, -\rangle_S: H^1_{w,c}(X_{\bb{C}_p}, \omega_E^{-\chi}(-D))_{\epsilon}\times  H^{0}_{w}(X_{\bb{C}_p},\omega_E^{\chi+\alpha})_{\epsilon} \rightarrow \bb{C}_p\widehat{\otimes} R
\end{equation*}
compatible with the classical Serre pairings.  The  pairings are constructed by taking the Yoneda's product 
\[
\cup : H^1_{w,c}(X_{\bb{C}_p},\omega_E^{-\chi}(-D))_{\epsilon} \times H^{0}_{w}(X_{\bb{C}_p},\omega_E^{\chi+\alpha})_{\epsilon}  \xrightarrow{\cup} H^1_{w,c}(X_{\bb{C}_p},  \omega_E^{\alpha}(-D)\widehat{\otimes}R)_{\epsilon}\xrightarrow{KS} H^1_{w,c}(X_{\bb{C}_p}   \Omega_X^1\widehat{\otimes } R)_{\epsilon}
\]
and composing with the  Serre trace map of $X$
\[
H^1_{w,c}(X_{\bb{C}_p},\Omega_X^1\widehat{\otimes }R )_{\epsilon}  \xrightarrow{\Cor} H^1_{\an}(X_{\bb{C}_p}, \Omega^1_{X}\widehat{\otimes}R) \xrightarrow{\Tr_S} \bb{C}_p\widehat{\otimes} R. 
\]
\begin{theo}
\label{TheoPairingOVES}
Keep the notation of Theorem \ref{TheoMain}.  The following hold

\begin{enumerate}
\item The Poincar\'e and Serre pairings of overconvergent cohomologies are compatible with the good normalizations of the $U_p$-operators (Definition \ref{DefiGoodNormalizations}).  Moreover, they are  compatible with the classical  Eichler-Shimura maps of Theorem  \ref{TheoESDecompositionClassical}.

 \item Let $\n{W}_T$ be the weight space of $T= \bbf{T}(\bb{Z}_p)$,  let  $\n{V}\subset \n{W}_T$ be an open affinoid and let  $\chi=\chi^{\un}_{\n{V}}$ be the universal character of $\n{V}$.  Let $\kappa=(k_1,k_2)\in \n{V}$ be a classical weight and   fix  $h<k_1-k_2+1$.   There exists an open affinoid  $\n{V}'\subset \n{V}$ containing $\kappa$ such that  we have perfect pairings of finite free $\bb{C}_p\widehat{\otimes} \s{O}(\n{V}')$-modules
\[
\langle -,- \rangle_P:  H^1_{\proet,c}(Y_{\bb{C}_p},  D^{\delta}_{\chi,\et}(1))^{\leq h}\times H^1_{\proet}(Y_{\bb{C}_p},   A^{\delta}_{\chi,\et} )^{\leq h} \rightarrow  \s{O}(\n{V}')
\]
and 
\begin{gather*}
\langle -, -\rangle_S: H^1_{w,c}(X_{\bb{C}_p}, \omega_E^{-\chi}(-D))^{\leq h}_{\epsilon}\times  H^{0}_{w}(X_{\bb{C}_p},\omega_E^{\chi+\alpha})^{\leq h}_{\epsilon} \rightarrow \bb{C}_p\widehat{\otimes} \s{O}(\n{V}') \\ 
\langle -, -\rangle_S: H^1_{w,c}(X_{\bb{C}_p}, \omega_E^{w_0(\chi)})^{\leq h}_{\epsilon}\times  H^{0}_{w}(X_{\bb{C}_p},\omega_E^{-w_0(\chi)+\alpha}(-D))^{\leq h}_{\epsilon} \rightarrow  \bb{C}_p\widehat{\otimes} \s{O}(\n{V}') ,
\end{gather*}
compatible with the  overconvergent Eichler-Shimura maps.
\end{enumerate}

\proof
The Hecke operators are compatible with the pairings by their definition via finite flat correspondances,    see Definitions \ref{DefUpOverconvergentModular} and \ref{DefUpModularSymbols}.  

In the following we forget the Galois action.  Let  $\lambda:  X_{\bb{C}_p, \proket} \rightarrow X_{\bb{C}_p, \an}$ be the projection of sites.    We have a commutative diagram of Yoneda's products 
\[
\begin{tikzcd}[column sep= 10pt]
H^1_{1,c}( X_{\bb{C}_p},  R\lambda_{*} (\omega_E^{w_0(\chi)} \widehat{\otimes} \widehat{\s{O}}_X) )_{\epsilon} \times H^1_{1}(X_{\bb{C}_p}, R\lambda_{*}( \omega_E^{-w_0(\kappa)} \widehat{\otimes}\widehat{\s{I}}))_{\epsilon} \ar[r] \ar[d, shift right = 1.5 cm] & H^2_{1,c}(X_{\bb{C}_p},  R\lambda_{*}  (R\widehat{\otimes}  \widehat{\s{I}}) )_{\epsilon} \ar[d, "\Cor"] \\ 
H^1_{\proket}(X_{\bb{C}_p}, A^{\delta}_{\chi,\et}\widehat{\otimes}\widehat{\s{O}}_X) \times H^1_{\proket}(X_{\bb{C}_p}, D^{\delta}_{\chi,\et}\widehat{\otimes}\widehat{\s{I}}) \ar[r]  \ar[d , shift right = 1.5 cm]  \ar[u, shift right =1.5 cm] & H^2_{\proket}(X_{\bb{C}_p},  R\widehat{\otimes}\widehat{\s{I}}) \\ 
H^1_{w_0}(X_{\bb{C}_p},  R\lambda_{*}( \omega_E^{\chi}\widehat{\otimes} \widehat{\s{O}}_X))_{\epsilon} \times H^1_{w_0,c}(X_{\bb{C}_p}, R\lambda_{*}(\omega_E^{-\chi}\widehat{\otimes} \widehat{\s{I}}))_{\epsilon} \ar[r]  \ar[u,  shift right = 1.5 cm]& H^2_{w_0,c}(X_{\bb{C}_p}, R\lambda_{*}( R\widehat{\otimes} \widehat{\s{I}}))_{\epsilon} \ar[u, "\Cor"']
\end{tikzcd}
\]
 On the other hand, we also have compatible pairings provided by the Faltings extension, cf. \cite[Corollary 6.14]{ScholzeHodgeTheory2013}
\[
\begin{tikzcd}
H^1_{w,c}(X_{\bb{C}_p}, \omega_E^{-\chi}(-D))_{\epsilon} \times H^{0}_{w}(X_{\bb{C}_p},\omega_E^{\chi+\alpha})_{\epsilon} \ar[r, "\Cor\circ \cup"]  \ar[d, shift right = 1.5 cm] & H^1_{\an}(X_{\bb{C}_p},R\widehat{\otimes}  \Omega^1_X)  \ar[d, "FE"] \\ 
H^1_{w_0,c}(X_{\bb{C}_p}, R\lambda_{*}(\omega_E^{-\chi}\widehat{\otimes} \widehat{\s{I}}))_{\epsilon} \times H^1_{w_0}(X_{\bb{C}_p},  R\lambda_{*}( \omega_E^{\chi}\widehat{\otimes} \widehat{\s{O}}_X))_{\epsilon}\ar[r, "\Cor\circ \cup"] \ar[u,  shift right = 1.5 cm]  & H^2_{\proket}(X_{\bb{C}_p}, R\widehat{\otimes } \widehat{\s{I}}).
\end{tikzcd}
\]
The compatibility of Poincar\'e and Serre traces (\cite[Theorem 4.4.1 (4)]{LanLiuZhuRhamComparison2019})  implies part (1).    Part (2)  follows the same lines of the proof of Theorem \ref{TheoMain} using the fact that  the pairings are perfect for the classical Eichler-Shimura decomposition. 
\endproof
\end{theo}


\bibliographystyle{alpha}
\bibliography{ESetale}

\end{document}